\def\blfootnote{\gdef\@thefnmark{}\@footnotetext}
\newcommand{\R}{\mathbb R}
\newcommand{\C}{\mathbb C}
\renewcommand{\P}{\mathbb P}
\newcommand{\mc}{\mathcal}
\renewcommand{\phi}{\varphi}
\newcommand{\n}{\nabla}
\newcommand{\pd}{\partial}
\newcommand{\tr}{{\rm tr}}
\renewcommand{\ge}{\geqslant}
\renewcommand{\le}{\leqslant}
\renewcommand{\bar}[1]{\mkern 0.5mu\overline{\mkern-0.5mu#1\mkern-0.5mu}\mkern 0.5mu}
\theoremstyle{plain}
\newtheorem{theorem}{Theorem}[section]
\newtheorem*{theorem*}{Theorem}
\newtheorem{lemma}[theorem]{Lemma}
\newtheorem{corollary}[theorem]{Corollary}
\newtheorem{proposition}[theorem]{Proposition}
\theoremstyle{definition}
\newtheorem{remark}[theorem]{Remark}
\newtheorem{definition}[theorem]{Definition}
\newtheorem{example}[theorem]{Example}
\begin{document}

\date{}
\date{}

\title[The HCF on manifolds with non-negative Griffiths curvature]{The Hermitian curvature flow on manifolds with non-negative Griffiths curvature}
\author{Yury Ustinovskiy}

\address{Department of Mathematics, Princeton University}
\email{yuryu@math.princeton.edu}

\begin{abstract}
In this paper we study a particular version of the \emph{Hermitian curvature flow} (HCF) over a compact complex Hermitian manifold $(M,g,J)$. We prove that if the initial metric has Griffiths positive (non-negative) Chern curvature $\Omega$, then this property is preserved along the flow. On a manifold with Griffiths non-negative Chern curvature the HCF has nice regularization properties, in particular, for any $t>0$ the zero set of $\Omega(\xi,\bar\xi,\eta,\bar\eta)$ becomes invariant under certain \emph{torsion-twisted} parallel transport.
\end{abstract}
\maketitle

\blfootnote{2010 Mathematics Subject Classification: Primary 53C44, Secondary 53C55\\ keywords: \emph{Hermitian curvature flow}, \emph{Griffiths positivity}, \emph{strong maximum principle}}

\section*{Introduction}
The K\"ahler-Ricci flow has been demonstrated to be a powerful tool in various geometrical classification problems in K\"ahler geometry, see, e.g.,~\cite{gu-09, mo-88}. However, on a general Hermitian manifold Ricci flow does not preserve Hermitian condition. This observation motivated Streets and Tian~\cite{st-ti-11} to introduce a family of \emph{Hermitian curvature flows} (HCF) on an arbitrary Hermitian manifold $(M,g,J)$.
One uses Chern connection $\n$ and its curvature $\Omega$ to define the HCF.  Under this flow metric is evolved according to the equation
\begin{equation}\label{eq:HCF_intro}
\frac{dg}{dt}=-S-Q,
\end{equation}
where $S_{k\bar l}=g^{i\bar j}\Omega_{i\bar jk\bar l}$ is the \emph{second Chern-Ricci curvature} of Chern connection and $Q$ is an arbitrary quadratic torsion term of type (1,1). In~\cite{st-ti-11} the authors prove short time existence for this flow and derive basic long time blowup and regularity results.

In this paper we study flow~\eqref{eq:HCF_intro} with the specific torsion term $Q_{i\bar j}=\frac{1}{2}g^{m\bar n}g^{p\bar s}T_{pm\bar j}T_{\bar s\bar n i}$ and in what follows this flow is referred to as the HCF. It turns out that for this particular choice of $Q$ (with a special choice of connection on the bundle of curvature-type tensors) the evolution equation for curvature is significantly simplified. Then the evolution equation for $\Omega$ becomes very similar to the curvature evolution under the standard Ricci flow (Proposition~\ref{prop:dt_Omega_torsion_twisted}). This observation allows us to mimic in the case of the HCF the proof of the positivity preservation properties of K\"ahler-Ricci flow. Namely, we adopt arguments of Bando~\cite{ba-84} and Mok~\cite{mo-88}, who proved that under  K\"ahler-Ricci flow the positivity of the bisectional holomorphic curvature is preserved, and prove that the HCF preserves Griffiths positivity (non-negativity) of the Chern curvature.

\begin{theorem}
Let $g(t), t\in[0,\tau)$ be the solution to the HCF on a compact complex Hermitian manifold $(M,g_0,J)$. Assume that the Chern curvature $\Omega^{g_0}$ at the initial moment $t=0$ is Griffiths non-negative (resp. positive), i.e., for $\xi,\eta\in T^{1,0}M$:
\[
\Omega^{g_0}(\xi,\bar\xi,\eta,\bar\eta)\ge 0\quad (\mbox{resp. }>0).
\]
Then for $t\in[0,\tau)$ the Chern curvature $\Omega(t)=\Omega^{g(t)}$ remains Griffiths non-negative (resp. positive).

If, moreover, the Chern curvature $\Omega^{g_0}$ is Griffiths positive at least at one point $x\in M$, then for any $t\in(0;\tau)$ the Chern curvature is Griffiths positive everywhere on $M$.
\end{theorem}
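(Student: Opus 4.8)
The plan is to run Hamilton's maximum principle for sections of a vector bundle --- in the form used by Bando~\cite{ba-84} and Mok~\cite{mo-88} for the K\"ahler-Ricci flow --- applied to the evolution equation of Proposition~\ref{prop:dt_Omega_torsion_twisted}. Write that equation schematically as $\n_t\Omega=\Delta\Omega+\mc R(\Omega,T)$, where $\n_t$ and $\Delta$ are the time derivative and rough Laplacian associated with the torsion-twisted connection on the bundle $\mc C$ of Chern-curvature-type tensors, and $\mc R$ is an algebraic expression quadratic in $\Omega$ and in the torsion $T$. Inside each fibre of $\mc C$ the set $\mc K$ of Griffiths non-negative tensors is a closed convex cone, and because the torsion-twisted connection is metric and $J$-compatible its holonomy acts unitarily and therefore preserves $\mc K$; hence $\mc K$ is a genuine parallel, invariant subset of $\mc C$. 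By the maximum principle, to conclude $\Omega(t)\in\mc K$ for all $t$ it suffices to know $\Omega(0)\in\mc K$ together with the purely algebraic fact that the reaction ODE $\tfrac{d}{dt}\Omega=\mc R(\Omega,T)$ preserves $\mc K$, i.e. that at each boundary tensor of $\mc K$ the field $\mc R$ is inward-pointing or tangent.

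Verifying this algebraic fact is the core of the argument. Let $\Omega\in\pd\mc K$, so there are nonzero $\xi,\eta\in T^{1,0}M$ with $\Omega(\xi,\bar\xi,\eta,\bar\eta)=0$; then $\eta$ minimizes the non-negative Hermitian form $\zeta\mapsto\Omega(\xi,\bar\xi,\zeta,\bar\zeta)$, so in particular it lies in the kernel of that form. Following Bando's device I would differentiate the constraint defining the null direction to kill the first-order contributions, and then show that $\mc R(\Omega,T)(\xi,\bar\xi,\eta,\bar\eta)$ reorganizes into a manifestly non-negative combination --- sums of the shape $\sum|\Omega(\xi,\bar\xi,\cdot,\cdot)|^2$ and contractions $\sum\Omega(\xi,\bar\xi,e_a,\bar e_b)\,\Omega(e_b,\bar e_a,\eta,\bar\eta)$ of two non-negative forms one of which annihilates $\eta$ --- exactly as in the bisectional-curvature case. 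The one genuinely new point is that $\mc R$ also carries terms built from the torsion $T$; the whole reason for the special choice of $Q$ and of the twisted connection is that these extra terms either vanish along the null direction or again assemble into non-negative squares, and checking this cleanly is the step I expect to demand the most careful bookkeeping.

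For the positive case, the same scheme applies with $\mc K$ replaced by the open cone of Griffiths positive tensors (alternatively, a short continuity/connectedness argument on $[0,\tau)$): once the reaction ODE preserves the closed cone it cannot drive an interior point to $\pd\mc K$ in finite time, so Griffiths positivity is preserved as well. Finally, for the last assertion I would invoke the strong maximum principle --- Hamilton's tensorial version, adapted to the torsion-twisted connection as Mok does in the K\"ahler setting. The equality case of the second step shows that for $t>0$ the null set $\mc N_t=\{(\xi,\eta)\in T^{1,0}M\times T^{1,0}M:\Omega(t)(\xi,\bar\xi,\eta,\bar\eta)=0\}$ is invariant under torsion-twisted parallel transport along curves in $M$, so on the connected manifold $M$ it is trivial at one point iff it is trivial at every point. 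Since $\Omega^{g_0}$ is Griffiths positive at $x$ and Griffiths positivity at a point is an open condition on the curvature tensor, $\Omega(t)$ remains Griffiths positive at $x$ for $t$ in a small interval $[0,\varepsilon)$; hence $\mc N_t$ is trivial at $x$, therefore trivial everywhere, so $\Omega(t)$ is Griffiths positive on all of $M$ for $t\in(0,\varepsilon)$, and by the positivity-preservation already established this persists for all $t\in(0,\tau)$. The main obstacle throughout is the algebraic computation in the second step; the strong-maximum-principle part is a fairly routine transcription of Mok's argument.
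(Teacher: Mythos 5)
Your proposal follows the same overall route as the paper: write the evolution equation for $\Omega$ using the torsion-twisted Laplacian $\Delta^T$ (Proposition~\ref{prop:dt_Omega_torsion_twisted}), show that at a null pair $(\xi,\eta)$ the reaction terms are non-negative via the first- and second-variation lemmas (Proposition~\ref{prop:vanish}, Corollary~\ref{cor:var_fiber}) together with $Q_2(\n T)\ge 0$, and then invoke a maximum principle; for the spreading statement, invoke the Brendle--Schoen/Wilking strong maximum principle via the torsion-twisted parallel transport. The one technical variation is that the paper implements the weak maximum principle directly as a Bando barrier argument with $u^\epsilon=\Omega+\epsilon_0 e^{Kt}\,g\otimes g$, rather than by citing Hamilton's tensorial maximum principle abstractly, but these are interchangeable once the fibrewise inequalities are established. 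One error worth fixing: the torsion-twisted connections $\n^1,\n^2$ are \emph{not} metric --- neither preserves $g$ individually, only the duality $X\!\cdot\!g(Y,Z)=g(\n^1_XY,Z)+g(Y,\n^2_XZ)$ holds --- so their holonomy is not unitary as you assert. The Griffiths cone is nonetheless parallel under $\n^T$, but for a different reason: the cone is cut out by the metric-free positivity condition on decomposable pairs $\xi\otimes\bar\xi\otimes\eta\otimes\bar\eta$, and $\n^T=\n^1\otimes\bar{\n^1}\otimes\n^2\otimes\bar{\n^2}$ transports $\xi$ and $\bar\xi$ (resp.\ $\eta$ and $\bar\eta$) by mutually conjugate linear maps, so positivity on such pairs is preserved by any such $GL$-holonomy. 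Finally, ``reaction ODE preserving the closed cone implies interior points do not reach the boundary'' is not a valid deduction at the PDE level; the paper handles the strictly positive case by running the same barrier argument, and your strong-maximum-principle step (which you need anyway) gives a cleaner independent route.
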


We also use the argument of Brendle and Schoen~\cite{br-sc-08} to prove some regularization properties of the HCF on the manifolds with Griffiths non-negative Chern curvature. Namely, we prove that for any $t>0$ the Chern curvature tensor $\Omega(t)=\Omega^{g(t)}$ of the solution $g(t)$ to the HCF acquires additional invariance properties, which are not a priori satisfied  at the initial moment.

\begin{theorem}
Let $g(t), t\in[0,\tau)$ be a solution to the HCF on a compact complex Hermitian manifold $(M,g_0,J)$. Assume that the Chern curvature $\Omega^{g_0}$ at the initial moment $t=0$ is Griffiths non-negative. Then for any $t_0\in(0,\tau)$ the set
\[
Z=\{(\xi,\eta) |\ \xi,\eta\in T^{1,0}M,\ \Omega^{g(t_0)}(\xi,\bar\xi,\eta,\bar\eta)=0\}\subset T^{1,0}M\oplus T^{1,0}M
\]
is invariant under the torsion-twisted parallel transport
\begin{equation}
\begin{cases}
\n_{\gamma'}\xi=T(\gamma',\xi), \\
\n_{\gamma'}\eta=-g(\eta,T(\gamma',\cdot))^\#,
\end{cases}
\end{equation}
where $\#\colon T^*M\to TM$ is the isomorphism induced by $g(t_0)$.
\end{theorem}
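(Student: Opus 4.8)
The plan is to run the strong–maximum–principle argument of Hamilton, in the form employed by Brendle and Schoen~\cite{br-sc-08}, for the reaction–diffusion equation satisfied by the Chern curvature. First I would set the stage algebraically: let $E\to M$ be the bundle in which $\Omega$ lives (Chern–curvature–type tensors), equipped with the \emph{torsion-twisted connection} $D$ for which, by Proposition~\ref{prop:dt_Omega_torsion_twisted}, the evolution reads $D_t\Omega=\Delta_D\Omega+Q(\Omega)$, with $\Delta_D$ the rough Laplacian of $D$ and $Q$ the quadratic reaction term. Griffiths non-negativity cuts out a closed convex cone $\mc C\subset E$ fibrewise; the point of the twist in $D$ is precisely that $\mc C$ is invariant under $D$-parallel transport, and the computation underlying the preservation theorem above shows in addition that the ODE $\dot\Omega=Q(\Omega)$ preserves $\mc C$, i.e.\ $Q(\Omega)$ lies in the tangent cone $T_\Omega\mc C$ whenever $\Omega\in\pd\mc C$. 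Together with the preservation theorem (which guarantees $\Omega(t)\in\mc C$ for all $t$), these are exactly the inputs required by the strong maximum principle for systems.

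Next I would carry out the maximum–principle step. Fix $t_0\in(0,\tau)$, a point $x_0\in M$, and a null pair $(\xi_0,\eta_0)$, so that $\Omega^{g(t_0)}(\xi_0,\bar\xi_0,\eta_0,\bar\eta_0)=0$; the goal is to show that transporting $(\xi_0,\eta_0)$ by the displayed ODE keeps the pair in $Z$. Given a curve $\gamma$ issuing from $x_0$, extend $(\xi_0,\eta_0)$ along $\gamma$ by that ODE, and (carrying this out along, say, all geodesics through $x_0$) to a local frame; set $u(x)=\Omega^{g(t_0)}\bigl(\xi(x),\bar\xi(x),\eta(x),\bar\eta(x)\bigr)\ge 0$, so $u(x_0)=0$. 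The two lines of the ODE are designed so that the test tensor $(\xi\otimes\bar\xi)\otimes(\eta\otimes\bar\eta)$, contracted against sections of $E$, is $D$-parallel along $\gamma$; pairing it with $D_t\Omega=\Delta_D\Omega+Q(\Omega)$ and using metric-compatibility of $D$, one finds that $u$ satisfies near $x_0$ at time $t_0$ a scalar differential inequality of the form $(\pd_t-\Delta)u\le C\,u+\langle X,\n u\rangle$, the convexity of $\mc C$ and the tangency of $Q$ along $\pd\mc C$ being what bounds the reaction contribution. Bony's version of the strong maximum principle then forces $u\equiv0$ along $\gamma$, i.e.\ $(\xi(s),\eta(s))\in Z$; more precisely one first gets that the minimal face of $\mc C$ containing $\Omega(x,t_0)$ is $D$-parallel, and the statement about $Z$ is the specialization to rank-one supporting functionals.

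The hard part will be the bookkeeping that underpins the first paragraph together with the translation in the second: pinning down the connection $D$ on $E$, verifying that the Griffiths cone is genuinely $D$-parallel (this is where the precise coefficient $\tfrac12$ in the torsion term $Q_{i\bar j}=\tfrac12 g^{m\bar n}g^{p\bar s}T_{pm\bar j}T_{\bar s\bar n i}$ and the matching twist on $E$ are used), and then checking that $D$-parallelness of the test tensor is equivalent to the displayed system. The asymmetry between the $\xi$- and $\eta$-equations — an honest torsion twist in the first, and a twist through the $g(t_0)$-musical isomorphism with an opposite sign in the second — is a feature of the non-Kähler setting: the two $(1,1)$-pairs of the Chern curvature enter the HCF differently (only the first is traced in $S_{k\bar l}=g^{i\bar j}\Omega_{i\bar j k\bar l}$), so $D$ acts on the corresponding slots by different rules, and conjugate-linearity in the barred entries converts this into the sign and the $\#$ above. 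A secondary technical point is that $\mc C$ is not the cone of non-negative operators, so the Hamilton/Brendle–Schoen maximum principle must be applied to the family of supporting functionals $\Omega\mapsto\Omega(\xi,\bar\xi,\eta,\bar\eta)$ rather than to a least eigenvalue; one must confirm that their hypotheses (closedness, convexity, $D$-invariance of $\mc C$, and inward-pointing $Q$) survive in this Hermitian setting — which is exactly what the preservation theorem delivers.
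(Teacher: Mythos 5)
Your overall architecture matches the paper: introduce the torsion-twisted connection, use the preservation theorem for non-negativity, and run a Brendle--Schoen/Wilking-style strong maximum principle to conclude that the zero locus is preserved by the corresponding parallel transport. The preliminary cone-theoretic remarks (that $D$-parallel transport preserves the Griffiths cone, and that $F_2(\Omega)$ points into the tangent cone along the boundary) are correct and do follow from the preservation argument of Section~\ref{sec:main}.

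However, there is a genuine gap in the reduction in your second paragraph. You define $u(x)=\Omega^{g(t_0)}(\xi(x),\bar\xi(x),\eta(x),\bar\eta(x))$ by propagating $(\xi_0,\eta_0)$ radially by~\eqref{eq:parallel_transport}, and you claim a scalar differential inequality on $M$ of the form $(\pd_t-\Delta)u\le C\,u+\langle X,\n u\rangle$. The obstruction is the reaction term: tangency of $F_2$ to the cone gives $F_2(\Omega)(\xi,\bar\xi,\eta,\bar\eta)\ge 0$ \emph{only} at pairs where $u=0$; away from a zero, nothing like $F_2(\Omega)(\xi,\bar\xi,\eta,\bar\eta)\ge -C\,u$ follows from convexity or tangency, and such a bound is in general false. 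The correct quantitative control is Proposition~\ref{prop:var_fiber}, which bounds the quadratic term from below by the infimum of the \emph{second variation of $\Omega$ in the fibre directions}, i.e.\ by $K\cdot\inf_{|\nu|^2+|\zeta|^2\le 1}\delta^2\Omega(\nu,\zeta)$, and not by the scalar $u$. This forces the argument onto the total space $V=\{(x,\xi,\eta)\}$ over $M\times(0,\tau)$ where $\Omega$ is a non-negative scalar function and one must carry the fibre-Hessian term explicitly in the elliptic inequality, exactly as in \cite[Prop.\,4]{br-sc-08} and in the paper's inequality~\eqref{eq:br_sch_ineq_crucial}. With $u$ built from a fixed radial frame on $M$, those fibre-Hessian contributions never appear in $\Delta u$, so the scalar parabolic inequality you invoke is not available; invoking Bony at that stage is circular, since the inequality you need would already presuppose that the frame stays in $Z$.

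The fix is to replace the $M$-based scalar reduction by the $V$-based one: treat $\Omega$ as a function on $V$, use the torsion-twisted horizontal distribution coming from $\n^1\oplus\n^2$ to produce the vector fields $\widetilde v_i$, convert $\Delta^T\Omega$ into $\sum_i\widetilde v_i(\widetilde v_i\Omega)$ plus first-order terms, bound $F_2$ via Proposition~\ref{prop:var_fiber}, absorb $F_1(\Omega)$, $\frac{d}{dt}\Omega$ and $\widetilde w_i\Omega$ into a $|{\rm grad}\,\Omega|$ term, and finally apply \cite[Prop.\,4]{br-sc-08}. The conclusion (invariance of $Z\subset V$ under flows of the $\widetilde v_i$) is then precisely the invariance of $Z$ under the transport~\eqref{eq:parallel_transport}. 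Your third paragraph correctly identifies most of the bookkeeping that is needed; but the key analytic ingredient you are missing is the fibre second-variation inequality of Proposition~\ref{prop:var_fiber}, which is what makes the reaction term compatible with a strong maximum principle in the first place.
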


We expect, that preservation of Griffiths positivity will imply strong existence results for the HCF. Together with the regularization properties of the HCF this might allow to prove certain uniformization theorems for Hermitian manifolds with non-negative Griffiths curvature. For example, with the use of the strong maximum principle for Griffiths positivity we are able to infer the following result.
\begin{proposition}
Let $(M,g_0,J)$ be a compact complex $n$-dimensional Hermitian manifold such that
\begin{enumerate}
\item its Chern curvature $\Omega$ is Griffiths non-negative;
\item $\Omega$ is strictly positive at some point $x_0\in M$.
\end{enumerate}
Then $M$ is biholomorphic to the projective space $\P^n$.
\end{proposition}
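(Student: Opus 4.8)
The plan is to combine the two preceding theorems with a version of Mok's uniformization argument for the Kähler–Ricci flow (and the earlier structure theory of Siu–Yau and Mori). First I would run the HCF starting from $g_0$; by the first theorem, the second hypothesis propagates so that for every $t\in(0,\tau)$ the evolved metric $g(t)$ has Griffiths \emph{positive} Chern curvature everywhere on $M$. Thus it suffices to prove the proposition under the stronger assumption that $\Omega$ itself is Griffiths positive everywhere (replace $g_0$ by some $g(t_0)$). In particular $M$ then has positive holomorphic bisectional curvature in the Chern sense, and since Griffiths positivity implies in particular positivity of $\Omega(\xi,\bar\xi,\xi,\bar\xi)$, the underlying smooth manifold carries a metric of positive holomorphic sectional curvature, hence is simply connected (Tsukamoto / standard Bonnet–Myers-type argument on the diameter) and has $b_2=1$; moreover Griffiths positivity forces $-K_M$ to be positive in an appropriate sense, so $M$ is Fano.

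Next I would use the Hermitian analogue of the $\partial\bar\partial$-Bochner argument to conclude that the harmonic $(1,1)$-forms are rigid: a manifold with Griffiths positive Chern curvature has $h^{1,1}=1$, so $H^2(M,\R)$ is one-dimensional and $M$ is projective with Picard number one, with ample generator $L$. The key step is then to identify $M$ with $\P^n$ from the numerical data $b_2=1$ plus positivity. Here I expect to invoke Mori's theorem (solution of the Hartshorne conjecture): a Fano manifold of Picard number one whose tangent bundle is \emph{ample} is $\P^n$. The remaining point is to verify that Griffiths positivity of the Chern curvature of $g(t_0)$ implies that $T^{1,0}M$, as a holomorphic vector bundle, is Griffiths positive in the sense of Hermitian metrics, and that Griffiths positivity of a bundle implies (Nakano/Hartshorne) ampleness of that bundle; the curvature of the Chern connection of $g(t_0)$ on $T^{1,0}M$ is exactly $\Omega$, and the condition $\Omega(\xi,\bar\xi,\eta,\bar\eta)>0$ for all nonzero $\xi,\eta$ is precisely Griffiths positivity of $(T^{1,0}M,g(t_0))$, so this is immediate.

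Thus the skeleton is: (i) flow to make $\Omega>0$ in the Griffiths sense everywhere; (ii) deduce $M$ Fano, simply connected, $b_2=1$; (iii) observe $T^{1,0}M$ is a Griffiths-positive holomorphic vector bundle, hence ample; (iv) apply Mori's characterization of $\P^n$ among Fano manifolds of Picard number one with ample tangent bundle. I expect the main obstacle to be step (ii): on a general \emph{non-Kähler} Hermitian manifold the usual Bochner/vanishing arguments that give simple connectivity, $b_2=1$ and $h^{1,1}=1$ must be replaced by their Chern-connection counterparts, where extra torsion terms appear; one must check that Griffiths positivity (together with the torsion-type term $Q$ built from $T$) still dominates those torsion corrections. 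A clean way around part of this is to note that once $M$ is shown to carry \emph{some} Kähler metric — which follows because a compact complex manifold bimeromorphic to (indeed, once we know it is) $\P^n$ is Kähler, but that is circular — so instead I would appeal to the fact that positivity of holomorphic sectional curvature forces $M$ to be projective (via Mori's theorem on rational curves through every point producing enough effective divisors, or via Yau's conjecture results of Wu–Yau / Heier–Wong in the projective case), and only then invoke Mori. Everything after the topological/algebraic identification is then a direct citation, so the technical heart is transporting the Kähler vanishing theorems to the Chern-connection setting with controlled torsion.
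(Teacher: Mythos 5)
Your core chain --- run the HCF and use the strong maximum principle (Corollary~\ref{cor:smp}) to make $\Omega$ Griffiths positive everywhere at some $t_0>0$, observe that $(T^{1,0}M,g(t_0))$ is then a Griffiths-positive holomorphic bundle and hence ample, and close with Mori's theorem --- is exactly the paper's argument. Where you diverge, and where you get into trouble, is your step (ii): you try to establish that $M$ is Fano, simply connected, with $b_2=1$, $h^{1,1}=1$, and Picard number one, and you rightly worry that the Bochner/vanishing arguments giving these facts in the K\"ahler world pick up uncontrolled torsion terms in a general Hermitian setting. That entire detour is unnecessary. Mori's theorem~\cite{mo-79} only requires that $M$ be a smooth projective variety with ample holomorphic tangent bundle; the Fano and Picard-number-one conditions are consequences of ampleness of $T^{1,0}M$, not hypotheses you must verify separately. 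The only thing you need beyond ampleness of $T^{1,0}M$ is projectivity of $M$, and the paper gets that cheaply: once $\Omega^{g(t_0)}>0$ everywhere, the first Chern--Ricci form $\omega_{i\bar j}=\frac{i}{2\pi}g^{k\bar l}\Omega_{i\bar jk\bar l}$ is a strictly positive $(1,1)$-form representing $c_1(-K_M)$, so $-K_M$ is ample and $M$ is projective by Kodaira embedding. Combined with the Griffiths-positivity $\Rightarrow$ ampleness of $T^{1,0}M$ (Griffiths plus Hartshorne, as you note), this lands directly in Mori's theorem. So there is no circularity to resolve and no Betti or Hodge numbers to compute; the torsion-corrected Bochner machinery you were dreading can be avoided entirely.
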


Besides the HCF, there exist several other natural flows on a general complex Hermitian manifold. For example, in~\cite{gi-11} Gill studies \emph{Chern-Ricci flow}~--- evolution of a Hermitian metric along the first Chern-Ricci curvature $g^{k\bar l}\Omega_{i\bar jk\bar l}$. Like K\"ahler-Ricci flow this flow reduces to a scalar parabolic flow of Monge-Amp\`ere type. In~\cite{to-we-15} Tosatti and Weinkove give characterization of maximal existence time for this flow in all dimensions and analyse convergence in complex dimension two. Liu and Yang in~\cite{li-ya-12} propose to use a mixed linear combination of various types of Ricci curvatures on a Hermitian manifold, including the curvature of the Bismut connection. Recently in~\cite{ya-16} Yang explicitly computed evolution of the standard \emph{round} metric on a Hopf manifold $S^{2n-1}\times S^1$ under the Chern-Ricci flow. This computation demonstrates that Chern-Ricci flow does not preserve Griffiths non-negativity. It would be interesting to investigate whether other natural Hermitian flows on a general complex Hermitian manifold or the HCF with a different choice of $Q$ preserve some positivity properties.

The rest of the paper is organized as follows. In Section~\ref{sec:background} we fix notations and give basic background information about the Chern connection and its curvature. Sections~\ref{sec:variation-formulas} and~\ref{sec:evolution} contain the major part of the computations. We also define the crucial notion of torsion-twisted connection on the space of curvature tensors in Section~\ref{sec:evolution}. In Section~\ref{sec:positivity_properties} positivity results for the zeroes of a Griffiths non-negative curvature tensors are proved. These inequalities are the straightforward generalizations of the corresponding results in the K\"ahler situation. In Section~\ref{sec:main} we prove our main results~--- weak and strong maximum principles for the Griffiths positivity along the HCF. Finally, in Section~\ref{sec:applications} we give some geometric applications these results.
\section*{Acknowledgements}

I am grateful to my advisor Gang Tian for bringing the problems related to the Hermitian curvature flow to my attention and for numerous helpful discussions.

\section{Background}\label{sec:background}
Let $(M,g,J)$ be a compact Hermitian complex manifold, where $J\colon TM\to TM$ is an operator of almost complex structure, $g\colon TM\otimes TM\to \R$ is a $J$-invariant Riemannian metric. We extend $g$ to a $\C$-bilinear form on the complexified tangent bundle $T_\C M:=TM\otimes\C$ which is denoted by the same symbol. Finally let $T_\C M=T^{1,0}M\oplus T^{0,1}M$ be the decomposition into the $\pm i$ eigenspaces of $J$. Any Riemannian metric $g$ defines the corresponding \emph{Hermitian} metric $g(\xi,\bar\eta)$ on $T^{1,0}M$. 

From now on, Greek letters $\xi,\eta,\zeta,\dots$ denote \emph{complex} (1,0)-vectors and vector fields.

\begin{definition}There is a unique connection $\n\colon \Gamma(TM)\to \Gamma(TM\otimes \Lambda^1M)$ such that:
\begin{enumerate}
\item $\n$ preserves the Riemannian metric, i.e., $\n g =0$;
\item $\n$ preserves the complex structure, i.e., $\n J=0$;
\item torsion tensor $T(X,Y):=\n_XY-\n_YX-[X,Y]$ satisfies $T(JX,Y)=T(X,JY)$ for any $X,Y\in TM$. Equivalently $T(\xi,\bar{\eta})=0$ for the natural extension of the torsion tensor to $T_\C M$ and $\xi,\eta\in T^{1,0}M$. In other words, the complex (1,1)-part of $T$ vanishes.
\end{enumerate}
This connection is called the \emph{Chern connection}. 
\end{definition}
Throughout this paper we automatically extend $\n$ and all other connections on $TM$ to $\C$-linear connections on $T_\C M$ and to connections on all associated vector bundles, e.g., $T^*M, \Lambda^k TM$, $\mathrm{End}(TM)$, etc.

\begin{remark}
Alternatively one can define the Chern connection as the unique Hermitian connection on the holomorphic vector bundle $(\mc E,h)=(T^{1,0}M,g)$ compatible with the holomorphic structure (see, e.g.,~\cite[Prop.\,10.2]{ko-no-69}), i.e., $\n\colon \Gamma(T^{1,0}M)\to \Gamma(T^{1,0}M\otimes \Lambda_\C^1M)$ such that:
\begin{enumerate}
\item $\n$ preserves Hermitian metric, $\n g=0$;
\item the (0,1)-part of $\n$ is $\bar\pd$, i.e., $\n_{\bar \xi}\eta=0$ for any local \emph{holomorphic} vector field $\eta\in\Gamma(T^{1,0}M)$ and any (0,1)-vector $\bar \xi\in T^{0,1}M$.
\end{enumerate}
\end{remark}

\begin{definition}
The \emph{Chern curvature} of a Hermitian manifold $(M, g, J)$ is the curvature of $\n$.
\[
\Omega(X,Y)Z:=(\n_X\n_Y-\n_Y\n_X-\n_{[X,Y]})Z,
\]
where $X,Y,Z\in TM$. We also define a tensor with 4 vector arguments by lowering one index.
\[
\Omega(X,Y,Z,W):=g\bigl((\n_X\n_Y-\n_Y\n_X-\n_{[X,Y]})Z, W\bigr).
\]
If there might be an ambiguity of which metric is used to define curvature/torsion tensors, we use the corresponding superscript $\Omega^g$, $T^g$.
\end{definition}

Tensor $\Omega$ satisfies a number of symmetries.
\begin{proposition}\label{prop:curvature_symmetries}
For any real vectors $X,Y,Z,W\in TM$ one has
\begin{itemize}
\item $\Omega(X,Y,Z,W)=-\Omega(Y,X,Z,W)$, $\Omega(X,Y,Z,W)=-\Omega(X,Y,W,Z)$;
\item $\Omega(JX,JY,Z,W)=\Omega(X,Y,Z,W)$, $\Omega(X,Y,JZ,JW)=\Omega(X,Y,Z,W)$.
\end{itemize}
\end{proposition}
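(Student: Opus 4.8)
The four identities fall into two groups: two that are purely formal consequences of $\n$ being a connection preserving $g$, and two that reflect the defining properties $\n J=0$ and the vanishing of the $(1,1)$-part of the torsion. The plan is to dispatch the formal ones first. The identity $\Omega(X,Y,Z,W)=-\Omega(Y,X,Z,W)$ is immediate from the defining formula for $\Omega(X,Y)Z$ together with $[X,Y]=-[Y,X]$. For $\Omega(X,Y,Z,W)=-\Omega(X,Y,W,Z)$ I would use $\n g=0$: apply the operator $XY-YX-[X,Y]$ to the function $g(Z,W)$ (which it annihilates) and expand everything with the Leibniz rule $U\,g(A,B)=g(\n_UA,B)+g(A,\n_UB)$; the mixed single-derivative terms $g(\n_XZ,\n_YW)$ etc.\ cancel in pairs, leaving $0=g(\Omega(X,Y)Z,W)+g(Z,\Omega(X,Y)W)$. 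Both arguments extend verbatim to complex vector fields by $\C$-multilinearity, which is all that is needed later.

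Next I would prove $\Omega(X,Y,JZ,JW)=\Omega(X,Y,Z,W)$. Since $\n J=0$, the endomorphism $\Omega(X,Y)$ commutes with $J$, so $\Omega(X,Y)(JZ)=J\,\Omega(X,Y)Z$; combining this with the $J$-invariance of $g$ gives $g(\Omega(X,Y)(JZ),JW)=g(J\,\Omega(X,Y)Z,JW)=g(\Omega(X,Y)Z,W)$, which is exactly the claim. This leaves the identity $\Omega(JX,JY,Z,W)=\Omega(X,Y,Z,W)$, which I expect to be the main point, as it is the only one using condition (3). It is equivalent to the assertion that $\Omega$, as an $\mathrm{End}(T^{1,0}M)$-valued $2$-form, is of type $(1,1)$, i.e.\ $\Omega(\xi,\eta)=0=\Omega(\bar\xi,\bar\eta)$ for $\xi,\eta\in T^{1,0}M$: granting this, one decomposes arbitrary real $X,Y$ into $(1,0)$ and $(0,1)$ parts, uses $J=i$ on $T^{1,0}M$ and $J=-i$ on $T^{0,1}M$ to get $\Omega(JX,JY)=\Omega(X,Y)$, and pairs with $Z,W$ as before.

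To establish the type $(1,1)$ property I would use the alternative characterization of the Chern connection from the Remark, $\n^{0,1}=\bar\pd$. For the $(0,2)$-part: taking $Z$ a local holomorphic frame of $T^{1,0}M$, one has $\n_{\bar\xi}Z=0$, and since the complex structure is integrable $[\bar\xi,\bar\eta]$ is again of type $(0,1)$, so every term of $\Omega(\bar\xi,\bar\eta)Z=(\n_{\bar\xi}\n_{\bar\eta}-\n_{\bar\eta}\n_{\bar\xi}-\n_{[\bar\xi,\bar\eta]})Z$ vanishes; tensoriality gives $\Omega(\bar\xi,\bar\eta)\equiv 0$. For the $(2,0)$-part I would argue by conjugation: $\n$ is a real operator, so $\overline{\Omega(\xi,\eta)\bar W}=\Omega(\bar\xi,\bar\eta)W=0$, hence $\Omega(\xi,\eta)$ kills $T^{0,1}M$; moreover $\Omega(\xi,\eta)Z\in T^{1,0}M$ and $g$ vanishes identically on $T^{1,0}M\times T^{1,0}M$ (by $J$-invariance, $g(iu,iv)=-g(u,v)=g(u,v)$), while the second antisymmetry gives $g(\Omega(\xi,\eta)Z,\bar W)=-g(\Omega(\xi,\eta)\bar W,Z)=0$; nondegeneracy of $g$ on $T_\C M$ then forces $\Omega(\xi,\eta)Z=0$. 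The single genuinely non-formal input is thus the $\bar\pd$-compatibility of $\n$ (equivalently, the vanishing of the $(1,1)$-part of the torsion) used together with Newlander–Nirenberg integrability; everything else is bookkeeping with $\C$-linearity and the Leibniz identities $\n g=0$, $\n J=0$.
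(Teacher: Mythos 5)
Your proof is correct. The paper states this proposition without proof (it is a standard fact about the Chern connection, deferred implicitly to references such as Kobayashi--Nomizu), so there is no argument in the text to compare against; your derivation --- the two antisymmetries from the definition and from applying $XY-YX-[X,Y]$ to $g(Z,W)$ using $\n g=0$, the $J$-invariance in the last pair from $\n J=0$, and the $J$-invariance in the first pair reduced to the type-$(1,1)$ property of $\Omega$, established via $\bar\pd$-compatibility, integrability of $J$, and the conjugation/nondegeneracy argument --- is complete and is the standard one.
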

Symmetries of $\Omega$ imply that for $\xi,\eta,\zeta,\nu\in T^{1,0}M$
\[
\Omega(\xi,\eta,\cdot,\cdot)=\Omega(\cdot,\cdot,\zeta,\nu)=0,\quad
\Omega(\xi,\bar\eta,\zeta,\bar{\nu})=\bar{\Omega(\eta,\bar\xi,\nu,\bar{\zeta})},
\]
in particular $\Omega(\xi,\bar \xi,\eta,\bar\eta)\in\R$. It is easy to check that the values $\Omega(\xi,\bar \xi,\eta,\bar\eta)$ for $\xi,\eta \in T^{1,0}M$ completely determine tensor $\Omega$.
\begin{definition}\label{def:curvature_type}
We say that a real tensor $u\in (T^* M)^{\otimes 4}$ is of \emph{curvature type}, if it satisfies all the symmetries and $J$-invariance properties of Proposition~\ref{prop:curvature_symmetries}.
\end{definition}

In computations we will use coordinate notation for different tensors and assume Einstein summation for repeated upper/lower indices, e.g.,
\[
\begin{split}
&\n_i:=\n_{\pd/\pd z_i},\\
&\Omega_{i\bar j k\bar l}:=\Omega(\pd/\pd z_i,\pd/\pd \bar z_j,\pd/\pd z_k,\pd/\pd \bar z_l),\\ 
&T_{ij}^k\ \pd/ \pd z_k:=T(\pd/\pd z_i,\pd/\pd z_j),\\
&T_{ij\bar l}:=g(T(\pd/\pd z_i,\pd/\pd z_j), \pd /\pd\bar{z}_l).
\end{split}
\]
There is no summation assumed if two lower or two upper indices are repeated.

Unlike the Riemannian case, the Chern curvature does not satisfy the classical Bianchi identities, since the Chern connection has torsion. However, in this case slightly modified identities, involving torsion still hold~\cite[Ch.\,III, Thm.\,5.3]{ko-no-63}.

\begin{proposition}[Bianchi identities for the Chern curvature]
For any complex vectors $X,Y,Z\in T_\C M$ one has respectively the first and the second Bianchi identities
\begin{equation}
\begin{split}
&\sum_{\mathfrak{S}_3} \Omega(X,Y)Z=\sum_{\mathfrak{S}_3} \bigr(T(T(X,Y), Z)+\n_X T(Y,Z)\bigl),\\
&\sum_{\mathfrak{S}_3}\bigl(\n_X\Omega(Y,Z)+\Omega(T(X,Y),Z)\bigr)=0,
\end{split}
\end{equation}
where the sum is taken over all cyclic permutations. In particular, using the $J$-invariance of $\Omega$ and the vanishing of the $(1,1)$-part of $T$, one gets that for any (1,0)-complex vectors $\xi,\eta,\zeta\in T^{1,0}M$
\begin{equation*}
\begin{split}
	\Omega(\xi, \bar \eta)\zeta-\Omega(\zeta, \bar \eta)\xi=&\n_{\bar \eta}T(\zeta,\xi),\\
	\n_\zeta\Omega(\xi,\bar\eta)-\n_\xi\Omega(\zeta,\bar\eta)=&\Omega(T(\xi,\zeta),\bar\eta).
\end{split}
\end{equation*}
Equivalently, in the coordinates
\begin{equation}
	\begin{split}
	\Omega_{i \bar{j} k \bar{l}}=\Omega_{k \bar{j} i \bar{l}} + \n_{\bar{j}} T_{ki\bar l},\quad &
	\Omega_{i \bar{j} k \bar{l}}=\Omega_{i \bar{l} k \bar{j}} + \n_{i} T_{\bar{l}\bar{j} k},\\
	\n_m \Omega_{i \bar{j} k \bar{l}} = \n_i\Omega_{m \bar{j} k \bar{l}} + T^p_{i m}\Omega_{p \bar{j} k \bar{l}},\quad &
	\n_{\bar n} \Omega_{i \bar{j} k \bar{l}} = \n_{\bar j}\Omega_{i \bar{n} k \bar{l}} + T^{\bar s}_{\bar j \bar n}\Omega_{i \bar{s} k \bar{l}}.
	\end{split}
\end{equation}
\end{proposition}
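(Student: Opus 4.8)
The plan is to derive both displayed identities from the two general Bianchi identities valid for an \emph{arbitrary} linear connection with torsion, then specialize to $(1,0)$-vectors, and finally rewrite everything in coordinates. For the general identities one may simply quote~\cite[Ch.\,III, Thm.\,5.3]{ko-no-63}, or reprove them directly: writing $[X,Y]=\n_X Y-\n_Y X-T(X,Y)$ and substituting into $\Omega(X,Y)Z=\n_X\n_Y Z-\n_Y\n_X Z-\n_{[X,Y]}Z$, the cyclic sum of the $\n\n$-terms is absorbed by the Jacobi identity for the Lie bracket, and the rest reassembles into $\sum_{\mathfrak{S}_3}\bigl(T(T(X,Y),Z)+(\n_X T)(Y,Z)\bigr)$, where $(\n_X T)(Y,Z):=\n_X(T(Y,Z))-T(\n_X Y,Z)-T(Y,\n_X Z)$; differentiating $\Omega$ (as an $\mathrm{End}(TM)$-valued $2$-form) along $X$, cyclically summing, and invoking the first identity gives the second.

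The core step is the specialization. Into the first identity I would substitute the triple $(X,Y,Z)=(\zeta,\bar\eta,\xi)$ and into the second $(X,Y,Z)=(\xi,\zeta,\bar\eta)$, where $\xi,\eta,\zeta\in T^{1,0}M$. Three facts then annihilate almost every term: (i) by Proposition~\ref{prop:curvature_symmetries}, $\Omega(\cdot,\cdot)$ vanishes as an endomorphism when its two arguments are both of type $(1,0)$ (or both $(0,1)$), so $\Omega(\xi,\zeta)=0$; (ii) the $(1,1)$-part of $T$ is zero, so $T(\xi,\bar\eta)=0$ and, as one checks term by term, all torsion-squared terms $T(T(X,Y),Z)$ in the cyclic sum drop; (iii) $\n J=0$, hence $\n$ respects the splitting $T_\C M=T^{1,0}M\oplus T^{0,1}M$, so together with (i)--(ii) the tensorial derivatives $(\n_X T)(\xi,\bar\eta)$ and $(\n_X\Omega)(\xi,\zeta)$ also vanish. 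Discarding these and using the antisymmetry of $\Omega$ and $T$ to reorder the survivors, the first identity collapses to $\Omega(\xi,\bar\eta)\zeta-\Omega(\zeta,\bar\eta)\xi=(\n_{\bar\eta}T)(\zeta,\xi)$ and the second to $(\n_\zeta\Omega)(\xi,\bar\eta)-(\n_\xi\Omega)(\zeta,\bar\eta)=\Omega(T(\xi,\zeta),\bar\eta)$, which are exactly the displayed equations, once one notes that on local holomorphic frames $\n_{\bar\eta}\zeta=\n_{\bar\eta}\xi=0$, so that the covariant derivative of the tensor $T$ reduces pointwise to $\n_{\bar\eta}\bigl(T(\zeta,\xi)\bigr)$.

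For the coordinate form I would evaluate at an arbitrary point in holomorphic coordinates $z_1,\dots,z_n$, take $\xi=\pd/\pd z_i$, $\eta=\pd/\pd z_j$, $\zeta=\pd/\pd z_k$ so the Lie brackets vanish, pair the vector-valued identities with $\pd/\pd z_k,\pd/\pd\bar z_l$, and move $g$ through the covariant derivatives using $\n g=0$; this produces $\Omega_{i\bar jk\bar l}=\Omega_{k\bar ji\bar l}+\n_{\bar j}T_{ki\bar l}$ and $\n_m\Omega_{i\bar jk\bar l}=\n_i\Omega_{m\bar jk\bar l}+T^p_{im}\Omega_{p\bar jk\bar l}$. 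The remaining two coordinate identities follow from these by complex conjugation, using the reality relations $\Omega_{i\bar jk\bar l}=\bar{\Omega_{j\bar il\bar k}}$, $\bar{T_{ij\bar k}}=T_{\bar i\bar jk}$ and $\bar{T^p_{ij}}=T^{\bar p}_{\bar i\bar j}$ coming from Proposition~\ref{prop:curvature_symmetries} and the reality of $T$, followed by a relabeling of indices.

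I do not anticipate a substantive obstacle: the work is all bookkeeping in the specialization step --- keeping track of precisely which torsion and curvature contributions drop out by type, handling the antisymmetry signs correctly when rearranging the two cyclic sums, and confirming that reading $\n_{\bar\eta}T(\zeta,\xi)$ as the tensorial covariant derivative evaluated on holomorphic frames is consistent with the coordinate identity.
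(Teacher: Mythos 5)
The paper offers no written proof of this proposition—it cites Kobayashi--Nomizu for the general torsion-modified Bianchi identities and leaves the type-decomposition specialization to the reader—and your proposal correctly supplies exactly those details: the general identities, the term-by-term elimination using $\Omega(\xi,\zeta)=0$, the vanishing of the $(1,1)$-part of $T$, the type-preservation of $\n$, and the passage to coordinates with the conjugate identities obtained via $\Omega_{i\bar jk\bar l}=\bar{\Omega_{j\bar il\bar k}}$. This is the intended argument and it is correct.
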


\begin{definition}\label{def:positive}
For a curvature-type tensor $u$ we write $u>0$, resp.\,$u\ge 0$, if for any non-zero $\xi,\eta\in T^{1,0}M$ tensor $u$ satisfies
\[
u(\xi,\bar \xi, \eta,\bar \eta)>0, (\mbox{resp.} \ge0).
\]
We say that a complex Hermitian manifold $(M,g,J)$ has \emph{Griffiths positive} (resp.\,non-negative) curvature, if its Chern curvature is positive (resp.\,non-negative), i.e., $\Omega>0$, (resp.\,$\Omega\ge 0$). 
\end{definition}

\begin{remark}
The notion of Griffiths-positivity could be defined for an arbitrary holomorphic Hermitian bundle. Namely, holomorphic vector bundle $(\mc E, h)$ is said to be Griffiths-positive, if its Chern curvature tensor $\Omega\in\Lambda^{1,1}T^*M\otimes {\rm End}(\mc E)\simeq \Lambda^{1,1}T^*M\otimes \mc E^*\otimes \mc E^*$ is positive on all non-zero decomposable tensors $\xi\otimes\bar\xi\otimes v\otimes\bar v$, $\xi\in T^{1,0}M$, $v\in \mc E$.

Griffiths positivity implies ampleness of $\mc E$ and conjecturally any ample bundle $\mc E$ admits Griffiths positive metric $h$, see~\cite[Prob.\,11.14]{de}.

Complex Hermitian manifold $(M,g,J)$ has \emph{Griffiths positive} curvature if and only if its holomorphic tangent bundle $(T^{1,0}M, g)$ is positive in the sense of Griffiths, see~\cite{de}.
\end{remark}

\begin{remark}
If a Hermitian manifold $(M,g,J)$ is K\"ahler, i.e., its Chern connection does not have torsion, then the curvature tensor $\Omega(X,JX,JY,Y)$ coincides with the bisectional holomorphic curvature. In particular, in this case $(M,g,J)$ has Griffiths positive curvature if and only if the \emph{bisectional holomorphic curvature} of the K\"ahler metric $g$ is positive.
\end{remark}

\begin{example}\label{ex:homogeneous}
It is well known, that the induced metric on a quotient bundle of a Griffiths non-negative Hermitian bundle $(\mc E,h)$ is Griffiths non-negative,~\cite[Prop.\,6.10]{de}. Hence, if $T^{1,0}M$ is globally generated, then the metric $h$, induced on $T^{1,0}M$ by the natural projection from the trivial bundle $H^0(M,T^{1,0} M)\simeq \C^r$ with a constant metric, is Griffiths non-negative. 

In particular, any complex homogeneous space has a metric with Griffiths non-negative Chern curvature. Note, that this metric does not have to be K\"ahler, even though the underlying manifold might be projective. In fact, from the generalized Frankel conjecture proved by Mok~\cite{mo-88}, it follows, that Griffiths positive K\"ahler metric exists only on the finite quotients of rational symmetric spaces. For example the flag manifold $U(n)/T^n, n\ge 3$ is projective, admits Hermitian metric of non-negative Griffiths curvature, but does not admit a K\"ahler metric of non-negative bisectional holomorphic curvature. 
\end{example}

\section{Variation formulas}\label{sec:variation-formulas}
Let $\delta g=k$ be a variation of the Hermitian metric. In this section we compute variations of the Chern connection $\n$, its torsion and the Chern curvature $\Omega$. Note that unlike $\n$ itself, its variation $\delta\n$ is a tensor.
\begin{proposition}\label{prop:var_nabla}
Under the variation of the metric $\delta g=k$, the variation $\delta\n$ of the Chern connection is given by the formula
\begin{equation}
\begin{split}
(\delta\n)_{\bar\xi}\eta=(\delta\n)_{\xi}\bar\eta=0,\quad 
g\bigl((\delta\n)_\xi \eta, \bar\zeta\bigr)=\n_\xi k(\eta,\bar\zeta).
\end{split}
\end{equation}
\end{proposition}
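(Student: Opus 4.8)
The plan is to use the two features of the Chern connection that survive the variation: (i) by the Remark above, its $(0,1)$-part on $T^{1,0}M$ is $\bar\pd$, which does not depend on the metric at all; and (ii) it is metric-compatible, $\n g=0$. Since $\delta\n$ is a tensor and every $\n^{g+\epsilon k}$ commutes with $J$, so does $\delta\n$; hence $\delta\n$ is determined by the three families $(\delta\n)_{\bar\xi}\eta$, $(\delta\n)_\xi\bar\eta$ and $g\bigl((\delta\n)_\xi\eta,\bar\zeta\bigr)$ with $\xi,\eta,\zeta\in T^{1,0}M$, the conjugate components being fixed by the reality of $g$ and $k$.

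For the first two, write $\eta=f^\alpha e_\alpha$ in a local holomorphic frame $\{e_\alpha\}$ of $T^{1,0}M$; property (ii) of the Remark gives $\n_{\bar\xi}e_\alpha=0$, so $\n_{\bar\xi}\eta=\bar\xi(f^\alpha)\,e_\alpha$ is metric-independent and $(\delta\n)_{\bar\xi}\eta=0$, while $(\delta\n)_\xi\bar\eta=0$ follows by conjugation. For the third component, fix vector fields $\xi,\eta,\zeta$ and differentiate the metric-compatibility identity $\xi\bigl(g(\eta,\bar\zeta)\bigr)=g(\n_\xi\eta,\bar\zeta)+g(\eta,\n_\xi\bar\zeta)$, valid for every metric in the family. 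Using $(\delta\n)_\xi\bar\zeta=0$ to discard one term, this yields
\[
\xi\bigl(k(\eta,\bar\zeta)\bigr)=g\bigl((\delta\n)_\xi\eta,\bar\zeta\bigr)+k(\n_\xi\eta,\bar\zeta)+k(\eta,\n_\xi\bar\zeta);
\]
on the other hand, expanding $\n_\xi$ on the function $k(\eta,\bar\zeta)$ gives $\xi\bigl(k(\eta,\bar\zeta)\bigr)=(\n_\xi k)(\eta,\bar\zeta)+k(\n_\xi\eta,\bar\zeta)+k(\eta,\n_\xi\bar\zeta)$. Subtracting the two identities leaves $g\bigl((\delta\n)_\xi\eta,\bar\zeta\bigr)=\n_\xi k(\eta,\bar\zeta)$, as claimed.

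There is no analytic subtlety here: the content is essentially a one-line consequence of $\n g=0$ together with the metric-independence of the $(0,1)$-part. The only point demanding care is the reduction step --- one must check that the three families of components listed above genuinely determine $\delta\n$ (this is where $J$-invariance of $\delta\n$ and reality of $g,k$ enter), and observe that the torsion symmetry condition in the definition of the Chern connection need not be imposed on $\delta\n$ separately, since it is automatically inherited from the fact that each $\n^{g+\epsilon k}$ is itself a Chern connection.
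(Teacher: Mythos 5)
Your proposal is correct and follows essentially the same route as the paper: both arguments establish $(\delta\n)_{\bar\xi}\eta=0$ from the metric-independence of the $(0,1)$-part of $\n$, and then differentiate the metric-compatibility identity $\xi\!\cdot\! g(\eta,\bar\zeta)=g(\n_\xi\eta,\bar\zeta)+g(\eta,\n_\xi\bar\zeta)$ to extract $g\bigl((\delta\n)_\xi\eta,\bar\zeta\bigr)=\n_\xi k(\eta,\bar\zeta)$. You are somewhat more explicit than the paper in unpacking the Leibniz rule for $\n_\xi k$ and in justifying why the listed components determine $\delta\n$, but these are expository refinements of the same argument rather than a different approach.
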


\begin{proof}
To prove the first formula, we just notice that $\n_{\bar\xi}\eta=\bar\pd_{\bar\xi}\zeta$ is completely defined by the holomorphic structure on the bundle $T^{1,0}M$ and is independent of the choice of $g$.

To prove the second formula, let us take the variation of the identity
\[
\xi\!\cdot\! g(\eta, \bar\zeta)=g(\n_\xi \eta,\bar\zeta)+g(\eta,\n_\xi\bar\zeta),
\]
where $\xi,\eta,\zeta\in T^{1,0}M$. 
\[
\xi\!\cdot\! k(\eta,\bar\zeta)=k(\n_\xi \eta,\bar\zeta)+k(\eta,\n_{\xi}\bar\zeta)+g((\delta\n)_\xi \eta,\bar\zeta).
\]
Collecting the expressions involving $k$ on the one side, we get the desired identity. 
\end{proof}
Proposition~\ref{prop:var_nabla} immediately imply the variation formula for the torsion. 
\begin{proposition}\label{prop:var_torsion}
With the same notations as in Proposition~\ref{prop:var_nabla}, the variation of the torsion tensor $T(\xi,\eta)$ is given by the formula
\[
g((\delta T)(\xi,\eta),\bar\zeta)=\n_\xi k(\eta,\bar\zeta)-\n_\eta k(\xi, \bar\zeta).
\]
\end{proposition}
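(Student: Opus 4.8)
The plan is to differentiate the defining identity of the torsion and substitute the variation formula for the Chern connection from Proposition~\ref{prop:var_nabla}. Recall that for $(1,0)$-vector fields $\xi,\eta$ one has $T(\xi,\eta)=\n_\xi\eta-\n_\eta\xi-[\xi,\eta]$, and that, since $J$ is integrable, the bracket $[\xi,\eta]$ again lies in $T^{1,0}M$ and is a purely differential-topological object with no dependence on $g$. Hence taking the variation annihilates the bracket term and leaves $(\delta T)(\xi,\eta)=(\delta\n)_\xi\eta-(\delta\n)_\eta\xi$, which is manifestly tensorial (a difference of tensors), consistent with the remark preceding Proposition~\ref{prop:var_nabla} that $\delta\n$ is a tensor.

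Next I would pair both sides with $\bar\zeta$ using the $\C$-bilinear extension of $g$. Since $(\delta T)(\xi,\eta)\in T^{1,0}M$, this is the natural pairing, and one obtains
\[
g\bigl((\delta T)(\xi,\eta),\bar\zeta\bigr)=g\bigl((\delta\n)_\xi\eta,\bar\zeta\bigr)-g\bigl((\delta\n)_\eta\xi,\bar\zeta\bigr).
\]
Applying the second formula of Proposition~\ref{prop:var_nabla}, namely $g((\delta\n)_\xi\eta,\bar\zeta)=\n_\xi k(\eta,\bar\zeta)$, to each term on the right yields exactly $\n_\xi k(\eta,\bar\zeta)-\n_\eta k(\xi,\bar\zeta)$, as claimed.

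There is no substantive obstacle here: the statement is an immediate corollary of Proposition~\ref{prop:var_nabla}. The only points deserving a moment's care are (i) confirming that the $(2,0)$-part of the torsion, evaluated on $(1,0)$-vectors, takes values in $T^{1,0}M$, so that the pairing with $\bar\zeta$ is the relevant one — this follows from $\n$ preserving the type decomposition together with integrability of $J$ — and (ii) noting that it suffices to verify the identity on vector fields of the stated type, since both sides are tensorial in $\xi,\eta$ and the torsion is determined by its $(2,0)$-part, the $(1,1)$-part vanishing by definition of the Chern connection and the $(0,2)$-part being the conjugate.
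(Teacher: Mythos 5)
Your argument is correct and is precisely the (unwritten) justification the paper intends: vary $T(\xi,\eta)=\n_\xi\eta-\n_\eta\xi-[\xi,\eta]$, note the bracket is $g$-independent, and apply Proposition~\ref{prop:var_nabla} to the two remaining terms. The paper simply asserts this as an immediate consequence, so you have filled in the same route in slightly more detail.
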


\begin{proposition}\label{prop:var_omega}
With the same notations as in Proposition~\ref{prop:var_nabla}, the variations of the (3,1)-type and (4,0)-type Chern curvatures are given by the formulas
\begin{equation}
\begin{split}
g\bigl((\delta\Omega)(\xi,\bar\eta)\zeta,\bar\nu\bigr)&=
-\n_{\bar \eta}\n_\xi k(\zeta,\bar\nu)+\n_{\n_{\bar{\eta}\xi}}k(\zeta,\bar{\nu}),\\
(\delta\Omega)(\xi,\bar\eta,\zeta,\bar\nu)&=
k\bigl(\Omega(\xi,\bar\eta)\zeta,\bar\nu\bigr)-\n_{\bar \eta}\n_\xi k(\zeta,\bar\nu)+\n_{\n_{\bar{\eta}\xi}}k(\zeta,\bar{\nu}).
\end{split}
\end{equation}
\end{proposition}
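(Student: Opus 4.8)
The plan is to differentiate the definition of the Chern curvature directly. Recall that $A:=\delta\n$ is a tensor, an $\mathrm{End}(T_\C M)$-valued $1$-form; write $A(X,Z):=(\delta\n)_XZ$. Differentiating $\Omega(X,Y)Z=\n_X\n_YZ-\n_Y\n_XZ-\n_{[X,Y]}Z$, applying the Leibniz rule, and regrouping the bare derivatives of $A$ into the tensorial derivative $(\n_XA)(Y,Z):=\n_X\bigl(A(Y,Z)\bigr)-A(\n_XY,Z)-A(Y,\n_XZ)$, one is left with the general variation formula
\[
(\delta\Omega)(X,Y)Z=(\n_XA)(Y,Z)-(\n_YA)(X,Z)+A\bigl(T(X,Y),Z\bigr);
\]
the last term appears because $\n_XY-\n_YX-[X,Y]=T(X,Y)$ does not vanish for the Chern connection. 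This torsion term is the one point one must not overlook.

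Next I would set $X=\xi$, $Y=\bar\eta$, $Z=\zeta$ with $\xi,\eta,\zeta\in T^{1,0}M$ and observe that two of the three terms vanish. The torsion term vanishes because the $(1,1)$-part of $T$ vanishes, $T(\xi,\bar\eta)=0$. The term $(\n_\xi A)(\bar\eta,\zeta)$ vanishes because, by Proposition~\ref{prop:var_nabla}, $A$ annihilates a $(1,0)$-vector placed in its second slot whenever the first slot is of type $(0,1)$, and $\n$ preserves the type decomposition ($\n J=0$); writing $(\n_\xi A)(\bar\eta,\zeta)=\n_\xi\bigl(A(\bar\eta,\zeta)\bigr)-A(\n_\xi\bar\eta,\zeta)-A(\bar\eta,\n_\xi\zeta)$, every summand is of this form. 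Hence
\[
(\delta\Omega)(\xi,\bar\eta)\zeta=-(\n_{\bar\eta}A)(\xi,\zeta).
\]

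The last step is to pair with $\bar\nu$ and trade $A$ for $k$. Expanding $(\n_{\bar\eta}A)(\xi,\zeta)=\n_{\bar\eta}\bigl(A(\xi,\zeta)\bigr)-A(\n_{\bar\eta}\xi,\zeta)-A(\xi,\n_{\bar\eta}\zeta)$, taking $g(\,\cdot\,,\bar\nu)$, using $\n g=0$ on the first summand, and then substituting $g\bigl(A(\alpha,\beta),\bar\gamma\bigr)=\n_\alpha k(\beta,\bar\gamma)$ from Proposition~\ref{prop:var_nabla} everywhere, one more application of the Leibniz rule for $\n_{\bar\eta}$ acting on the $2$-tensor $\n_\xi k$ collapses everything to $-\n_{\bar\eta}\n_\xi k(\zeta,\bar\nu)+\n_{\n_{\bar\eta}\xi}k(\zeta,\bar\nu)$, which is the first displayed formula of the proposition. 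For the $(4,0)$-type curvature I would simply differentiate $\Omega(\xi,\bar\eta,\zeta,\bar\nu)=g\bigl(\Omega(\xi,\bar\eta)\zeta,\bar\nu\bigr)$: varying the metric used to lower the index contributes the extra term $k\bigl(\Omega(\xi,\bar\eta)\zeta,\bar\nu\bigr)$, and the remaining part is the $(3,1)$-type formula just obtained.

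I do not expect a genuine obstacle here: the content is the general variation-of-curvature identity plus the type restrictions of Proposition~\ref{prop:var_nabla}, which do all the cancellation; no estimate is involved. The only real care needed is the bookkeeping of Leibniz-rule terms in the last step. A cleaner way to present it is to fix a point $p$ and choose the extensions of $\xi,\eta,\zeta,\nu$ to be covariantly constant at $p$, so that every $\n_{(\cdot)}(\cdot)$ term drops and only the two displayed second-derivative terms survive, tensoriality of $\delta\Omega$ then giving the identity for all extensions.
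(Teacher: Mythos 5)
Your proposal is correct, and it takes a mildly different route from the paper's. The paper differentiates the bracket expression $\Omega(\xi,\bar\eta)\zeta=[\n_\xi,\n_{\bar\eta}]\zeta-\n_{[\xi,\bar\eta]}\zeta$ directly, using the identity $[\xi,\bar\eta]=\n_\xi\bar\eta-\n_{\bar\eta}\xi$ (a consequence of $T^{(1,1)}=0$ for the Chern connection) to convert $(\delta\n)_{[\xi,\bar\eta]}\zeta$ into $-(\delta\n)_{\n_{\bar\eta}\xi}\zeta$, and then expands $[(\delta\n)_\xi,\n_{\bar\eta}]\zeta$ term by term. You instead invoke the invariant variation identity $(\delta\Omega)(X,Y)Z=(\n_XA)(Y,Z)-(\n_YA)(X,Z)+A\bigl(T(X,Y),Z\bigr)$ for a connection with torsion, and then kill two of the three terms by type considerations: $T(\xi,\bar\eta)=0$ disposes of the torsion term, while $(\n_\xi A)(\bar\eta,\zeta)=0$ because $A$ vanishes whenever its first slot is $(0,1)$ or its second slot is pure $(0,1)$ paired with a $(1,0)$ first slot, and $\n$ preserves type. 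What your packaging buys is that the torsion's role and the Hermitian type restrictions are made visible at the level of a single general formula, rather than being absorbed into the manipulation of $[\xi,\bar\eta]$. The two approaches both funnel through Proposition~\ref{prop:var_nabla} and produce the same two-term expression. One small imprecision in your closing remark: if you choose extensions of $\xi,\eta,\zeta,\nu$ that are covariantly constant at $p$, then $\n_{\bar\eta}\xi=0$ there, so the correction term $\n_{\n_{\bar\eta}\xi}k(\zeta,\bar\nu)$ also vanishes at $p$; only the single second-derivative term survives, and the correction term must be reinstated when one writes the formula for general extensions. This is a cosmetic overstatement, not an error in the argument.
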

\begin{proof}
Clearly the second formula follows from the first one. Before we start proving the first formula, note that for any $\zeta,\nu\in \Gamma(T^{1,0}M)$ we have $[\zeta,\bar{\nu}]=\n_{\zeta}\bar{\nu}-\n_{\bar{\nu}}\zeta$, since the $(1,1)$-part of the torsion vanishes. In particular, the $(1,0)$-part of $[\zeta,\bar{\nu}]$ is $-\n_{\bar{\nu}\zeta}$.

We have $\Omega(\xi,\bar\eta)\zeta=[\n_\xi,\n_{\bar\eta}]\zeta-\n_{[\xi,\bar{\eta}]}\zeta$. Using the result of Proposition~\ref{prop:var_nabla} we get
\begin{equation}
\begin{split}
g\bigl((\delta\Omega)&(\xi,\bar\eta)\zeta,\bar\nu\bigr)\\=&
	g\bigl([(\delta\n)_\xi,\n_{\bar\eta}]\zeta,\bar\nu\bigr)+
	g\bigl([\n_\xi,(\delta\n)_{\bar\eta}]\zeta,\bar\nu\bigr)-
	g((\delta\n)_{[\xi,\bar{\eta}]}\zeta,\bar\nu)\\=&
g\bigl([(\delta\n)_\xi,\n_{\bar\eta}]\zeta,\bar\nu\bigr)+\n_{\n_{\bar{\eta}\xi}}k(\zeta,\bar{\nu})\\=&
g\bigl((\delta\n)_\xi \n_{\bar\eta}\zeta,\bar\nu\bigr)-
g\bigl(\n_{\bar\eta}(\delta\n)_\xi\zeta,\bar\nu\bigr)+
\n_{\n_{\bar{\eta}\xi}}k(\zeta,\bar{\nu})
\\=&
\n_\xi k(\n_{\bar\eta}\zeta,\bar\nu)-\bar\eta\!\cdot\!g\bigl((\delta\n)_\xi\zeta,\bar\nu\bigr)+
g\bigl((\delta\n)_\xi\zeta, \n_{\bar\eta}\bar\nu\bigr)+
\n_{\n_{\bar{\eta}\xi}}k(\zeta,\bar{\nu})\\=&
\n_\xi k(\n_{\bar\eta}\zeta,\bar\nu)-\bar\eta\!\cdot\!\n_\xi k(\zeta,\bar\nu)+\n_\xi k(\zeta,\n_{\bar\eta}\bar\nu)+
\n_{\n_{\bar{\eta}\xi}}k(\zeta,\bar{\nu})\\=&
-\n_{\bar \eta}\n_\xi k(\zeta,\bar\nu)+\n_{\n_{\bar{\eta}\xi}}k(\zeta,\bar{\nu}).
\end{split}
\end{equation}
Here in the second equality we use the facts that $(\delta\n)_{\bar \eta}$ vanishes on (1,0)-vectors and that $(\delta\n)_{[\xi,\bar{\eta}]}\zeta=-(\delta\n)_{\n_{\bar{\eta}\xi}}\zeta$. 
\end{proof}

\section{Evolution of the Chern curvature and torsion under the HCF}\label{sec:evolution}
The main object of our study is the following specification of a general Hermitian curvature flow~\eqref{eq:HCF_intro}
\begin{equation}\label{eq:HCF}
\begin{cases}
\cfrac {d g_{i\bar j}(t)}{d t} = -S^{g(t)}_{i\bar j}-Q^{g(t)}_{i\bar j},\\
g(0)=g_0,
\end{cases}
\end{equation}
where 
\[
S^{g(t)}_{i\bar j}=g^{m\bar n}\Omega_{m\bar n i\bar j}
\quad\mbox{ and }\quad
Q^{g(t)}_{i\bar j}=\frac{1}{2}g^{m\bar n}g^{p\bar s}T_{pm\bar j}T_{\bar s\bar n i}
\] 
are the second Ricci-Chern curvature and a certain quadratic torsion term for $g=g(t)$. By~\cite[Prop.\,5.1]{st-ti-11} there exists unique solution to equation~\eqref{eq:HCF} on some time interval $[0,\tau)$ for some $\tau>0$. Our first goal is to derive evolution equation for the Chern curvature under this HCF. The entire computation is based on Proposition~\ref{prop:var_omega} and uses solely Bianchi identities and commutation of covariant derivatives.

\begin{proposition}\label{prop:omega_evolution}
Assume that $g(t)$ solves the HCF~\eqref{eq:HCF} on $[0;\tau)$. Then the tensor $\Omega(t):=\Omega^{g(t)}$ evolves according to the equation
\begin{equation}\label{eq:dt_Omega}
\begin{split}
\frac{d}{dt}\Omega_{i\bar i j\bar j}&=
	g^{m\bar n}\Bigl(
		\n_{m}\n_{\bar n}\Omega_{i\bar i j \bar j}+
		T_{\bar n\bar i}^{\bar r}\n_m\Omega_{i \bar r j \bar j}+
		T_{m i}^q \n_{\bar n}\Omega_{q \bar i j \bar j}\\&+
		T_{m i}^q T_{\bar n\bar i}^{\bar r} \Omega_{q \bar r j \bar j}+
		g^{p\bar s}(
			T_{pm\bar j}\n_{\bar n}\Omega_{i\bar i j\bar s}+
			T_{\bar s\bar n j}\n_m\Omega_{i\bar i p \bar j}\\&+
			T_{pm\bar j}T^{\bar r}_{\bar n\bar i}\Omega_{i\bar r j\bar s}+
			T_{\bar s\bar n j}T^q_{mi}\Omega_{q \bar i p \bar j}+
			g^{q\bar r}T_{\bar s\bar n j}T_{qm\bar j}\Omega_{i\bar i p \bar r}
		)
	\Bigr)\\&+
	\frac{1}{2}|\n_i T_{\cdot \cdot \bar j}|^2+
	g^{m\bar n}g^{p\bar s}(
		\Omega_{i \bar i m \bar s} \Omega_{p\bar n j\bar j}+
		\Omega_{m \bar i j \bar s} \Omega_{i\bar n p\bar j}-
		\Omega_{m \bar i p \bar j} \Omega_{i\bar s j\bar n}
	)\\&-
	g^{p\bar s}(
		S_{p\bar j}+Q_{p\bar j}
	)\Omega_{i\bar i j\bar s}-
	g^{p\bar s}S_{p\bar i}\Omega_{i\bar sj\bar j}-
	g^{p\bar s}Q_{j\bar s}\Omega_{i\bar i p\bar j}.
\end{split}
\end{equation}
\end{proposition}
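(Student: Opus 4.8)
The plan is to differentiate $\Omega_{i\bar i j\bar j}$ directly from the first–variation formula of Proposition~\ref{prop:var_omega} applied to the variation $k=\dot g=-S-Q$, and then to rewrite the two resulting Hessian terms as the ``Chern Laplacian'' $g^{m\bar n}\n_m\n_{\bar n}\Omega_{i\bar i j\bar j}$ plus lower–order terms, using only the two Bianchi identities (and their conjugates) and the commutation rule for mixed covariant derivatives. I would work in holomorphic coordinates around a fixed point, so that the coordinate fields $\pd/\pd z_i$ are holomorphic; then the last term $\n_{\n_{\bar\eta}\xi}k(\zeta,\bar\nu)$ of Proposition~\ref{prop:var_omega} drops out, because $\n_{\pd/\pd\bar z_i}(\pd/\pd z_i)=0$. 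Since $\Omega(\pd/\pd z_i,\pd/\pd\bar z_i)\pd/\pd z_j=g^{p\bar s}\Omega_{i\bar i j\bar s}\,\pd/\pd z_p$, substituting $k=-S-Q$ into Proposition~\ref{prop:var_omega} gives the starting identity
\[
\frac{d}{dt}\Omega_{i\bar i j\bar j}=\n_{\bar i}\n_i S_{j\bar j}+\n_{\bar i}\n_i Q_{j\bar j}-g^{p\bar s}(S_{p\bar j}+Q_{p\bar j})\Omega_{i\bar i j\bar s},
\]
which already furnishes the term $-g^{p\bar s}(S_{p\bar j}+Q_{p\bar j})\Omega_{i\bar i j\bar s}$ of~\eqref{eq:dt_Omega}; all remaining terms must be extracted from the two Hessians.

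For $\n_{\bar i}\n_i S_{j\bar j}$ I would write $S_{j\bar j}=g^{m\bar n}\Omega_{m\bar n j\bar j}$ and pull the parallel factor $g^{m\bar n}$ outside every derivative, then: (i) apply the second Bianchi identity to interchange the differentiation index $i$ with the curvature index $m$ (producing a $T^q_{mi}\,\n_{\bar n}\Omega$ term and, after another conjugate Bianchi step, the $T^q_{mi}T^{\bar r}_{\bar n\bar i}\Omega$ term, as well as $\Omega\cdot\Omega$ terms from the derivative falling on the $(2,0)$-torsion, via the first Bianchi identity); (ii) commute $\n_{\bar i}$ past $\n_m$, the commutator being $\Omega(\pd_m,\pd_{\bar i})$ acting on the four slots of $\Omega_{i\bar n j\bar j}$ — with \emph{no} torsion correction, since the $(1,1)$-part of $T$ vanishes; (iii) apply the conjugate second Bianchi identity to interchange $\bar i$ with $\bar n$, which costs $g^{m\bar n}T^{\bar r}_{\bar n\bar i}\n_m\Omega_{i\bar r j\bar j}$ and, upon differentiating that torsion factor (again via the first Bianchi identity), further curvature products; the contractions of the form $g^{m\bar n}\Omega_{m\bar n\,\cdot\,\cdot}=S_{\cdot\cdot}$ collapse to $-g^{p\bar s}S_{p\bar i}\Omega_{i\bar s j\bar j}$, and the residue assembles, after a final first–Bianchi reshuffling of indices, into the curvature–squared terms of~\eqref{eq:dt_Omega}. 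The leading term is $g^{m\bar n}\n_m\n_{\bar n}\Omega_{i\bar i j\bar j}$.

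For $\n_{\bar i}\n_i Q_{j\bar j}$ with $Q_{j\bar j}=\frac12 g^{m\bar n}g^{p\bar s}T_{pm\bar j}T_{\bar s\bar n j}$, I would apply the Leibniz rule in both derivatives (the metric factors being parallel). In the two mixed terms, where $\n_i$ hits one torsion and $\n_{\bar i}$ the other, one term is exactly $\frac12|\n_i T_{\cdot\cdot\bar j}|^2$, and in the other the first Bianchi identity turns $\n_{\bar i}T_{pm\bar j}$ and $\n_i T_{\bar s\bar n j}$ into curvature, giving more $\Omega\cdot\Omega$ terms. In the two terms where both derivatives hit the same torsion, I would first use the first Bianchi identity to replace $\n_{\bar i}T_{pm\bar j}$ (resp.\ $\n_i T_{\bar s\bar n j}$) by a difference of curvatures, so that $\n_{\bar i}\n_i T_{pm\bar j}=\n_i\n_{\bar i}T_{pm\bar j}-\Omega(\pd_i,\pd_{\bar i})\!\cdot\! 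T_{pm\bar j}$ becomes a first derivative of curvature plus a curvature–times–torsion term; the second Bianchi identity (and its conjugate) then move the differentiation index into the base slot, producing $g^{m\bar n}g^{p\bar s}\bigl(T_{pm\bar j}\n_{\bar n}\Omega_{i\bar i j\bar s}+T_{\bar s\bar n j}\n_m\Omega_{i\bar i p\bar j}\bigr)$ and the remaining torsion–torsion–curvature terms, while the contractions forming $Q$ yield $-g^{p\bar s}Q_{j\bar s}\Omega_{i\bar i p\bar j}$ (the $Q$-contribution to $-g^{p\bar s}(S_{p\bar j}+Q_{p\bar j})\Omega_{i\bar i j\bar s}$ having been isolated already in the starting identity). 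Adding the expansions of $\n_{\bar i}\n_i S_{j\bar j}$ and $\n_{\bar i}\n_i Q_{j\bar j}$ and collecting all terms reproduces~\eqref{eq:dt_Omega}.

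The computation is forced at every step, so the only real difficulty is bookkeeping: the Bianchi interchanges and the single mixed commutator generate a great many torsion–correction terms that must be tracked and recombined correctly. To keep this manageable I would fix once and for all the sign convention $T^q_{im}=-T^q_{mi}$ and the conjugation rule $\overline{\Omega_{i\bar j k\bar l}}=\Omega_{j\bar i l\bar k}$, evaluate at a point where $g_{i\bar j}=\delta_{ij}$ so that raising and lowering indices is transparent, and organise everything around the principle of ``which Bianchi identity moves which index'', always keeping $\Omega_{i\bar i j\bar j}$ as the target expression. The substance lies entirely in Proposition~\ref{prop:var_omega}, the Bianchi identities, and the vanishing of the $(1,1)$-part of the torsion; no further input is needed.
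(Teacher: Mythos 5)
Your proposal follows essentially the same route as the paper's proof: start from the starting identity $\frac{d}{dt}\Omega_{i\bar ij\bar j}=\n_{\bar i}\n_i(S_{j\bar j}+Q_{j\bar j})-g^{p\bar s}(S_{p\bar j}+Q_{p\bar j})\Omega_{i\bar ij\bar s}$ obtained from Proposition~\ref{prop:var_omega} in holomorphic coordinates, expand the two Hessians $\n_{\bar i}\n_iS_{j\bar j}$ and $\n_{\bar i}\n_iQ_{j\bar j}$ separately using the first and second Bianchi identities plus the mixed commutator (with the $|\n_{\bar i}T|^2$ term converted to curvature via the first Bianchi identity), and collect. This is exactly the paper's computation, down to the decomposition into Steps~1--3.
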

\begin{proof}
The coordinate vector fields $e_i=\pd/\pd z_i$ and $e_{\bar i}=\pd/\pd \bar{z}_i$ are holomorphic, so we have $\n_i e_{\bar i}=\n_{\bar i}e_i=0$. Hence 
Proposition~\ref{prop:var_omega} implies that
\[
\frac{\pd }{\pd t}\Omega_{i\bar i j\bar j}=\n_{\bar i}\n_i
\bigl(
	S_{j\bar j}+ Q_{j\bar j}
\bigr)
-g^{p\bar s}(S_{p\bar j}+Q_{p\bar j})\Omega_{i\bar i j\bar s}.
\]
We compute separately $\n_{\bar i}\n_iS_{j \bar j}$ and $\n_{\bar i}\n_i Q_{j\bar j}$.

\medskip
\noindent {\bf Step 1.} Compute $\n_{\bar i}\n_iS_{j \bar j}$.

\noindent Let us first modify term $\n_{\bar i}\n_i S_{j\bar j}$ by applying the second Bianchi identity.
\begin{equation}\label{eq:nn_S_step1}
\begin{split}
\n_{\bar i}\n_i S_{j\bar j}=
	g^{m\bar n}\n_{\bar i}\n_i \Omega_{m \bar n j \bar j}=
	g^{m\bar n}\n_{\bar i}(\n_m\Omega_{i \bar n j\bar j} + T^p_{m i}\Omega_{p \bar n j \bar j})\\=
	g^{m\bar n}(\n_{\bar i}\n_m\Omega_{i \bar n j\bar j}+\n_{\bar i}T^p_{mi}\,\Omega_{p \bar n j \bar j}+T^p_{mi}\n_{\bar i}\Omega_{p \bar n j \bar j}).
\end{split}
\end{equation}
Now we commute $\n_{\bar i}$ with $\n_{m}$.
\begin{equation}
\begin{split}
\n_{\bar i}\n_m&\Omega_{i \bar n j\bar j}=
		\n_m\n_{\bar i}\Omega_{i\bar n j\bar j}+
		\Omega_{m \bar i i}^p \Omega_{p\bar n j\bar j}+
		\Omega_{m \bar i \bar n}^{\bar s} \Omega_{i\bar s j\bar j}+
		\Omega_{m \bar i j}^p \Omega_{i\bar n p\bar j}+
		\Omega_{m \bar i \bar j}^{\bar s} \Omega_{i\bar n j\bar s}
		\\=&
		\n_m\n_{\bar i}\Omega_{i\bar n j\bar j}+g^{p\bar s}
		\bigl(
			\Omega_{m \bar i i \bar s} \Omega_{p\bar n j\bar j}-
			\Omega_{m \bar i p \bar n} \Omega_{i\bar s j\bar j}+
			\Omega_{m \bar i j \bar s} \Omega_{i\bar n p\bar j}-
			\Omega_{m \bar i p \bar j} \Omega_{i\bar n j\bar s}
		\bigr).
\end{split}
\end{equation}
Now we again apply the second Bianchi identity to the first term on the RHS of the latter expression.
\begin{equation}
\begin{split}
\n_m\n_{\bar i}\Omega_{i\bar n j\bar j}=
	\n_{m}(\n_{\bar n}\Omega_{i \bar i j \bar j} + T_{\bar n \bar i}^{\bar s}\Omega_{i \bar s j\bar j})=\\
	\n_{m}\n_{\bar n}\Omega_{i \bar i j \bar j} +
	\n_{m}T_{\bar n \bar i}^{\bar s}\Omega_{i \bar s j\bar j} + 
	T_{\bar n \bar i}^{\bar s}\n_{m}\Omega_{i \bar s j\bar j}.
\end{split}
\end{equation}
Next we use twice the first Bianchi identity
\begin{equation}
\begin{split}
\n_{\bar i} T_{m i}^p&=\Omega_{i \bar i m}^p-\Omega_{m \bar i i}^p=
g^{p\bar s}(\Omega_{i\bar i m\bar s}-\Omega_{m\bar i i \bar s})
,\\
\n_m T_{\bar n \bar i}^{\bar s}&=\Omega_{m \bar n \bar i}^{\bar s}-\Omega_{m \bar i \bar n}^{\bar s}=
g^{p\bar s}(\Omega_{m\bar i p\bar n}-\Omega_{m\bar n p\bar i});
\end{split}
\end{equation}
and once the second Bianchi identity:
\begin{equation}
\n_{\bar i}\Omega_{p \bar n j \bar j}=\n_{\bar n}\Omega_{p \bar i j \bar j}+T_{\bar n \bar i}^{\bar s}\Omega_{p \bar s j \bar j}.
\end{equation}
Collecting everything in~\eqref{eq:nn_S_step1} we get
\begin{equation}\label{eq:nn_S_final}
\begin{split}
\n_{\bar i}\n_i S_{j\bar j}=
	g^{m\bar n}(
		\n_{m}\n_{\bar n}\Omega_{i\bar i j \bar j}+
		T_{\bar n\bar i}^{\bar s}\n_m\Omega_{i \bar s j \bar j}+
		T_{m i}^p \n_{\bar n}\Omega_{p \bar i j \bar j}+
		T_{m i}^p T_{\bar n\bar i}^{\bar s} \Omega_{p \bar s j \bar j}
		)\\+
		g^{m\bar n}g^{p\bar s}(
			\Omega_{i \bar i m \bar s} \Omega_{p\bar n j\bar j}
			+\Omega_{m \bar i j \bar s} \Omega_{i\bar n p\bar j}
			-\Omega_{m \bar i p \bar j} \Omega_{i\bar n j\bar s}			
		)-g^{p\bar s}S_{p\bar i}\Omega_{i\bar sj\bar j}.
\end{split}
\end{equation}

\medskip
\noindent {\bf Step 2.} Compute $\n_{\bar i}\n_iQ_{j \bar j}$.
\begin{equation}\label{eq:nn_Q_step1}
\begin{split}
	2\n_{\bar i}\n_i Q_{j\bar j}&=
	g^{m\bar n}g^{p\bar s}\n_{\bar i}\n_i(T_{p m\bar j} T_{\bar s\bar n j})\\&=
	g^{m\bar n}g^{p\bar s}(
		\n_i T_{pm\bar j}\n_{\bar i}T_{\bar s\bar n j}+
		\n_{\bar i} T_{pm\bar j}\n_iT_{\bar s\bar n j}+
		T_{pm\bar j} \n_{\bar i}\n_i T_{\bar s\bar n j}+
		\n_{\bar i}\n_iT_{pm\bar j} T_{\bar s\bar n j}
	)
\end{split}
\end{equation}
The first two summands in brackets are $|\n_i T_{\cdot \cdot\bar j}|^2$ and $|\n_{\bar i} T_{\cdot \cdot \bar j}|^2$ respectively. We now compute $\n_{\bar i}\n_i T_{\bar s\bar n j}$ and $\n_{\bar i}\n_iT_{p m\bar j}$ using Bianchi identities.
\begin{equation}
\begin{split}
	\n_{\bar i}\n_i T_{\bar s\bar n j}=
		\n_{\bar i}(
			\Omega_{i\bar n j\bar s}-\Omega_{i\bar s j\bar n}
		)=
		\n_{\bar n}\Omega_{i\bar i j\bar s}+T^{\bar r}_{\bar n\bar i} \Omega_{i\bar r j\bar s}-
		\n_{\bar s}\Omega_{i\bar i j\bar n}-T^{\bar r}_{\bar s\bar i} \Omega_{i\bar r j\bar n}.
\end{split}
\end{equation}
Using the fact $T_{pm\bar j}$ is anti-symmetric in $m$ and $p$ we get
\begin{equation}\label{eq:2nd_torsion_derivative_expr1}
\begin{split}
	g^{m\bar n}g^{p\bar s}T_{pm\bar j}\n_{\bar i}\n_i T_{\bar s\bar n j}=&
	g^{m\bar n}g^{p\bar s}T_{pm\bar j}
	(
		\n_{\bar n}\Omega_{i\bar i j\bar s}+T^{\bar r}_{\bar n\bar i} \Omega_{i\bar r j\bar s}-
		\n_{\bar s}\Omega_{i\bar i j\bar n}-T^{\bar r}_{\bar s\bar i} \Omega_{i\bar r j\bar n}
	)\\=&
	2g^{m\bar n}g^{p\bar s}T_{pm\bar j}
	(
		\n_{\bar n}\Omega_{i\bar i j\bar s}+T^{\bar r}_{\bar n\bar i} \Omega_{i\bar r j\bar s}
	).
\end{split}
\end{equation}
To compute $\n_{\bar i}\n_iT_{pm\bar j}$ we start with commuting derivatives.
\begin{equation}
\begin{split}
	\n_{\bar i}\n_iT_{pm\bar j}=&
		\n_i\n_{\bar i}T_{pm\bar j}+
		\Omega_{i\bar i p}^qT_{qm\bar j}+
		\Omega_{i\bar i m}^qT_{pq\bar j}+
		\Omega_{i\bar i \bar j}^{\bar r}T_{pm\bar r}\\=&
		\n_i\n_{\bar i}T_{pm\bar j}+
		g^{q\bar r}(
			\Omega_{i\bar i p \bar r}T_{qm\bar j}-
			\Omega_{i\bar i m\bar r}T_{qp\bar j}-
			\Omega_{i\bar i q \bar j}T_{pm\bar r}
		).
\end{split}
\end{equation}
As in~\eqref{eq:2nd_torsion_derivative_expr1} we rewrite $g^{m\bar n}g^{p\bar s}\n_i\n_{\bar i}T_{pm\bar j}T_{\bar s\bar n j}$ and use the fact $T_{\bar s \bar n j}$ is anti-symmetric in $\bar n$ and $\bar s$. 
\begin{equation}\label{eq:2nd_torsion_derivative_expr2}
\begin{split}
	g^{m\bar n}g^{p\bar s}&\n_{\bar i}\n_iT_{pm\bar j}T_{\bar s\bar n j}\\&=
	g^{m\bar n}g^{p\bar s}T_{\bar s\bar n j}
	\bigl(
	2(
		\n_m\Omega_{i\bar i p \bar j}+T^q_{mi}\Omega_{q \bar i p \bar j}
		)+
	g^{q\bar r}
	(
		\Omega_{i\bar i p \bar r}T_{qm\bar j}-
		\Omega_{i\bar i m\bar r}T_{qp\bar j}-
		\Omega_{i\bar i q \bar j}T_{pm\bar r}
	)
	\bigr)\\&=
	g^{m\bar n}g^{p\bar s}T_{\bar s\bar n j}
	\bigl(
		2(
		\n_m\Omega_{i\bar i p \bar j}+T^q_{mi}\Omega_{q \bar i p \bar j}
		)+
	g^{q\bar r}
	(
		2\Omega_{i\bar i p \bar r}T_{qm\bar j}-
		\Omega_{i\bar i q \bar j}T_{pm\bar r}
	)
	\bigr)\\&=
	2g^{m\bar n}g^{p\bar s}T_{\bar s\bar n j}
		\bigl(
			\n_m\Omega_{i\bar i p \bar j}+T^q_{mi}\Omega_{q \bar i p \bar j}+
		g^{q\bar r}\Omega_{i\bar i p \bar r}T_{qm\bar j}
		\bigr)-
		2g^{q\bar r}\Omega_{i\bar i q \bar j}Q_{j\bar r}.		
\end{split}
\end{equation}
Expressions~\eqref{eq:2nd_torsion_derivative_expr1}, \eqref{eq:2nd_torsion_derivative_expr2}, $|\n_i T_{\cdot\cdot\bar j}|^2$, and $|\n_{\bar i} T_{\cdot\cdot\bar j}|^2$ together give
\begin{equation}\label{eq:nn_Q_final}
\begin{split}
	2\n_{\bar i}\n_iQ_{j\bar j}&=
	|\n_i T_{\cdot \cdot\bar j}|^2+
	|\n_{\bar i} T_{\cdot \cdot \bar j}|^2-
	2g^{q\bar r}\Omega_{i\bar i q\bar j}Q_{j\bar r}+\\&+
	2g^{m\bar n}g^{p\bar s}
	\bigl(
		T_{pm\bar j}\n_{\bar n}\Omega_{i\bar i j\bar s}+
		T_{\bar s\bar n j}\n_m\Omega_{i\bar i p \bar j}\\&+
		T_{pm\bar j}T^{\bar r}_{\bar n\bar i}\Omega_{i\bar r j\bar s}+
		T_{\bar s\bar n j}T^q_{mi}\Omega_{q \bar i p \bar j}+
		g^{q\bar r}T_{\bar s\bar n j}T_{qm\bar j}\Omega_{i\bar i p \bar r}
	\bigr).
\end{split}
\end{equation}

\medskip
\noindent{\bf Step 3.} Collect all terms together.

\noindent Before we sum up terms $\n_{\bar i}\n_i S_{j\bar j}$ and $\n_{\bar i}\n_i Q_{j\bar j}$ let us note that
\begin{equation}
\begin{split}
|\n_{\bar i}T_{\cdot\cdot \bar j}|^2&=g^{m\bar n}g^{p\bar s}\n_{\bar i} T_{pm\bar j}\n_iT_{\bar s\bar nj}=
g^{m\bar n}g^{p\bar s}(\Omega_{m\bar i p\bar j}-\Omega_{p\bar i m\bar j})(\Omega_{i\bar n j\bar s}-\Omega_{i\bar s j\bar n})\\&=
g^{m\bar n}g^{p\bar s}(
	\Omega_{m\bar i p\bar j}\Omega_{i\bar n j\bar s}+
	\Omega_{p\bar i m\bar j}\Omega_{i\bar s j\bar n}-
	\Omega_{m\bar i p\bar j}\Omega_{i\bar s j\bar n}-
	\Omega_{p\bar i m\bar j}\Omega_{i\bar n j\bar s}
	)\\&=
2g^{m\bar n}g^{p\bar s}(
	\Omega_{m\bar i p\bar j}\Omega_{i\bar n j\bar s}-
	\Omega_{m\bar i p\bar j}\Omega_{i\bar s j\bar n}
	).
\end{split}
\end{equation}
Equations~\eqref{eq:nn_S_final},~\eqref{eq:nn_Q_final} with the term $-g^{p\bar s}(S_{p\bar j}+Q_{p\bar j})\Omega_{i\bar i j\bar s}$ give
\begin{equation}\label{eq:dt_Omega_final}
\begin{split}
\frac{d}{dt}\Omega_{i\bar i j\bar j}&=
	\n_{\bar i}\n_i
	\bigl(
		S_{j\bar j}+ Q_{j\bar j}
	\bigr)
	-g^{p\bar s}(S_{p\bar j}+Q_{p\bar j})\Omega_{i\bar i j\bar s}\\&=
	g^{m\bar n}\Bigl(
		\n_{m}\n_{\bar n}\Omega_{i\bar i j \bar j}+
		T_{\bar n\bar i}^{\bar r}\n_m\Omega_{i \bar r j \bar j}+
		T_{m i}^q \n_{\bar n}\Omega_{q \bar i j \bar j}\\&+
		T_{m i}^q T_{\bar n\bar i}^{\bar r} \Omega_{q \bar r j \bar j}+
		g^{p\bar s}(
			T_{pm\bar j}\n_{\bar n}\Omega_{i\bar i j\bar s}+
			T_{\bar s\bar n j}\n_m\Omega_{i\bar i p \bar j}\\&+
			T_{pm\bar j}T^{\bar r}_{\bar n\bar i}\Omega_{i\bar r j\bar s}+
			T_{\bar s\bar n j}T^q_{mi}\Omega_{q \bar i p \bar j}+
			g^{q\bar r}T_{\bar s\bar n j}T_{qm\bar j}\Omega_{i\bar i p \bar r}
		)
	\Bigr)\\&+
	\frac{1}{2}|\n_i T_{\cdot \cdot \bar j}|^2+
	g^{m\bar n}g^{p\bar s}(
		\Omega_{i \bar i m \bar s} \Omega_{p\bar n j\bar j}+
		\Omega_{m \bar i j \bar s} \Omega_{i\bar n p\bar j}-
		\Omega_{m \bar i p \bar j} \Omega_{i\bar s j\bar n}
	)\\&-
	g^{p\bar s}(
		S_{p\bar j}+Q_{p\bar j}
	)\Omega_{i\bar i j\bar s}-
	g^{p\bar s}S_{p\bar i}\Omega_{i\bar sj\bar j}-
	g^{p\bar s}Q_{j\bar s}\Omega_{i\bar i p\bar j}.
\end{split}
\end{equation}
\end{proof}

\begin{definition}
Let $u_{i\bar i j\bar j}$ be a curvature-type tensor field. Tensor $w=w_{i\bar i j\bar j}$ is a \emph{first order variation of $u$} if it is of the form
\[
w_{i\bar ij\bar j}=
A^{p}_{i}u_{p\bar ij\bar j}+
B^{\bar s}_{\bar i}u_{i \bar sj\bar j}+
C^{p}_{j}u_{i\bar ip\bar j}+
D^{\bar s}_{\bar j}u_{i \bar ij\bar s}+a^p\n_p u_{i\bar i j\bar j}+
b^{\bar s}\n_{\bar s} u_{i\bar i j\bar j}
\]
for some complex $n\times n$ matrices $A,B,C,D$, and vectors $a,b$. We will write $F_1(u)$ to denote (any) first order variation of $u$.
\end{definition}

The reason why we introduce this notion will become apparent in the next section. Roughly speaking, the first order variations of the Chern curvature $\Omega$ entering the evolution equation~\eqref{eq:dt_Omega} ``do not affect'' the preservation of positivity of $\Omega$ along the HCF. Below we are interested in the evolution equation for $\Omega$ only up to the first order variation of $\Omega$.

\begin{example}
Terms
$g^{p\bar s}(S_{p\bar j}+\frac{1}{2}Q_{p\bar j})\Omega_{i\bar i j\bar s}$, 
$g^{p\bar s}S_{p\bar i}\Omega_{i\bar sj\bar j}$ and $\frac{1}{2}g^{p\bar s}Q_{j\bar s}\Omega_{i\bar i p\bar j}$ in~\eqref{eq:dt_Omega}
are first-order variations of~$\Omega$.
\end{example}

\begin{proposition}
The difference between $g^{m\bar n}\n_m\n_{\bar n}\Omega_{i\bar i j\bar j}$ and the real Chern Laplacian $\Delta=\frac{1}{2}(\n_m \n_{\bar n} + \n_{\bar n}\n_m) \Omega_{i\bar i j\bar j}$ is a first order variation of $\Omega$.
\end{proposition}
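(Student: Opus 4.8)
The plan is to compute the difference directly and identify it as a contraction of the Chern curvature with $\Omega$. Since
\[
g^{m\bar n}\n_m\n_{\bar n}\Omega_{i\bar i j\bar j}-\tfrac12 g^{m\bar n}\bigl(\n_m\n_{\bar n}+\n_{\bar n}\n_m\bigr)\Omega_{i\bar i j\bar j}=\tfrac12 g^{m\bar n}\bigl(\n_m\n_{\bar n}-\n_{\bar n}\n_m\bigr)\Omega_{i\bar i j\bar j},
\]
it suffices to show that the right-hand side is a first order variation of $\Omega$. I would use that the coordinate vector fields $\pd/\pd z_m$ and $\pd/\pd\bar z_n$ commute, so that $\n_m\n_{\bar n}-\n_{\bar n}\n_m$ equals the curvature endomorphism $\Omega(\pd/\pd z_m,\pd/\pd\bar z_n)$ acting on the bundle $(T_\C^*M)^{\otimes 4}$ of which $\Omega_{i\bar i j\bar j}$ is a section; in particular this operator is of order zero, i.e.\ no covariant derivatives of $\Omega$ appear.

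Next I would expand the action of this endomorphism slot by slot. Acting on the first ($(1,0)$-cotangent) index it contracts $-\Omega_{m\bar n i}{}^{p}$ into $\Omega_{p\bar i j\bar j}$; on the second ($(0,1)$-cotangent) index it contracts the corresponding block of the curvature of the Chern connection on $T^{0,1}M$ (the conjugate of the one on $T^{1,0}M$) into $\Omega_{i\bar s j\bar j}$; and analogously for the third and fourth indices. Contracting with $g^{m\bar n}$ turns each coefficient into a tensor built from the second Chern--Ricci curvature, e.g.\ $g^{m\bar n}\Omega_{m\bar n i}{}^{p}=g^{p\bar l}S_{i\bar l}$, and likewise for the conjugate blocks. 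Thus the right-hand side above is a sum of four terms of the shape $A^p_i\Omega_{p\bar i j\bar j}$, $B^{\bar s}_{\bar i}\Omega_{i\bar s j\bar j}$, $C^p_j\Omega_{i\bar i p\bar j}$, $D^{\bar s}_{\bar j}\Omega_{i\bar i j\bar s}$ with matrices $A,B,C,D$ constructed from $S$ and $g$; comparing with the definition of $F_1(\Omega)$ (take these matrices and $a=b=0$) finishes the argument. In fact the difference turns out to be a zeroth order variation.

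I do not expect a genuine obstacle here. The only point needing care is the bookkeeping for how the curvature endomorphism acts on the $(0,1)$-type cotangent slots (the sign conventions in $\Omega(X,Y)Z=(\n_X\n_Y-\n_Y\n_X-\n_{[X,Y]})Z$ and the fact that on $T^{0,1}M$ the Chern curvature is the conjugate of the one on $T^{1,0}M$), but none of this affects the structural conclusion, since any tensor of the indicated index shape contracted against a single index of $\Omega$ automatically qualifies as $F_1(\Omega)$. In particular, unlike the computation of Proposition~\ref{prop:omega_evolution}, no Bianchi identity is needed for this proposition.
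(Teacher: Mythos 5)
Your proposal is correct and takes essentially the same route as the paper: both identify the difference as $\tfrac12 g^{m\bar n}[\n_m,\n_{\bar n}]\Omega_{i\bar ij\bar j}$, a zeroth-order curvature endomorphism acting slot by slot, whose contraction against a single index of $\Omega$ is by definition an $F_1(\Omega)$. The only (inessential) extra step you add is rewriting the coefficients in terms of the second Chern--Ricci tensor $S$, which the paper leaves implicit.
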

\begin{proof}
The difference between operators $g^{m\bar n}\n_m\n_{\bar n}$ and $\frac{1}{2}(\n_m \n_{\bar n} + \n_{\bar n}\n_m)$ is the zero-order operator, given by the commutator $\frac{1}{2}g^{m\bar n}[\n_m,\n_{\bar n}]$. Clearly, the result of the action of $\frac{1}{2}g^{m\bar n}[\n_m,\n_{\bar n}]$ on $\Omega$
\[
-\frac{1}{2}g^{m\bar n}(
		\Omega_{m\bar n i}^p\Omega_{p\bar i j\bar j}+
		\Omega_{m\bar n \bar i}^{\bar s}\Omega_{i \bar s j\bar j}+
		\Omega_{m\bar n j}^p\Omega_{i\bar i p\bar j}+
		\Omega_{m\bar n \bar j}^{\bar s}\Omega_{i \bar i j\bar s}
	)
\]
is a first-order variation of $\Omega$.
\end{proof}

Using this proposition we can abbreviate equation~\eqref{eq:dt_Omega} as follows

\begin{equation}\label{eq:dt_Omega_mod_1st_order}
\begin{split}
\frac{d}{dt}\Omega_{i\bar i j\bar j}&=
	\Delta \Omega_{i\bar i j \bar j} + g^{m\bar n}\Bigl(
		T_{\bar n\bar i}^{\bar r}\n_m\Omega_{i \bar r j \bar j}+
		T_{m i}^q \n_{\bar n}\Omega_{q \bar i j \bar j}\\&+
		T_{m i}^q T_{\bar n\bar i}^{\bar r} \Omega_{q \bar r j \bar j}+
		g^{p\bar s}(
			T_{pm\bar j}\n_{\bar n}\Omega_{i\bar i j\bar s}+
			T_{\bar s\bar n j}\n_m\Omega_{i\bar i p \bar j}\\&+
			T_{pm\bar j}T^{\bar r}_{\bar n\bar i}\Omega_{i\bar r j\bar s}+
			T_{\bar s\bar n j}T^q_{mi}\Omega_{q \bar i p \bar j}+
			g^{q\bar r}T_{\bar s\bar n j}T_{qm\bar j}\Omega_{i\bar i p \bar r}
		)
	\Bigr)\\&+
	\frac{1}{2}|\n_i T_{\cdot \cdot \bar j}|^2+
	g^{m\bar n}g^{p\bar s}(
		\Omega_{i \bar i m \bar s} \Omega_{p\bar n j\bar j}+
		\Omega_{m \bar i j \bar s} \Omega_{i\bar n p\bar j}-
		\Omega_{m \bar i p \bar j} \Omega_{i\bar s j\bar n}
	)\\&+
	F_1(\Omega)_{i\bar i j\bar j},
\end{split}
\end{equation}
where $F_1(\Omega)$ is a first order variation of $\Omega$.

\subsection{Torsion-twisted connection}

The evolution equation of the Chern curvature in the form \eqref{eq:dt_Omega_mod_1st_order} involves many terms and it is difficult to analyze it directly. To simplify the equation and underline some of its ``positivity'' properties we introduce \emph{torsion-twisted} connection $\n^T$ on the space of curvature tensors and use its Laplacian to rewrite the evolution equation. Specifically, the 8 terms in the brackets in~\eqref{eq:dt_Omega_mod_1st_order} will be `absorbed' by the Laplacian of a \emph{torsion-twisted} connection on the space of curvature tensors.

\begin{definition}[Torsion-twisted connections]\label{def:torsion_twisted_connection}
We define $\n^1, \n^2$ to be two \emph{torsion-twisted} connections on $TM$ given by the following identities
\begin{equation}\label{eq:torsion_twisted_connection}
\begin{split}
&{\n^1}_X Y=\n_X Y - T(X,Y),\\
&\n^2_X Y=\n_X Y+g(Y,T(X,\cdot))^\#,
\end{split}
\end{equation}
where $X, Y, Z$ are sections of $T M$, $\n$ is the Chern connection and $\#\colon T^*M\to TM$ is the isomorphism induced by $g$.	
Equivalently, in the coordinates for a vector $\xi=\xi^p \frac{\pd}{\pd z^p}$ one has
\begin{equation}
\begin{split}
&\n^1_i \xi^p=\n_i\xi^p - T^p_{ij}\xi^j, \quad
\n^1_{\bar i}\xi^p=\n_{\bar i}\xi^p\\
&\n^2_i \xi^p=\n_i\xi^p, \quad
\n^2_{\bar i}\xi^p=\n_{\bar i}\xi^p + g^{p\bar s}T_{\bar i\bar s j}\xi^j.
\end{split}
\end{equation}
%As usual, we extend $\n^1$ and $\n^2$ to connections on $T^{0,1}M$ via conjugations and to all vector bundles associated with $T^{1,0}M$ via Leibniz rule.
\end{definition}

\begin{remark}
It is easy to check that $g$ considered as a section of $T^*M\otimes T^*M$ is parallel with respect to the connection $\n_1\otimes\n_2$, i.e.,
for any vector fields $X,Y,Z\in \Gamma(T M)$ we have
\[
X\!\cdot\!g(Y,Z)=g(\n^1_X Y, Z) + g(Y,\n^2_{X} Z).
\]
In other words, $\n^2$ is \emph{dual conjugate} to ${\n^1}$ via $g$.
\end{remark}

\begin{definition}[Torsion-twisted connection on the space of curvature tensors]
Now let us define the torsion-twisted connection $\n^T$ on the space of curvature tensors $\Lambda^{1,0}M\otimes\Lambda^{0,1}M\otimes\Lambda^{1,0}M\otimes\Lambda^{0,1}M$.
\[
\n^T:=\n^1\otimes\bar{\n^1}\otimes\n^2\otimes\bar{\n^2}.
\]
\end{definition}

The use of the torsion-twisted connection allows to simplify significantly the evolution equation~\eqref{eq:dt_Omega_mod_1st_order}.

\begin{lemma}\label{lm:torsion_twisted_laplacian}
Let $\Delta^T=\frac{1}{2}g^{m\bar n}(\n_m^T\n_{\bar n}^T+\n_{\bar n}^T\n_m^T)$ be the Laplacian of the torsion-twisted connection and let $u$ be a curvature-type tensor. Then the following identity holds.
\begin{equation}
\begin{split}
\Delta^T u_{i\bar i j\bar j}&=
	\Delta u_{i\bar i j \bar j} + g^{m\bar n}\Bigl(
		T_{\bar n\bar i}^{\bar r}\n_m u_{i \bar r j \bar j}+
		T_{m i}^q \n_{\bar n} u_{q \bar i j \bar j}\\&+
		T_{m i}^q T_{\bar n\bar i}^{\bar r} u_{q \bar r j \bar j}+
	g^{p\bar s}(
				T_{pm\bar j}\n_{\bar n}u_{i\bar i j\bar s}+
				T_{\bar s\bar n j}\n_m u_{i\bar i p \bar j}\\&+
				T_{pm\bar j}T^{\bar r}_{\bar n\bar i}u_{i\bar r j\bar s}+
				T_{\bar s\bar n j}T^q_{mi}u_{q \bar i p \bar j}+
				g^{q\bar r}T_{\bar s\bar n j}T_{qm\bar j}u_{i\bar i p \bar r}
			)
	\Bigr)\\&+
	F_1(u)_{i\bar i j\bar j},
\end{split}
\end{equation}
where $F_1(u)$ is a first order variation of $u$.
\end{lemma}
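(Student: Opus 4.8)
The plan is to compute $\Delta^T u$ directly from the definition $\n^T=\n^1\otimes\bar{\n^1}\otimes\n^2\otimes\bar{\n^2}$ and subtract the ordinary Chern Laplacian $\Delta u=\tfrac12 g^{m\bar n}(\n_m\n_{\bar n}+\n_{\bar n}\n_m)u$. Since $\n^T$ and $\n$ differ by a tensor (the torsion-twisting terms in Definition~\ref{def:torsion_twisted_connection}), the difference $\n^T u-\n u$ is algebraic in $u$ and linear in $T$, and so $\Delta^T u-\Delta u$ will be a sum of terms that are either (i) $T$ times one covariant derivative of $u$, or (ii) $T\cdot T$ times $u$, or (iii) $(\n T)$ times $u$. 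Terms of the last kind, being zero-th order in $u$ with coefficient a first-order derivative of torsion, are by definition absorbed into $F_1(u)$; likewise any term where a $\nabla T$ gets contracted away. So the real content is to track the first two kinds of terms and check they match the eight bracketed terms in the statement.

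Concretely, I would first record the difference operator on a single tensor slot. Writing $\n^T_m=\n_m-\mathcal A_m$ acting slot-by-slot, where on the two holomorphic slots $\mathcal A_m$ contributes $T^{\bar r}_{m\bar\cdot}$-type pieces via $\bar{\n^1}$ and on the two $(0,1)$-slots nothing (since $\n^1_m$ is untwisted on holomorphic indices and $\n^2_m=\n_m$), and similarly $\n^T_{\bar n}=\n_{\bar n}-\mathcal B_{\bar n}$ with $\mathcal B_{\bar n}$ supported on the $(1,0)$-slots (from $\n^1_{\bar n}$, which is untwisted — wait, $\n^1_{\bar i}\xi^p=\n_{\bar i}\xi^p$) and on the $(0,1)$-slots from $\bar{\n^2}_{\bar n}$. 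So: $\n^1$ twists only the holomorphic-$z$-derivative of holomorphic indices, $\n^2$ twists only the $\bar z$-derivative of holomorphic indices; dualizing/conjugating, on a curvature tensor $u_{i\bar i j\bar j}$ the operator $\n^T_m$ differs from $\n_m$ exactly in the first index $i$ (a $\n^1$-slot, so twisted in the $z$-derivative: contributes $-T^q_{mi}$) and in the fourth index $\bar j$ (a $\bar{\n^2}$-slot, so twisted in the $z$-derivative: contributes $+T_{m\bar j}{}^{\bar s}$-type term), while $\n^T_{\bar n}$ differs from $\n_{\bar n}$ in the second index $\bar i$ (a $\bar{\n^1}$-slot, twisted in $\bar z$: $-T^{\bar r}_{\bar n\bar i}$) and the third index $j$ (a $\n^2$-slot, twisted in $\bar z$: $+T_{\bar n j}{}^{\cdot}$). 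I'd verify signs against \eqref{eq:torsion_twisted_connection} carefully, since that is where errors creep in.

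Then I expand $\tfrac12 g^{m\bar n}(\n^T_m\n^T_{\bar n}+\n^T_{\bar n}\n^T_m)u$ using the Leibniz rule. Each $\n^T$ acts either as $\n$ plus a correction; the cross terms $\n\cdot(\text{correction})$ and $(\text{correction})\cdot\n$ produce the $T\cdot\n u$ terms, the $(\text{correction})\cdot(\text{correction})$ terms produce the $T\cdot T\cdot u$ terms, and the pure $\n\cdot\n$ term is $\Delta u$. A term like $\n_m(T^{\bar r}_{\bar n\bar i}u_{i\bar r j\bar j})=(\n_m T^{\bar r}_{\bar n\bar i})u_{i\bar r j\bar j}+T^{\bar r}_{\bar n\bar i}\n_m u_{i\bar r j\bar j}$ splits into an $F_1(u)$-piece and one of the wanted terms. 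Collecting, I expect precisely the two single-slot-on-$i,\bar i$ terms $T^{\bar r}_{\bar n\bar i}\n_m u_{i\bar r j\bar j}$ and $T^q_{mi}\n_{\bar n}u_{q\bar i j\bar j}$ and the product $T^q_{mi}T^{\bar r}_{\bar n\bar i}u_{q\bar r j\bar j}$ from the $\n^1\otimes\bar{\n^1}$ factors, and analogously from the $\n^2\otimes\bar{\n^2}$ factors acting on $j,\bar j$ the five $g^{p\bar s}$-terms; cross-terms mixing an $(i,\bar i)$-correction with a $(j,\bar j)$-correction also appear but these are again $T\cdot T\cdot u$ with the right index structure (and symmetrizing in $m,\bar n$ handles ordering). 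The only genuine subtlety — and the step I expect to be the main obstacle — is bookkeeping: making sure every leftover term not listed in the statement (in particular all $(\n T)\cdot u$ terms, the commutator-type terms from reordering $\n_m\n_{\bar n}$ versus $\n_{\bar n}\n_m$ inside the corrections, and terms where $\bar{\n^2}$ raises an index that then gets contracted with $g$) genuinely has the form $F_1(u)$, i.e.\ is a contraction of $u$ with a matrix/vector coefficient built from $g,T,\n T$, rather than something with two derivatives on $u$. Comparing the resulting expression termwise with the bracketed terms of \eqref{eq:dt_Omega_mod_1st_order} then finishes the proof, and this comparison is exactly why the definition of $\n^T$ was engineered the way it was.
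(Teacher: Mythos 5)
Your approach is precisely the paper's: write $\n^T=\n+(\text{torsion correction})$ slot by slot, expand $\tfrac12 g^{m\bar n}(\n^T_m\n^T_{\bar n}+\n^T_{\bar n}\n^T_m)$ via Leibniz, and absorb all $(\n T)\!\cdot\! u$ pieces (and the $\n_m\n_{\bar n}$ vs.~$\Delta$ discrepancy) into $F_1(u)$. One small accounting remark: the two cross-terms you flag ($\bar j\!\times\!\bar i$ and $j\!\times\! i$ correction products) are not extra to the five $g^{p\bar s}$-terms but are two of them; only the $g^{q\bar r}T_{\bar s\bar n j}T_{qm\bar j}u_{i\bar i p\bar r}$ term is the pure $j\!\times\!\bar j$ product, so the $g^{p\bar s}$ bracket is two $T\!\cdot\!\n u$ plus one pure plus two cross, matching the four $T\!\cdot\! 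T$ and four $T\!\cdot\!\n u$ products your slot analysis predicts.
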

\begin{proof}
By the definition of $\n^T$ we have
\begin{equation}
\begin{split}
\n^T_m u_{i \bar i j\bar j}=
	\n_m u_{i \bar i j\bar j}+
	T_{mi}^q u_{q \bar i j\bar j}-
	g^{q\bar r}T_{mq\bar j}u_{i \bar i j\bar r},\\
\n^T_{\bar n}u_{i \bar i j\bar j}=
	\n_{\bar n} u_{i \bar i j\bar j}+
	T_{\bar n\bar i}^{\bar r} u_{i \bar r j\bar j}-
	g^{q\bar r} T_{\bar n\bar r j}u_{i \bar i q\bar j}.
\end{split}
\end{equation}
Next we compute
\begin{equation}
\begin{split}
\n^T_{\bar n}\n^T_m u_{i \bar i j\bar j}=&
	\n^T_{\bar n}\bigl(
		\n_m u_{i \bar i j\bar j}+
		T_{m i}^q u_{q \bar i j\bar j}-
		g^{q\bar r}T_{mq\bar j}u_{i \bar i j\bar r}
	\bigr)\\=&
	\n_{\bar n} \n_m u_{i \bar i j\bar j}+
		T_{\bar n\bar i}^{\bar r} \n_m u_{i \bar r j\bar j}-
		g^{q\bar r} T_{\bar n\bar r j}\n_m u_{i \bar i q\bar j}\\&+
		\n^T_{\bar n}(T_{m i}^q u_{q \bar i j\bar j})-
		\n^T_{\bar n}(g^{q\bar r}T_{mq\bar j}u_{i \bar i j\bar r})\\=&
	\n_{\bar n} \n_m u_{i \bar i j\bar j}+
	T_{\bar n\bar i}^{\bar r} \n_m u_{i \bar r j\bar j}-
	g^{q\bar r} T_{\bar n\bar r j}\n_m u_{i \bar i q\bar j}\\&+
	T_{mi}^q \n_{\bar n}u_{q\bar ij\bar j}+
	\n_{\bar n} T_{mi}^q u_{q\bar i j\bar j}+
	T_{mi}^qT_{\bar n\bar i}^{\bar r} u_{q\bar rj\bar j}-
	g^{p\bar s}T_{\bar n\bar s j} T_{m i}^q u_{q\bar i p\bar j}\\&-
	g^{q\bar r}\bigl(
		T_{mq\bar j}\n_{\bar n}u_{i\bar i j\bar r} +
		\n_{\bar n}T_{mq\bar j} u_{i\bar i j\bar r} +
		T_{\bar n\bar i}^{\bar s}T_{mq\bar j}u_{i\bar s j\bar r} -
		g^{p\bar s}T_{\bar n\bar s j} T_{mq\bar j} u_{i\bar i p\bar r}
	\bigr).
\end{split}
\end{equation}
Computation of $\n^T_m\n_{\bar n}^Tu_{i\bar ij\bar j}$ is analogous. It remains to use the fact that the torsion $T$ is antisymmetric in its two first arguments and notice that $\n_{\bar n}T_{mq\bar j} u_{i\bar i j\bar r}, \n_{\bar n} T_{mi}^q u_{q\bar i j\bar j}$ are first-order variations of $u$. After contraction with $g^{m\bar n}$ we get the desired formula.
\end{proof}

With the use of this lemma we can rewrite the evolution equation~\eqref{eq:dt_Omega} as follows.

\begin{proposition}\label{prop:dt_Omega_torsion_twisted}
Assume that $g(t)$ solves the HCF~\eqref{eq:HCF} on $[0;\tau)$. Then the tensor $\Omega_{i\bar i j\bar j}(t)$ evolves according to the equation
\begin{equation}\label{eq:dt_Omega_torsion_twisted}
\begin{split}
\frac{d}{dt}\Omega_{i\bar i j\bar j}&=
	\Delta^T\Omega_{i\bar i j\bar j}+
	Q_2(\n T)_{i\bar i j\bar j}+F_2(\Omega)_{i\bar i j\bar j}+F_1(\Omega)_{i\bar i j\bar j},
\end{split}
\end{equation}
where $F_1(\Omega)$ is a first order variation of $\Omega$ and $F_2(\Omega)$ is the quadratic curvature term given by
\begin{equation}\label{eq:quadratic_curvature}
F_2(\Omega)_{i\bar i j\bar j}=g^{m\bar n}g^{p\bar s}(
		\Omega_{i \bar i m \bar s} \Omega_{p\bar n j\bar j}+
		\Omega_{m \bar i j \bar s} \Omega_{i\bar n p\bar j}-
		\Omega_{m \bar i p \bar j} \Omega_{i\bar s j\bar n}
	)
\end{equation}
and $Q_2(\n T)$ is the partial square norm of the covariant derivative of the torsion.
\begin{equation}\label{eq:Q_2_nT}
Q_2(\n T)_{i\bar i j\bar j}=
\frac{1}{2}|\n_i T_{\cdot \cdot \bar j}|^2=
\frac{1}{2}g^{p\bar s}g^{m\bar n}\n_iT_{pm\bar j}\n_{\bar i}T_{\bar s\bar n j}.
\end{equation}
\end{proposition}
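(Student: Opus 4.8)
The plan is to read off the claim by combining two facts already at our disposal: the abbreviated evolution equation~\eqref{eq:dt_Omega_mod_1st_order} and the algebraic identity for the torsion-twisted Laplacian from Lemma~\ref{lm:torsion_twisted_laplacian}. Recall that the passage from~\eqref{eq:dt_Omega} to~\eqref{eq:dt_Omega_mod_1st_order} only modified the equation by a first order variation of $\Omega$ (this was the content of the preceding proposition, which replaced $g^{m\bar n}\n_m\n_{\bar n}$ by the real Chern Laplacian $\Delta$), so it suffices to rewrite the right-hand side of~\eqref{eq:dt_Omega_mod_1st_order}.

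First I would apply Lemma~\ref{lm:torsion_twisted_laplacian} with $u=\Omega$. This expresses $\Delta^T\Omega_{i\bar i j\bar j}$ as the sum of $\Delta\Omega_{i\bar i j\bar j}$, the same eight bracketed torsion terms $g^{m\bar n}\bigl(T^{\bar r}_{\bar n\bar i}\n_m\Omega_{i\bar r j\bar j}+\dots+g^{q\bar r}T_{\bar s\bar n j}T_{qm\bar j}\Omega_{i\bar i p\bar r}\bigr)$ that appear in~\eqref{eq:dt_Omega_mod_1st_order}, and a first order variation $F_1(\Omega)$. Equivalently, the block consisting of $\Delta\Omega$ together with the eight torsion terms in~\eqref{eq:dt_Omega_mod_1st_order} equals $\Delta^T\Omega_{i\bar i j\bar j}$ up to an $F_1(\Omega)$ term.

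Substituting this into~\eqref{eq:dt_Omega_mod_1st_order}, the diffusion part collapses to $\Delta^T\Omega_{i\bar i j\bar j}$; the extra $F_1(\Omega)$ produced by the substitution is absorbed into the $F_1(\Omega)$ already present (a sum of two first order variations is again a first order variation); the term $\frac12|\n_i T_{\cdot\cdot\bar j}|^2$ is by the definition~\eqref{eq:Q_2_nT} exactly $Q_2(\n T)_{i\bar i j\bar j}$; and the remaining quadratic expression $g^{m\bar n}g^{p\bar s}(\Omega_{i\bar i m\bar s}\Omega_{p\bar n j\bar j}+\Omega_{m\bar i j\bar s}\Omega_{i\bar n p\bar j}-\Omega_{m\bar i p\bar j}\Omega_{i\bar s j\bar n})$ is $F_2(\Omega)_{i\bar i j\bar j}$ from~\eqref{eq:quadratic_curvature}. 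Collecting these terms gives precisely~\eqref{eq:dt_Omega_torsion_twisted}.

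Given the two inputs, there is no substantial obstacle here; the only point requiring attention is index bookkeeping, namely verifying that the eight torsion terms furnished by Lemma~\ref{lm:torsion_twisted_laplacian} coincide term-by-term — after renaming contracted dummy indices such as $\bar r$ and $\bar s$ — with those appearing in~\eqref{eq:dt_Omega_mod_1st_order}, so that the cancellation is exact and nothing spills over beyond $F_1(\Omega)$. Since both lists originate from the same Bianchi-identity manipulations, this is a routine comparison rather than a fresh calculation.
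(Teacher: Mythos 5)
Your proposal is correct and is exactly the route the paper takes: Proposition~\ref{prop:dt_Omega_torsion_twisted} is stated immediately after Lemma~\ref{lm:torsion_twisted_laplacian} with the remark ``With the use of this lemma we can rewrite the evolution equation,'' i.e., one substitutes the lemma (with $u=\Omega$) into~\eqref{eq:dt_Omega_mod_1st_order}, whose eight bracketed torsion terms coincide term-by-term with those in the lemma, and absorbs the extra $F_1(\Omega)$ into the one already present.
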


\begin{remark}
The term $F_2(\Omega)$ coincides with the quadratic term, computed by Hamilton~\cite{ha-86} in the case of Ricci flow. If the underlying manifold $M$ is K\"ahler then the torsion $T$ is zero, so $\Delta^T=\Delta$, and the equation~\eqref{eq:dt_Omega_torsion_twisted} recovers the original evolution of the curvature under the Ricci flow~\cite{ha-86,mo-88}.
\end{remark}

\section{Properties of non-negative curvature-type tensors}\label{sec:positivity_properties}
Throughout this section $u$ is a curvature-type tensor field on a Hermitian complex manifold $(M, g, J)$. Moreover, we assume that $u$ is non-negative (see Definition~\ref{def:positive}). To distinguish a tensor field $u$ from its value at a particular point $x\in M$, we will put the corresponding subscript $u_x:=u|_{T_xM}$.
In this section we study the structure of the zero locus of $u$. We start with the following elementary observation.
\begin{proposition}\label{prop:vanish}
Let $u$ be a tensor field of curvature type. Assume that $u\ge 0$ and $u_x(\xi, \bar \xi, \eta, \bar \eta)=0$ for $\xi,\eta\in T_x^{1,0}M$. Then for any $\zeta\in T_x^{1,0}M$ tensor $u$ satisfies
\begin{equation}
\begin{split}
&u_x(\xi, \bar \zeta, \eta, \bar \eta)=u_x(\zeta, \bar \xi, \eta, \bar \eta)=0;\\
&u_x(\xi, \bar \xi, \eta, \bar \zeta)=u_x(\xi, \bar \xi, \zeta, \bar \eta)=0;\\ 
&(\n u)_x(\xi, \bar \xi, \eta, \bar \eta)=0.
\end{split}
\end{equation}
In particular, for any first order variation of $u$ 
\[
F_1(u)(\xi,\bar{\xi},\eta,\bar{\eta})=0.
\]
\end{proposition}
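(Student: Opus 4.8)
The statement is a pointwise fact at $x$ about a non-negative curvature-type tensor $u$ vanishing on a decomposable argument $(\xi,\eta)$. The whole thing rests on one elementary idea: if a quadratic (Hermitian) form is $\ge 0$ and vanishes at a vector, then that vector is in the kernel, so the associated sesquilinear form vanishes when paired with it. I would isolate the two directions (the ``$\xi$-slot'' and the ``$\eta$-slot'') and treat each by choosing an auxiliary one-variable family and differentiating.

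\textbf{Step 1: the $\eta$-slot vanishing.} Fix $\xi$ and define the Hermitian form $H_1(\eta_1,\bar\eta_2):=u_x(\xi,\bar\xi,\eta_1,\bar\eta_2)$ on $T_x^{1,0}M$. By the symmetries in Proposition~\ref{prop:curvature_symmetries} (specifically $u(X,Y,Z,W)=-u(X,Y,W,Z)$ together with $J$-invariance) this is a Hermitian form, and $u\ge 0$ says $H_1(\eta,\bar\eta)\ge 0$ for all $\eta$. Since $H_1(\eta,\bar\eta)=0$, the vector $\eta$ lies in the kernel of $H_1$, hence $H_1(\eta,\bar\zeta)=H_1(\zeta,\bar\eta)=0$ for every $\zeta$. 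Concretely I would make this rigorous by expanding $0\le u_x(\xi,\bar\xi,\eta+s\zeta,\overline{\eta+s\zeta})$ for $s\in\C$ and reading off the linear-in-$s$ coefficient, which must vanish. This gives the second line, $u_x(\xi,\bar\xi,\eta,\bar\zeta)=u_x(\xi,\bar\xi,\zeta,\bar\eta)=0$.

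\textbf{Step 2: the $\xi$-slot vanishing.} Symmetrically, fix $\eta$ and consider $H_2(\xi_1,\bar\xi_2):=u_x(\xi_1,\bar\xi_2,\eta,\bar\eta)$, again Hermitian and $\ge 0$ by hypothesis, vanishing at $\xi$. The same polarization argument ($0\le u_x(\xi+s\zeta,\overline{\xi+s\zeta},\eta,\bar\eta)$, kill the first-order term) yields the first line, $u_x(\xi,\bar\zeta,\eta,\bar\eta)=u_x(\zeta,\bar\xi,\eta,\bar\eta)=0$.

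\textbf{Step 3: the derivative vanishing and the $F_1$ consequence.} For $(\n u)_x(\xi,\bar\xi,\eta,\bar\eta)=0$ I would extend $\xi,\eta$ to local vector fields near $x$ and consider the real function $f:=u(\xi,\bar\xi,\eta,\bar\eta)$. The point $x$ is a global minimum of $f$ in the sense that $f\ge 0=f(x)$, so $df_x=0$. Now $df_x = (\n u)_x(\xi,\bar\xi,\eta,\bar\eta)$ \emph{plus} terms in which $\n$ hits one of the four vector-field slots; each such term is of the form $u_x(\n_\bullet\xi,\bar\xi,\eta,\bar\eta)$ (or a conjugate/$\eta$-analogue), and these all vanish by Steps~1--2. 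Hence $(\n u)_x(\xi,\bar\xi,\eta,\bar\eta)=0$. Finally, the $F_1(u)$ statement is immediate: writing out $F_1(u)(\xi,\bar\xi,\eta,\bar\eta)$ using the definition, the matrix terms $A^p_i u_{p\bar i j\bar j}$ etc.\ are exactly the slot-substitution expressions killed in Steps~1--2, and the $a^p\n_p u$, $b^{\bar s}\n_{\bar s}u$ terms are components of $(\n u)_x(\xi,\bar\xi,\eta,\bar\eta)$, zero by the previous step.

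\textbf{Main obstacle.} None of the steps is deep, but the one place to be careful is Step~3: making precise that $df_x = (\n u)_x(\xi,\bar\xi,\eta,\bar\eta) + (\text{slot terms})$ and that every slot term is one of the already-vanishing quantities — one must check that $\n_X\xi$ at $x$ can be an \emph{arbitrary} $(1,0)$-vector (true, since we are free to choose the extension of $\xi$), so that ``$u_x$ with $\xi$ replaced by $\n_X\xi$ in one slot'' is genuinely covered by Steps~1--2 rather than needing a new argument. Equivalently, and more cleanly, one observes that $\n u$ is again a section of the bundle of curvature-type tensors and $(\n_X u)_x$ is a curvature-type tensor for each fixed real $X$, so one may instead argue directly: the scalar $t\mapsto u_{\gamma(t)}(\xi(t),\overline{\xi(t)},\eta(t),\overline{\eta(t)})$ along a curve $\gamma$ with $\gamma(0)=x$, using $\n$-parallel extensions $\xi(t),\eta(t)$, has a minimum at $t=0$, so its derivative $(\n_{\gamma'}u)_x(\xi,\bar\xi,\eta,\bar\eta)$ vanishes; since $\gamma'(0)$ is arbitrary this gives $(\n u)_x(\xi,\bar\xi,\eta,\bar\eta)=0$ with no slot-term bookkeeping at all. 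I would present this parallel-transport version as the clean proof.
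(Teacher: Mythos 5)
Your proposal is correct and follows essentially the same route as the paper: first-order variation in the fibre variables (polarization of the non-negative Hermitian forms $H_1,H_2$) to kill the slot terms, and first-order variation in the base point to kill $(\n u)_x(\xi,\bar\xi,\eta,\bar\eta)$. The paper phrases Steps~1--2 via $\Phi(s)=u_x(\xi+s\zeta,\overline{\xi+s\zeta},\eta,\bar\eta)$ with real $s$ and then uses ``$\zeta$ arbitrary'' to recover the full complex vanishing, while you phrase it as a kernel statement for a positive semidefinite Hermitian form --- the same calculation; and your parallel-transport rephrasing of Step~3 is a cleaner, more explicit version of the paper's one-line remark that the variation in $x$ vanishes at a minimum.
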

\begin{proof}
Let us fix $\zeta\in T_x^{1,0}M$. Consider the first variation of the expression $u_x(\xi, \bar \xi, \eta, \bar \eta)$ along the fibre, specifically consider the function $\Phi\colon \R\to \R$ given by the formula
\[
\Phi(s)=u_x(\xi+s\zeta, \bar{\xi + s\zeta}, \eta, \bar \eta).
\]
Since $u$ is Griffiths non-negative, $\Phi(s)\ge 0$. Hence $\Phi$ attains its minimum at~$0$, so
\[
0=\frac{d\Phi(s)}{ds}\bigg|_{s=0}=
u_x(\zeta,\bar \xi, \eta, \bar \eta)+u_x(\xi,\bar{ \zeta}, \eta, \bar \eta)=
2{\rm Re}\bigl(u_x(\xi,\bar \zeta, \eta, \bar \eta)\bigr).
\]
Since $\zeta$ is arbitrary, it implies that $u_x(\xi,\bar \zeta, \eta, \bar \eta)=0$ for any $\zeta\in T_x^{1,0}M$.

The vanishing of the second term is proved similarly by considering the variation $\Phi(s)=u_x(\xi, \bar \xi, \eta+s\zeta, \bar{\eta + s \zeta}).$ The third identity follows from the vanishing of the variation of $u_x(\xi,\bar{\xi},\eta,\bar{\eta})$ in $x\in M$.
\end{proof}
We have used the first order variation of $u_x(\xi,\bar\xi,\eta,\bar\eta)$ in its vector arguments to prove the vanishing results in Proposition~\ref{prop:vanish}. The next proposition uses the second variation in the vector arguments of $u_x(\xi,\bar\xi,\eta,\bar\eta)$ to prove a more delicate inequality. 
\begin{proposition}\label{prop:var_fiber}
Let $u_x$ be a tensor of curvature type. Let $e_1,\dots,e_n\in T_x^{1,0}M$ be an orthonormal basis. Then for $\xi,\eta\in T_x^{1,0}M$ there exists a positive constant $K=K(\xi,\eta)$, such that tensor $u_x$ satisfies
\begin{equation}\label{eq:quadratic_ineq}
\begin{split}
\sum_{i,j}
\bigl(
	u_x(\xi,\bar\xi,e_i,\bar e_j)u_x(e_j,\bar e_i,\eta,\bar\eta)-
	u_x(\xi,\bar e_i,\eta, \bar e_j)u_x(e_j,\bar\xi,e_i, \bar\eta)
\bigr)\\
\ge K\cdot\!\!\!\! \inf_{\substack{\nu,\zeta\in T_x^{1,0}M\\ |\nu|^2+|\zeta|^2\le 1}} {\delta^2} u_x(\nu,\zeta),
\end{split}
\end{equation}
where
\[
{\delta^2} u_x(\nu,\zeta):=\frac{d^2}{ds^2}u_x(\xi+s\nu, \bar \xi+s\bar\nu, \eta+s\zeta, \bar \eta+s\bar\zeta)|_{s=0}
\]
is the second variation of $u_x$ in its vector arguments.
\end{proposition}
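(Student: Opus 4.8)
The plan is to expand the left-hand side of~\eqref{eq:quadratic_ineq} into a finite sum of second-derivative expressions of $u_x$ and then bound each of them below. First I would write out the second variation explicitly: since $u_x$ is quadrilinear in the pairs $(\xi,\bar\xi)$ and $(\eta,\bar\eta)$, the function $s\mapsto u_x(\xi+s\nu,\bar\xi+s\bar\nu,\eta+s\zeta,\bar\eta+s\bar\zeta)$ is a polynomial in $s$, and its second derivative at $s=0$ is a sum of terms, each either of the form $u_x(\nu,\bar\nu,\eta,\bar\eta)$, $u_x(\xi,\bar\xi,\zeta,\bar\zeta)$, or a ``cross'' term like $u_x(\nu,\bar\xi,\zeta,\bar\eta)+u_x(\xi,\bar\nu,\eta,\bar\zeta)+\dots$ mixing $\nu$ and $\zeta$. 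Collecting, $\delta^2 u_x(\nu,\zeta)=2u_x(\nu,\bar\nu,\eta,\bar\eta)+2u_x(\xi,\bar\xi,\zeta,\bar\zeta)+4\,{\rm Re}\bigl(u_x(\nu,\bar\xi,\zeta,\bar\eta)+u_x(\nu,\bar\xi,\eta,\bar\zeta)+u_x(\xi,\bar\nu,\zeta,\bar\eta)\bigr)$ or a similar bilinear-in-$(\nu,\zeta)$ Hermitian form; the key point is that $\delta^2 u_x$ is a Hermitian quadratic form in the combined variable $(\nu,\zeta)\in T_x^{1,0}M\oplus T_x^{1,0}M$, so its infimum over the unit ball $|\nu|^2+|\zeta|^2\le 1$ is simply its smallest eigenvalue $\lambda_{\min}$ (which is $\le 0$ because the RHS has no lower-order correction).

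Next I would make a specific choice of test vectors. For fixed indices $i,j$, take $\nu = a e_i$ and $\zeta = b e_j$ (or, to produce the second summand, $\nu$ and $\zeta$ chosen so as to isolate the term $u_x(\xi,\bar e_i,\eta,\bar e_j)$), with parameters $a,b\in\C$ to be optimized. For such a choice the Hermitian form $\delta^2 u_x(\nu,\zeta)$ becomes a $2\times2$ (or small) Hermitian form in $(a,b)$ whose entries are precisely $u_x(e_i,\bar e_i,\eta,\bar\eta)$, $u_x(\xi,\bar\xi,e_j,\bar e_j)$, and cross terms involving $u_x(\xi,\bar e_i,\eta,\bar e_j)$. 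Since $\delta^2 u_x(\nu,\zeta)\ge\lambda_{\min}(|\nu|^2+|\zeta|^2)=\lambda_{\min}(|a|^2+|b|^2)$, choosing $a,b$ of unit modulus and with the right phase extracts a lower bound of the form $\text{(diagonal terms)} - |\text{cross term}| \ge \tfrac{1}{2}\lambda_{\min}$ or similar. Summing over all $i,j$ with a normalization $a=b=1/\sqrt{2n}$ so that $|\nu|^2+|\zeta|^2\le 1$, and noting that $\sum_{i,j}u_x(\xi,\bar\xi,e_i,\bar e_j)u_x(e_j,\bar e_i,\eta,\bar\eta)$ and $\sum_{i,j}u_x(\xi,\bar e_i,\eta,\bar e_j)u_x(e_j,\bar\xi,e_i,\bar\eta)$ are exactly the basis-independent traces appearing on the left of~\eqref{eq:quadratic_ineq}, one obtains the claimed inequality with $K$ an explicit constant depending only on $n$ and on $|\xi|,|\eta|$ (coming from the diagonal terms $u_x(e_i,\bar e_i,\eta,\bar\eta)$ etc., which are controlled since $u_x$ is bounded on the relevant compact set of vectors — but note $K$ must be allowed to depend on $u_x$ as well, or one uses homogeneity; the statement as written lets $K$ depend on $\xi,\eta$, and implicitly on $u_x$ at the point $x$).

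The main obstacle is bookkeeping the combinatorics of the cross terms correctly: one must verify that the particular combination of products in~\eqref{eq:quadratic_ineq} — note the \emph{minus} sign and the specific index pattern $u_x(\xi,\bar e_i,\eta,\bar e_j)u_x(e_j,\bar\xi,e_i,\bar\eta)$ rather than, say, $u_x(\xi,\bar e_i,\eta,\bar e_j)u_x(e_j,\bar\xi,\bar e_i,\eta)$ — is precisely what arises as the off-diagonal block of the Hermitian form $\delta^2 u_x$ when restricted to the test-vector family, using the symmetries of curvature-type tensors from Proposition~\ref{prop:curvature_symmetries} (in particular $\overline{u_x(\xi,\bar e_i,\eta,\bar e_j)}=u_x(e_i,\bar\xi,e_j,\bar\eta)$). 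Once the algebraic identity ``LHS of~\eqref{eq:quadratic_ineq} $=$ (sum over $i,j$ of a normalized $\delta^2 u_x$ evaluated on suitable unit test vectors) $+$ (manifestly non-negative remainder)'' is pinned down, the inequality is immediate from $\delta^2 u_x \ge \lambda_{\min}\,(|\nu|^2+|\zeta|^2)$ together with the definition of the infimum; the remaining work is purely a matter of collecting constants into $K$.
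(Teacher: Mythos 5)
Your sketch goes off course at the point where you claim an algebraic identity of the form ``LHS of~\eqref{eq:quadratic_ineq} $=$ (sum over $i,j$ of $\delta^2 u_x$ evaluated on test vectors) $+$ (manifestly non-negative remainder).'' No such identity can hold: for any fixed test vectors $\nu,\zeta$ the quantity $\delta^2 u_x(\nu,\zeta)$ is \emph{linear} in the components of $u_x$, whereas the left-hand side of~\eqref{eq:quadratic_ineq} is \emph{quadratic} in those components (it is a sum of products of two tensor entries). Replacing $u_x$ by $t\,u_x$ scales the left-hand side by $t^2$ while scaling any fixed combination $\sum c_k\,\delta^2 u_x(\nu_k,\zeta_k)$ only by $t$, so a decomposition of the type you propose, with a remainder that is even in $u_x$, is impossible. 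Concretely, choosing $\nu=a e_i$, $\zeta=b e_j$ and optimizing phases gives pointwise bounds of the shape $|B_{ij}|\leq\tfrac12(A_{ii}+C_{jj})+K_0$ for the matrices ${\bf A}_0=u_x(\xi,\bar\xi,\cdot,\bar\cdot)$, ${\bf B}_0=u_x(\xi,\bar\cdot,\eta,\bar\cdot)$, ${\bf C}_0=u_x(\cdot,\bar\cdot,\eta,\bar\eta)$, but summing these over $i,j$ does not produce the traces $\tr({\bf A}_0\bar{{\bf C}_0})$ or $\tr({\bf B}_0\bar{{\bf B}_0})$ appearing in~\eqref{eq:quadratic_ineq}.

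The missing ingredient is a genuinely nonlinear matrix inequality. The paper first forms a positive semidefinite \emph{real} quadratic form $Q_u$ on $\C^n\oplus\C^n\simeq\R^{4n}$ by adding $K_0(|\chi|^2+|\psi|^2)$ to a symmetrized version of $\delta^2 u_x$ (note that $\delta^2 u_x$ itself is not Hermitian in $(\nu,\zeta)$ because of the mixed cross term $u_x(\nu,\bar\xi,\eta,\bar\zeta)$, which is removed by averaging with $\delta^2 u_x(i\chi,-i\psi)$). It then invokes Lemma~\ref{lm:matrix_ineq}: for a positive semidefinite block matrix $\left[\begin{smallmatrix}{\bf A}&{\bf B}\\{\bf B}^T&{\bf C}\end{smallmatrix}\right]$ one has $\tr({\bf A}{\bf C})\geq\tr({\bf B}^2)$, proved by pairing it with a second positive semidefinite matrix obtained by a basis rotation and using $\tr({\bf M}_1{\bf M}_2)\geq 0$. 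Applying this trace inequality to $Q_u$ with blocks ${\bf A}_0+K_0 {\bf I}$, ${\bf B}_0$, ${\bf C}_0+K_0 {\bf I}$ and expanding is precisely what yields~\eqref{eq:quadratic_ineq} with $K=\tr{\bf A}_0+\tr{\bf C}_0+nK_0$. Without this lemma (or an equivalent Cauchy--Schwarz-type argument on the block matrix), the step from positivity of $\delta^2 u_x + K_0|\cdot|^2$ to the quadratic trace inequality simply does not follow.
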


\begin{proof}
First let us denote the infimum on the RHS of~\eqref{eq:quadratic_ineq} by $-K_0$. Since ${\delta^2} u_x(0,0)=0$, we have $K_0\ge 0$. Now we fix vectors $\chi,\psi\in T_x^{1,0}M$ and consider ${\delta^2} u_x(\chi,\psi)$. Clearly
\[
{\delta^2} u_x(\chi,\psi)\ge -K_0(g(\chi,\bar \chi)+g(\psi,\bar\psi)). 
\]
Hence the Hermitian form 
\[
Q_u(\chi\oplus\psi,\bar{\chi\oplus\psi}):=
\frac{1}{2}\bigl(
{\delta^2} u_x(\chi,\psi)+
{\delta^2} u_x(i\chi,-i\psi)
\bigr)+
K_0(g(\chi,\bar \chi)+g(\psi,\bar\psi))
\]
is positive semidefinite. Let us denote the matrices of quadratic forms $u_x(\xi, \bar \xi, \cdot, \bar\cdot)$, $u_x(\xi, \bar \cdot, \eta, \bar \cdot)$, $u_x(\cdot, \bar \cdot, \eta, \bar \eta)$ in the unitary orthonormal basis by ${\bf A}_0, {\bf B}_0, {\bf C}_0$ respectively. Then $Q_u$ satisfies
\begin{equation}\label{eq:Q_2nd_var}
Q_u(\chi\oplus \psi,\bar {\chi\oplus \psi})=
({\bf A}_0+K_0{\bf I}_n)(\psi,\bar \psi)+
2{\rm Re}{\bf B}_0(\chi,\psi)+
({\bf C}_0+K_0{\bf I}_n)(\chi,\bar \chi).
\end{equation}

We will need the following Lemma.
\begin{lemma}\label{lm:matrix_ineq}
Let ${\bf A}=(a_{ij}),{\bf B}=(b_{ij}),{\bf C}=(c_{ij})$ be real $n\times n$ matrices with ${\bf A}$ and $\bf C$~--- symmetric. Suppose that the real quadratic form
\[
Q(v\oplus w,{v\oplus w})={\bf A}(v, v)+2{\bf B}(v,w)+{\bf C}(w, w)
\]
on $\R^{2n}$ is positive semidefinite. Then we have
\[
{\rm tr} ({\bf AC})=\sum_{i,j}a_{ij}c_{ji}\ge
\sum_{i,j}b_{i j} b_{j i}={\rm tr}({\bf B}^2).
\]
\end{lemma}

\begin{proof}[Proof of the lemma]
Since $Q$ is a positive semidefinite quadratic form, its matrix in the standard basis $E_1,\dots,E_{2n}$ of $\R^{2n}$ is non-negative.
\[
{\bf M}_1=
\left[
\begin{matrix}
\bf A & \bf B \\
{\bf B}^T & \bf C
\end{matrix}
\right]\ge 0.
\]
Similarly the matrix of $Q$ in basis $E_{n+1},\dots, E_{2n},-E_1,\dots-E_n$ is non-negative.
\[
{\bf M}_2=
\left[
\begin{matrix}
\bf C & -{\bf B}^T \\
-{\bf B} & \bf A
\end{matrix}
\right]\ge 0.
\]
Trace of the product of two positive semidefinite matrices is non-negative, hence
\[
\tr({\bf M}_1{\bf M}_2)=2\sum_{i,j}(a_{ij}c_{ji}-b_{ij}b_{ji})\ge 0,
\]
which implies the required inequality. The lemma is proved.
\end{proof}
Now we consider a real positive semidefinite quadratic form on $\C^{n}\oplus\C^n\simeq \R^{4n}$ 
\[
Q(v\oplus w,\bar{v\oplus w})={\bf A}(v, \bar v)+2{\rm Re}{\bf B}(v,w)+{\bf C}(w, \bar w),
\]
where ${\bf A}=(a_{ij}),{\bf B}=(b_{ij}),{\bf C}=(c_{ij})$ are complex $n\times n$ matrices with $\bf A$, $\bf C$~--- Hermitian, i.e, $a_{ij}=\bar{a_{ji}}$, $c_{ij}=\bar{c_{ji}}$. Applying Lemma~\ref{lm:matrix_ineq} in this situation we get
\begin{equation}
\sum_{i,j} a_{ij}\bar{c_{ij}}\ge \sum_{i,j} b_{ij}\bar{b_{ji}}.
\end{equation}
In particular, for the form $Q_u$ in~\eqref{eq:Q_2nd_var} one gets
\[
{\rm tr}\bigl(({\bf A}_0+K_0{\bf I}_n)\bar{({\bf C}_0+K_0{\bf I}_n)}\bigr)\ge 
\tr ({\bf B}_0\bar{{\bf B}_0}),
\]
\[
{\rm tr}({\bf A}_0\bar{{\bf C}_0})-\tr ({\bf B}_0\bar{{\bf B}_0})
\ge -K_0({\rm tr} {\bf A}_0+{\rm tr} {\bf B}_0+nK_0).
\]
Setting $K={\rm tr} {\bf A}_0+{\rm tr} {\bf B}_0+nK_0$ and plugging the definitions of ${\bf A}_0,{\bf B}_0,{\bf C}_0$, we get the required inequality~\eqref{eq:quadratic_ineq}.
\end{proof}

Lemma~\ref{lm:matrix_ineq} was used by Cao~\cite[Lemma\,4.2]{ca-92} to deduce Harnack's inequality along the K\"ahler-Ricci flow. In the Hermitian setting one has to be careful, since $u=\Omega_{i\bar j k\bar l}$ is not symmetric in its first and third arguments, so the corresponding matrix $\bf B$ is not symmetric.

\begin{corollary}\label{cor:var_fiber}
Let $u_x$ be a tensor of curvature type. Assume that $u_x\ge 0$ and $u_x(\xi, \bar \xi, \eta, \bar \eta)=0$ for $\xi,\eta\in T_x^{1,0}M$. Let $e_1,\dots,e_n\in T_x^{1,0}M$ be an orthonormal basis. Then tensor $u_x$ satisfies
\begin{equation}
\sum_{i,j}
\bigl(
	u_x(\xi,\bar\xi,e_i,\bar e_j)u_x(e_j,\bar e_i,\eta,\bar\eta)-
	u_x(\xi,\bar e_i,\eta, \bar e_j)u_x(e_j,\bar\xi,e_i, \bar\eta)
\bigr)
\ge 0.
\end{equation}
\end{corollary}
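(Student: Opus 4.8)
The plan is to derive this directly from Proposition~\ref{prop:var_fiber} by analyzing the sign of the second variation term appearing on the right-hand side of inequality~\eqref{eq:quadratic_ineq}. The key observation is that since $u_x\ge 0$ everywhere along the fibre and $u_x(\xi,\bar\xi,\eta,\bar\eta)=0$, the point $(\xi,\eta)$ is a global minimum of the function $(\nu,\zeta)\mapsto u_x(\xi+\nu,\bar\xi+\bar\nu,\eta+\zeta,\bar\eta+\bar\zeta)$. At a minimum the Hessian is positive semidefinite, so in particular $\delta^2 u_x(\nu,\zeta)\ge 0$ for all $\nu,\zeta\in T_x^{1,0}M$.

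First I would write out $\delta^2 u_x(\nu,\zeta)$ explicitly. Expanding $u_x(\xi+s\nu,\bar\xi+s\bar\nu,\eta+s\zeta,\bar\eta+s\bar\zeta)$ and differentiating twice at $s=0$, the second-derivative collects all the terms of total degree $2$ in $(s\nu,s\bar\nu,s\zeta,s\bar\zeta)$. Using the symmetries of $u_x$, the "diagonal'' contributions are $2u_x(\nu,\bar\nu,\eta,\bar\eta)$, $2u_x(\xi,\bar\xi,\zeta,\bar\zeta)$, and cross terms of the shape $u_x(\xi,\bar\xi,\nu?,\dots)$ — but those vanish by Proposition~\ref{prop:vanish} since $u_x(\xi,\bar\xi,\eta,\bar\eta)=0$ forces $u_x(\xi,\bar\xi,\eta,\bar\zeta)=0$ etc. The genuinely non-trivial cross terms are the mixed ones coupling $\nu$ and $\zeta$, namely terms like $u_x(\nu,\bar\xi,\eta,\bar\zeta)$, $u_x(\xi,\bar\nu,\zeta,\bar\eta)$ and their conjugates. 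So $\delta^2 u_x(\nu,\zeta)\ge 0$ for all $\nu,\zeta$ is precisely the positive semidefiniteness of the Hermitian form on the right side of~\eqref{eq:quadratic_ineq} — in fact it says $-K_0\ge 0$ in the notation of the proof of Proposition~\ref{prop:var_fiber}, hence $K_0=0$.

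Once $K_0=0$ is established, Proposition~\ref{prop:var_fiber} gives immediately
\[
\sum_{i,j}\bigl(u_x(\xi,\bar\xi,e_i,\bar e_j)u_x(e_j,\bar e_i,\eta,\bar\eta)-u_x(\xi,\bar e_i,\eta,\bar e_j)u_x(e_j,\bar\xi,e_i,\bar\eta)\bigr)\ge K\cdot 0 = 0,
\]
which is exactly the claimed inequality. The main obstacle — really the only point requiring care — is the bookkeeping in the expansion of $\delta^2 u_x$: one must verify that all the would-be cross terms involving a bare pair $(\xi,\bar\xi)$ or $(\eta,\bar\eta)$ drop out via Proposition~\ref{prop:vanish}, so that the Hessian form genuinely equals the form $Q_u$ (with $K_0=0$) appearing in~\eqref{eq:Q_2nd_var}. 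This is a routine but slightly tedious use of the curvature-type symmetries and the first-order vanishing already proved. Alternatively, one can bypass the explicit expansion entirely: the infimum defining $-K_0$ in the proof of Proposition~\ref{prop:var_fiber} is $\le\delta^2 u_x(0,0)=0$ trivially, while the global-minimum argument shows $\delta^2 u_x(\nu,\zeta)\ge 0$ for every $(\nu,\zeta)$, hence the infimum is exactly $0$; feeding this into~\eqref{eq:quadratic_ineq} finishes the proof.
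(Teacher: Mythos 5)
Your proof is correct and, in the ``bypass'' variant at the end, coincides exactly with the paper's one-line argument: since $u_x\ge 0$ and $u_x(\xi,\bar\xi,\eta,\bar\eta)=0$, the point $(\xi,\eta)$ is a minimum along the fibre, so $\delta^2 u_x(\nu,\zeta)\ge 0$ for all $(\nu,\zeta)$, forcing the infimum on the right of~\eqref{eq:quadratic_ineq} to equal zero (it is trivially $\le 0$ at $(\nu,\zeta)=(0,0)$), and Proposition~\ref{prop:var_fiber} then gives the claim. The explicit expansion of $\delta^2 u_x$ in your first paragraph is unnecessary for this conclusion (and slightly confuses first-order with second-order terms in what Proposition~\ref{prop:vanish} eliminates), but it does not affect the correctness of the final argument.
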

\begin{proof}
Since $u_x\ge 0$ and $u_x(\xi, \bar \xi, \eta, \bar \eta)=0$, the second variation $\delta^2 u_x$ in any direction is non-negative, hence the RHS in~\eqref{eq:quadratic_ineq} is zero.
\end{proof}

In the K\"ahler case this corollary was proved by Bando in~\cite[Prop.\,1]{ba-84} for $n=3$ and for an arbitrary~$n$ in~\cite[Prop.\,1.1]{mo-88} by Mok. Proposition~\ref{prop:var_fiber} in the K\"ahler situation was proved in~\cite{gu-09}. Our proof of this proposition follows the argument of Cao~\cite{ca-92}, see also~\cite[\S 5.2.3]{krf-13}.

\section{Griffiths positivity under the HCF}\label{sec:main}
\subsection{Preservation of Griffiths positivity}
With the results of Section~\ref{sec:positivity_properties} we are ready to prove our main result. 

\begin{theorem}\label{thm:main}
Let $g(t), t\in[0,\tau)$ be the solution to the HCF~\eqref{eq:HCF} on a compact complex Hermitian manifold $(M,g_0,J)$. Assume that the Chern curvature $\Omega^{g_0}$ at the initial moment $t=0$ is Griffiths non-negative (resp. positive), i.e., for $\xi,\eta\in T^{1,0}M$
\[
\Omega^{g_0}(\xi,\bar\xi,\eta,\bar\eta)\ge 0\quad (\mbox{resp.}>0).
\]
Then for $t\in[0,\tau)$ the Chern curvature $\Omega(t)=\Omega^{g(t)}$ remains Griffiths non-negative (resp. positive).
\end{theorem}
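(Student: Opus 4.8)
The plan is to run a tensor maximum principle (in the style of Hamilton, Bando and Mok) on the evolution equation for $\Omega$ in the torsion-twisted form provided by Proposition~\ref{prop:dt_Omega_torsion_twisted}. First I would reduce the positivity (resp. non-negativity) statement to a statement about an auxiliary quantity that is bounded below: for the non-negative case, fix a small parameter and consider $\Omega + \varepsilon e^{At}(\text{something positive definite})$ so that strict positivity of the initial data can be assumed after perturbation, and at the end let $\varepsilon\to 0$; alternatively work directly with $\Omega\ge 0$ and argue at the first would-be zero. For the strict-positivity case, the set-up is: suppose there is a first time $t_0>0$ and a point $x_0\in M$ and null vectors $\xi,\eta\in T^{1,0}_{x_0}M$ (say unit vectors, after rescaling) with $\Omega^{g(t_0)}(\xi,\bar\xi,\eta,\bar\eta)=0$, while $\Omega(t)\ge 0$ (or $>0$) for $t<t_0$. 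Extend $\xi,\eta$ to vector fields near $x_0$ that are parallel with respect to $\n^T$ at $x_0$ along radial geodesics and constant in $t$, and consider the scalar function $f(x,t)=\Omega^{g(t)}(\xi,\bar\xi,\eta,\bar\eta)$. By construction $f\ge 0$ up to $t_0$ with $f(x_0,t_0)=0$, so at $(x_0,t_0)$ we have $\partial_t f\le 0$, $\n f=0$, and $\Delta^T f\ge 0$ (the spatial Hessian of $f$ is non-negative there since $\n^T\xi=\n^T\eta=0$ at $x_0$ the second covariant derivative of $f$ agrees with $(\n^T\n^T\Omega)(\xi,\bar\xi,\eta,\bar\eta)$).

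Next I would evaluate the right-hand side of~\eqref{eq:dt_Omega_torsion_twisted} at $(x_0,t_0)$ against $(\xi,\bar\xi,\eta,\bar\eta)$ and show it is $\ge 0$, contradicting $\partial_t f\le 0$ unless everything vanishes. The three contributions: (i) $\Delta^T\Omega(\xi,\bar\xi,\eta,\bar\eta)\ge 0$ as just noted; (ii) $F_1(\Omega)(\xi,\bar\xi,\eta,\bar\eta)=0$ and $F_2$-type first-order terms vanish by Proposition~\ref{prop:vanish}, since $\xi,\eta$ are null directions of the non-negative tensor $\Omega_{x_0}$ at time $t_0$; (iii) $Q_2(\n T)(\xi,\bar\xi,\eta,\bar\eta)=\frac12|\n_\xi T_{\cdot\cdot\bar\eta}|^2\ge 0$ manifestly; (iv) the Hamilton quadratic term $F_2(\Omega)(\xi,\bar\xi,\eta,\bar\eta)$ — this is exactly $\sum_{i,j}\bigl(\Omega(\xi,\bar\xi,e_i,\bar e_j)\Omega(e_j,\bar e_i,\eta,\bar\eta) + \Omega(e_i,\bar\xi,\eta,\bar e_j)\Omega(\xi,\bar e_i,e_j,\bar\eta) - \Omega(e_i,\bar\xi,e_j,\bar\eta)\Omega(\xi,\bar e_i,\eta,\bar e_j)\bigr)$ in an orthonormal frame — and this is $\ge 0$ by Corollary~\ref{cor:var_fiber} (modulo checking that the index bookkeeping matches the statement there, using the symmetries of $\Omega$; in the K\"ahler case this is precisely Mok's computation). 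Adding up, the full RHS at $(x_0,t_0)$ is $\ge 0$, so $\partial_t f(x_0,t_0)\ge 0$. Combined with $\partial_t f\le 0$ this forces $\partial_t f=0$ and, since the argument applies with the $\varepsilon$-perturbation eliminating the possibility of a true first zero, we get a contradiction in the positive case; in the non-negative case the perturbation trick (replacing $\Omega$ by $\Omega+\varepsilon e^{At}h\otimes\bar h\otimes h'\otimes\bar h'$ for suitable $A$ dominating the lower-order constants from $F_1$, $F_2$) gives strict positivity of the perturbed quantity for all $t$, and $\varepsilon\to 0$ yields $\Omega(t)\ge 0$.

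The technical care needed for a clean maximum-principle argument lies in two places. First, making the ``evaluate against parallel null vectors'' step rigorous on a vector bundle rather than for scalar functions: one must verify that at a first zero of $\Omega$ on the bundle $T^{1,0}M\oplus T^{1,0}M$ (with base $M\times[0,\tau)$), one can arrange the test vector fields so that the zeroth-order reaction terms that are not lower-order-variations are handled by Corollary~\ref{cor:var_fiber} — this is the standard Hamilton tensor-maximum-principle packaging, but here complicated by the fact that $\Omega$ is not symmetric in its $(1,3)$ arguments, so the matrix $\bf B$ in Lemma~\ref{lm:matrix_ineq} is genuinely non-symmetric; one must double-check that the precise form of $F_2(\Omega)$ in~\eqref{eq:quadratic_curvature} is the one controlled by~\eqref{eq:quadratic_ineq}. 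Second, the compactness of $M$ is used to get uniform constants for the perturbation parameter $A$, and to ensure the infimum defining the first zero is attained. I expect the main obstacle to be exactly matching the quadratic term $F_2(\Omega)$ coming from the curvature evolution with the algebraic inequality of Corollary~\ref{cor:var_fiber} — i.e.\ checking that after using all symmetries of a curvature-type tensor, the trace expression $\tr({\bf A}_0\bar{\bf C}_0)-\tr({\bf B}_0\bar{\bf B}_0)$ with ${\bf A}_0=\Omega(\xi,\bar\xi,\cdot,\bar\cdot)$, ${\bf B}_0=\Omega(\xi,\bar\cdot,\eta,\bar\cdot)$, ${\bf C}_0=\Omega(\cdot,\bar\cdot,\eta,\bar\eta)$ is literally $-F_2(\Omega)(\xi,\bar\xi,\eta,\bar\eta)$ up to the already-vanishing first-order pieces; everything else is a routine adaptation of the Bando--Mok argument with $\Delta$ replaced by $\Delta^T$.
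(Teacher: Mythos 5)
Your plan coincides with the paper's proof: perturb $\Omega$ by $\epsilon\Omega'$ with $\Omega'_{i\bar j k\bar l}=g_{i\bar j}g_{k\bar l}$ and $\epsilon(t)=\epsilon_0 e^{Kt}$, run a tensor maximum principle on the evolution equation~\eqref{eq:dt_Omega_torsion_twisted} in the torsion-twisted form, use $\Delta^T u^\epsilon\ge 0$, Proposition~\ref{prop:vanish} for $F_1$, Corollary~\ref{cor:var_fiber} for the Hamilton quadratic term, $Q_2(\n T)\ge 0$, and let $\epsilon_0\to 0$. The one place you correctly flag as delicate is the $F_2\ge 0$ step, and there your transcription of~\eqref{eq:quadratic_curvature} carries a small index slip: in an orthonormal frame the third term is $\Omega(e_i,\bar\xi,e_j,\bar\eta)\Omega(\xi,\bar e_j,\eta,\bar e_i)$ (barred indices crossed), not $\Omega(\xi,\bar e_i,\eta,\bar e_j)$. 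More importantly, Corollary~\ref{cor:var_fiber} alone does not give all of $F_2\ge 0$: after renaming summation indices, $F_2(\Omega)(\xi,\bar\xi,\eta,\bar\eta)$ equals the Corollary's expression $\sum_{i,j}\bigl(\Omega(\xi,\bar\xi,e_i,\bar e_j)\Omega(e_j,\bar e_i,\eta,\bar\eta)-\Omega(\xi,\bar e_i,\eta,\bar e_j)\Omega(e_j,\bar\xi,e_i,\bar\eta)\bigr)$ \emph{plus} the middle term, which one must separately identify as $\sum_{i,j}\lvert\Omega(\xi,\bar e_i,e_j,\bar\eta)\rvert^2\ge 0$ using $\Omega(\xi,\bar\eta,\zeta,\bar\nu)=\overline{\Omega(\eta,\bar\xi,\nu,\bar\zeta)}$. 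With that decomposition made explicit, your argument closes and matches the paper.
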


To prove Theorem~\ref{thm:main} we adopt the ``barrier'' argument of~\cite[Prop.\,1]{ba-84} to our case. Essentially this result follows from Hamilton's maximum principle for tensors~\cite[Th.\,4.3]{ha-86}.

\begin{proof}[Proof of Theorem~\ref{thm:main}]
We prove only the ``non-negative'' statement. The proof of the ``positive'' statement is similar.
 
Clearly the set $\{t\,|\,\Omega(t)\ge 0\}$ is closed. We are going to prove that this set is open, so further we shrink the initial time interval $[0;\tau)$, if necessary, without explicitly specifying it. In particular, from the very beginning we assume that each of the quantities $|T|,|\n T|,|\Omega|$ is uniformly bounded on $[0;\tau)$. Let us introduce the time-dependent tensor field $\Omega'_{i\bar j k\bar l}:=g_{i\bar j}g_{k\bar l}$. 
Consider
\[
u^\epsilon=\Omega+\epsilon \Omega',
\]
where $\epsilon\colon [0;\tau)\to (0;+\infty)$ is a function to be defined later. Our first aim is to derive differential \emph{inequality} of parabolic type for $u^{\epsilon}$. Rewriting evolution equation~\eqref{eq:dt_Omega_torsion_twisted}, we have
\begin{multline}\label{eq:parabolic_eq}
\frac{d}{dt} u^\epsilon=
	\Delta^T u^\epsilon+F_1(u^\epsilon) + F_2(u^\epsilon) + \frac{d\epsilon}{dt}\Omega'+\epsilon\frac{d}{dt}\Omega'\\ - 
	\epsilon \Delta^T(\Omega')+
	(F_1(\Omega)-F_1(\Omega+\epsilon \Omega'))+
	(F_2(\Omega)-F_2(\Omega+\epsilon \Omega'))+
	Q_2(\n T).
\end{multline}
Let us bound summands in~\eqref{eq:parabolic_eq} separately.
\begin{itemize}
\item clearly $Q_2(\n T)\ge 0$, see~\eqref{eq:Q_2_nT};

\item since $F_1+F_2$ is a smooth map of curvature-type tensors and as $\Omega'>0$, there exist a constant $C_0>0$ such that on $M\times[0;\tau)$ one has
\[
(F_1+F_2)(\Omega)-(F_1+F_2)(\Omega+s\Omega')\ge 
-|s|C_0\Omega', \mbox{ for } s\in \R, |s|\le 1;
\]
\item there is a constant $C_1$ such that on $M\times[0;\tau)$ the torsion-twisted Laplacian and time derivative of $\Omega'$ satisfy
\[
|\Delta^T\Omega'|<C_1\Omega',\quad
\bigl|\frac{d}{dt}\Omega'\bigr|<C_1\Omega'.
\]
\end{itemize}
Assuming that $0<\epsilon<1$ on $[0;\tau)$, from equation~\eqref{eq:parabolic_eq} and the bounds on its summands, we have
\begin{equation}
\frac{d}{dt} u^\epsilon\ge
	\Delta^T u^\epsilon+F_1(u^\epsilon) +F_2( u^\epsilon) + 
	\Bigl(
		\frac{d\epsilon}{dt}-
		(C_0+2C_1)\epsilon
	\Bigr)\Omega'
\end{equation}
We set $\epsilon(t)=\epsilon_0 e^{Kt}$, where $\epsilon_0$ is a small positive number and $K=(C_0+2C_1+1)$. For this $\epsilon$, tensor $u^\epsilon$ satisfies differential inequality (on a smaller time interval such that $\epsilon_0e^{K\tau}< 1$)
\[
\frac{d}{dt} u^\epsilon\ge
	\Delta^T u^\epsilon+F_1(u^\epsilon)+F_2( u^\epsilon) +
	{\epsilon_0}\Omega'.
\]

We claim that $u^{\epsilon}>0$ on $[0;\tau)$, where from now on $\tau$ is fixed and independent of $\epsilon_0$. Clearly $u^{\epsilon}>0$ for $t=0$, since $\epsilon(0)>0$, $\Omega'>0$ and $\Omega|_{t=0}\ge0$. We prove by contradiction and let $t_0>0$ be the first time such that $u^{\epsilon}$ is not strictly positive. Then for some $x\in M$ and non-zero $\xi,\eta\in T_x^{1,0}M$ we have
\[
u_x^\epsilon(\xi,\bar\xi,\eta,\bar\eta)=0.
\]
For these $x$, $t_0$, $\xi$ and $\eta$ we have:
\begin{enumerate}
\item  $(\Delta^T u^\epsilon)_x(\xi,\bar\xi,\eta,\bar\eta)\ge 0$, since $u^\epsilon$ attains local minimum at $(x,t_0)\in M\times (0,\tau)$, $\xi,\eta\in T_x^{1,0}M$;
\item $F_1(u^\epsilon)_x(\xi,\bar\xi,\eta,\bar\eta)=0$ by Proposition~\ref{prop:vanish};
\item $F_2(u^\epsilon)_x(\xi,\bar\xi,\eta,\bar\eta)\ge 0$ by Corollary~\ref{cor:var_fiber};
\item $\Omega'_x(\xi,\bar{\xi},\eta,\bar{\eta})=g(\xi,\bar{\xi})g(\eta,\bar{\eta})>0$.
\end{enumerate}
Hence at $t_0$ one has
\begin{equation}
\begin{split}
0&\ge \frac{d}{dt}
	\bigl(
		u_x^\epsilon(\xi,\bar\xi,\eta,\bar\eta)
	\bigr)=
\bigl(
	\frac{d}{dt} u^\epsilon_x
\bigr)
(\xi,\bar\xi,\eta,\bar\eta)\\
&\ge
(\Delta^T u^\epsilon)_x(\xi,\bar\xi,\eta,\bar\eta)+
F_1(u^\epsilon)_x(\xi,\bar\xi,\eta,\bar\eta)+
F_2( u^\epsilon)_x(\xi,\bar\xi,\eta,\bar\eta)+
\epsilon_0\Omega'_x(\xi,\bar\xi,\eta,\bar\eta)>
0.
\end{split}
\end{equation}
This is a contradiction. Therefore we get $u^\epsilon=\Omega+\epsilon \Omega'>0$ for all sufficiently small $\epsilon_0>0$ and any $t\in[0;\tau)$. Letting $\epsilon_0\to 0$ we conclude $\Omega\ge 0$ for $t\in[0;\tau)$.

So the closed set $\{t|\Omega(t)\ge 0\}$ is also open, therefore $\Omega(t)\ge 0$ for any $t\in[0;\tau)$, provided $\Omega(0)\ge 0$ and the HCF~\eqref{eq:HCF} exist on $[0;\tau)$. This proves the theorem.
\end{proof}

\subsection{Strong maximum principle}

Theorem~\ref{thm:main} states a weak maximum principle for Griffiths positivity along Hermitian curvature flow~\eqref{eq:HCF}. In this section we prove an extension of Brendle and Schoen's strong maximum principle. It was first proved in \cite{br-sc-08} for the isotropic curvature under the Ricci flow. Gu~\cite{gu-09} used this maximum principle for K\"ahler manifolds with non-negative holomorphic bisectional curvature to give a simple proof of the uniformization theorem of Mok~\cite{mo-88}. In~\cite[App.A]{wi-13} Wilking generalized this strong maximum principle for any Lie algebraic Ricci flow invariant condition. We adopt the argument of Wilking to prove the strong maximum principle for Griffiths positivity along the HCF.

Let us use connection $\n^1\oplus\n^2$ (see Definition~\ref{def:torsion_twisted_connection}) to define the parallel transport on the vector bundle $T^{1,0}M\oplus T^{1,0}M$. Namely, the parallel lift of a curve $\gamma\colon (-\epsilon,\epsilon)\to M$ is the solution to the following ODE for $\xi$ and $\eta$.
\begin{equation}\label{eq:parallel_transport}
\begin{cases}
\n_{\gamma'}\xi=T(\gamma',\xi), \\
\n_{\gamma'}\eta=-g(\eta,T(\gamma',\cdot))^\#.
\end{cases}
\end{equation}

\begin{theorem}\label{thm:brendle_schoen}
Let $g(t), t\in[0,\tau)$ be a solution to the HCF~\eqref{eq:HCF} on a compact complex Hermitian manifold $(M,g_0,J)$. Assume that the Chern curvature $\Omega^{g_0}$ at the initial moment $t=0$ is Griffiths non-negative. Then for any $t_0\in(0,\tau)$ the set
\[
Z=\{(\xi,\eta) |\ \xi,\eta\in T^{1,0}M,\ \Omega^{g(t_0)}(\xi,\bar\xi,\eta,\bar\eta)=0\}\subset T^{1,0}M\oplus T^{1,0}M
\]
is invariant under the torsion-twisted parallel transport~\eqref{eq:parallel_transport}.
\end{theorem}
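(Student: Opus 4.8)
The plan is to run Brendle--Schoen's strong maximum principle in the form used by Wilking, applied to the scalar function obtained by contracting the evolution of $\Omega$ against a fixed parallel section. Fix $t_0\in(0,\tau)$ and suppose $(\xi_0,\eta_0)\in Z$ sits over a point $x_0$. Let $\gamma\colon(-\delta,\delta)\to M$ be a smooth curve through $x_0$, and let $(\xi(s),\eta(s))$ be the torsion-twisted parallel lift of $\gamma$ solving~\eqref{eq:parallel_transport} with $(\xi(0),\eta(0))=(\xi_0,\eta_0)$. First I would extend this to a family of sections over a neighborhood of $x_0$ in $M\times\{t_0\}$: parallel transport $(\xi_0,\eta_0)$ radially along $\gamma$-like geodesics using $\n^1\oplus\n^2$ so that the resulting fields $\xi,\eta$ satisfy, at $x_0$, $(\n^1_X\xi, \n^2_X\eta)=(T(X,\xi), -g(\eta,T(X,\cdot))^\#)$ — equivalently $\n^T$-parallel in the natural sense — and similarly we may freely extend in time to be constant in $t$ near $t_0$. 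The point of using exactly connection $\n^1\oplus\n^2$ is that then $f:=\Omega(\xi,\bar\xi,\eta,\bar\eta)$ has the property that $\Delta^T\Omega$ contracted with these sections becomes, modulo terms that vanish at the zero of $f$, the ordinary Laplacian $\Delta f$ of the scalar $f$; this is where Lemma~\ref{lm:torsion_twisted_laplacian} and the definition $\n^T=\n^1\otimes\bar{\n^1}\otimes\n^2\otimes\bar{\n^2}$ are essential.

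Next I would derive the scalar differential inequality satisfied by $f$ at points where $f=0$. By Theorem~\ref{thm:main}, $\Omega(t)\ge 0$ for all $t$, so $f\ge 0$ on $M$ at time $t_0$ and $f$ attains its minimum value $0$ at $x_0$. Contracting the evolution equation~\eqref{eq:dt_Omega_torsion_twisted} with the fixed sections $(\xi,\bar\xi,\eta,\bar\eta)$ and evaluating at $x_0$: the term $\Delta^T\Omega$ contributes $\Delta f$ plus terms involving first $\n^T$-derivatives of the sections (which vanish by construction) plus a first-order variation $F_1(\Omega)$, which vanishes at the zero of $f$ by Proposition~\ref{prop:vanish}; the term $F_1(\Omega)$ already present vanishes for the same reason; the term $Q_2(\n T)=\tfrac12|\n_i T_{\cdot\cdot\bar j}|^2\ge 0$; and the term $F_2(\Omega)$ evaluated at $(\xi,\bar\xi,\eta,\bar\eta)$ is $\ge 0$ by Corollary~\ref{cor:var_fiber}. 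Since $\partial_t f=0$ at $x_0$ (the section is $t$-constant and $f\ge 0$ is minimized in $t$ as well, using that $\Omega\ge0$ persists), we obtain at $x_0$
\[
0=\frac{d}{dt}f\ge \Delta f + (\text{nonnegative terms}),
\]
hence $\Delta f(x_0)\le 0$, while $x_0$ is a minimum of the nonnegative function $f$ so $\Delta f(x_0)\ge 0$; therefore $\Delta f(x_0)=0$ and all the nonnegative terms vanish there. The key structural input is that $f\ge 0$ on all of $M$, not just along one curve, so this holds for every choice of extension.

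The final and most delicate step is to upgrade this pointwise information into invariance of $Z$ along $\gamma$. Following Wilking's dimension-reduction trick (App.~A of~\cite{wi-13}): restrict attention to the curve $\gamma$, let $h(s):=f(\gamma(s))=\Omega(\xi(s),\bar\xi(s),\eta(s),\bar\eta(s))$, and show $h$ satisfies a scalar ODE inequality of the form $h''(s)\le C(|h(s)|+|h'(s)|)$ along $\gamma$ — the crucial cancellations being that at any $s$ with $h(s)=0$ (a minimum of $h\ge0$) all the curvature and torsion quadratic terms drop out by exactly the vanishing results above, leaving only controlled lower-order terms coming from the difference between the ambient Laplacian and the second $s$-derivative along $\gamma$, plus connection terms. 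Since $h(0)=0$ and $h\ge 0$ forces $h'(0)=0$, the ODE inequality and a Gronwall/uniqueness argument for $h\equiv 0$ give $h\equiv 0$ near $s=0$, i.e. $(\xi(s),\eta(s))\in Z$ for all small $s$; a connectedness argument along $\gamma$ then finishes it. The main obstacle I anticipate is precisely this last reduction: one must verify that the torsion-twisted parallel transport~\eqref{eq:parallel_transport} is the \emph{right} transport, i.e. that with this choice the second variation of $h$ along $\gamma$ genuinely reproduces the ambient $\Delta^T$ up to terms killed at zeros of $f$, and that the torsion cross-terms generated by differentiating $\xi(s),\eta(s)$ twice are absorbed either into $Q_2(\n T)\ge0$, into $F_2(\Omega)\ge0$ via Corollary~\ref{cor:var_fiber}, or into the vanishing first-order part via Proposition~\ref{prop:vanish}. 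Getting that bookkeeping exactly right — mirroring how Lemma~\ref{lm:torsion_twisted_laplacian} was engineered to make the bracket terms in~\eqref{eq:dt_Omega_mod_1st_order} coincide with $\Delta^T-\Delta$ — is the technical heart of the proof.
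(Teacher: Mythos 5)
Your overall plan — cast $\Omega$ as a scalar function on the bundle of pairs $(\xi,\eta)$, use the torsion-twisted connection $\n^1\oplus\n^2$ so that horizontal lifts match the parallel transport~\eqref{eq:parallel_transport}, contract the evolution equation and exploit the vanishing/positivity results of Section~\ref{sec:positivity_properties} — is the same as the paper's. But the final propagation step, which you yourself flag as the technical heart, is the one place where the argument as written does not close, and it is precisely where the paper's approach differs from yours.

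The gap: you invoke Corollary~\ref{cor:var_fiber} to get $F_2(\Omega)\ge 0$, and this corollary only holds \emph{at exact zeros} of $\Omega(\xi,\bar\xi,\eta,\bar\eta)$. From that you only learn that at $x_0$ all the nonnegative terms vanish and $\Delta f(x_0)=0$; this is a pointwise fact, not an ODE inequality. You then assert $h''(s)\le C(|h(s)|+|h'(s)|)$ along $\gamma$ and close with Gronwall, but the justification offered ("the quadratic terms drop out at any $s$ with $h(s)=0$") only addresses $s$ in the zero set, where the inequality is trivially compatible, not the generic $s$, where you would need it. Worse, even granting an ambient bound on $\sum_i\widetilde v_i(\widetilde v_i\cdot\Omega)$, you cannot extract an inequality for the single second derivative $h''$ along one direction: the bound is on a sum of second derivatives over a full horizontal frame, and there is no maximum-principle mechanism to isolate one summand. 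The pointwise example $h(s)=s^4$ shows that $h\ge 0$, $h(0)=0$, $h''(0)=0$ does not force $h\equiv0$ without a genuine differential inequality.

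What the paper does instead: it uses Proposition~\ref{prop:var_fiber} (not Corollary~\ref{cor:var_fiber}), which gives the quantitative lower bound $F_2(\Omega)(p)\ge K_1\inf_{|\zeta|\le 1}\delta^2\Omega(p)(\zeta,\zeta)$ that holds on an entire relatively compact neighborhood in the total space, not just at zeros. This produces the Bony-type degenerate elliptic inequality
\[
\sum_{i}\widetilde v_i\!\cdot\!(\widetilde v_i\!\cdot\!\Omega)(p)\le -K\inf_{|\zeta|\le 1}\delta^2\Omega(p)(\zeta,\zeta)+K|{\rm grad}\,\Omega|(p),
\]
which is exactly the hypothesis of Brendle--Schoen's Proposition 4 in~\cite{br-sc-08}. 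That proposition — a nontrivial result about propagation of zero sets for sub-elliptic operators along the span of the $X_j$, not a one-dimensional Gronwall argument — then directly gives invariance of $Z$ under the horizontal flow, i.e. under the torsion-twisted parallel transport. So the fix is: replace Corollary~\ref{cor:var_fiber} with Proposition~\ref{prop:var_fiber}, drop the reduction to an ODE along a single curve, and apply Brendle--Schoen's Proposition~4 to the function $\Omega$ on the bundle~$V$.
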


\begin{proof}
Consider $P=M\times (0,\tau)$ and define a fiber bundle $\pi\colon V\to P$, where
\[
\pi^{-1}(x,t)=\{(\xi,\eta) |\ \xi,\eta\in T_x^{1,0}M,\ \xi,\eta\neq 0\}.
\] 
We treat $\Omega$ as a function on $V$, namely, the value of $\Omega$ at $(x,t,\xi,\eta)\in V$ for $x\in M$, $t\in (0;\tau)$, and $(\xi, \eta)\in T_x^{1,0}M\oplus T_x^{1,0}M$ is defined to be $\Omega^{g(t)}_x(\xi,\bar{\xi},\eta,\bar{\eta})$.

By Theorem~\ref{thm:main} for any $t_0\in (0,\tau)$ curvature $\Omega^{g(t_0)}$ is Griffiths non-negative. Therefore $\Omega$ defines a non-negative function on $V$. We identify the zero locus $Z$ with a subset of $V$. At any $p \in V$ covariant derivative $\n^1\oplus \n^2$ defines a splitting of a tangent space to $V$ into vertical and horizontal parts
\[
T_p V=T_p^{v}V\oplus T_p^{h}V.
\]
Solutions to the parallel transport equation~\eqref{eq:parallel_transport} are precisely horizontal lifts of curves in $M\!\times\!\{t_0\}\subset P$ to $V$.

For a point $p=(x,t_0,\xi,\eta)\in Z\subset V$ over $(x,t_0)\in P$ let us pick a relatively compact neighbourhood in the total space $\mc U_p\subset V$. Fix a collection of real vector fields $v_1,\dots,v_{2n}\in \Gamma(TM)$ which are $g(t_0)$-orthonormal at $x$. Let $\widetilde v_1,\dots,\widetilde v_{2n} \in TV$ be the horizontal lifts of $v_1,\dots,v_{2n}$ to $\mc U_p$. Also let $\widetilde w_1,\dots,\widetilde w_{2n}$ be the horizontal lifts of $\n_{v_1}v_1,\dots, \n_{v_{2n}}v_{2n}$. 

Then $(\Delta^T \Omega)(p)=
\sum^{2n}_{i=1}
\bigl(
	\widetilde v_i\!\cdot\!(\widetilde v_i\!\cdot\! \Omega(p))
	-\widetilde{w_i}\!\cdot\!\Omega(p)
\bigr)$ and we can rewrite evolution equation~\eqref{eq:dt_Omega_torsion_twisted} as follows
\begin{equation}\label{eq:br_sch_ineq_1}
\frac{d}{dt} \Omega(p)=\sum^{2n}_{i=1}
\bigl(
	\widetilde v_i\!\cdot\!(\widetilde v_i\!\cdot\! \Omega(p))
	-\widetilde{w_i}\!\cdot\!\Omega(p)
\bigr) +
Q_2(\n T)(p)+
F_1(\Omega)(p)+
F_2(\Omega)(p).
\end{equation}
Clearly, $Q_2(\n T)\ge 0$ everywhere on $V$. Since neighborhood $\mc U_p$ is relatively compact, Proposition~\ref{prop:var_fiber} imply that
\[
F_2(\Omega)(p)\ge K_1\cdot\!\!\!\! \inf_{\substack{\zeta\in T^v_{p} V\\ |\zeta|\le 1}} {\delta^2} \Omega(p)(\zeta,\zeta).
\]
for some uniform in $p$ positive constant $K_1$. Also for some  positive constant $K_2$ one has
\[
\sum_{i=1}^{2n}\widetilde{w_i}\!\cdot\!\Omega(p)+\frac{d}{dt}\Omega(p)-F_1(\Omega)(p)\le K_2|{\rm grad}\,\Omega|(p),
\]
where $|{\rm grad}\,\Omega|$ is the length of the gradient of $\Omega\colon V\to\R$ with respect to any fixed axillary metric on~$V$. Then for a large enough constant $K$ we have
\begin{equation}
\sum_{i=1}^{2n}\widetilde v_i\!\cdot\!(\widetilde v_i\!\cdot\! \Omega(p)) \le 
-K\cdot\!\!\!\! \inf_{\substack{\zeta\in T^v_p V\\ |\zeta|\le 1}} {\delta^2} \Omega(p)(\zeta,\zeta)+
K|{\rm grad}\,\Omega|(p).
\end{equation}
By extending the infimum domain from $T^v_pV$ to $T_p V$ we obtain
\begin{equation}\label{eq:br_sch_ineq_crucial}
\sum_{i=1}^{2n}\widetilde v_i\!\cdot\!(\widetilde v_i\!\cdot\! \Omega(p)) \le 
-K\cdot\!\!\!\! \inf_{\substack{\zeta\in T_p V\\ |\zeta|\le 1}} {\delta^2} \Omega(p)(\zeta,\zeta)+
K|{\rm grad}\,\Omega|(p).
\end{equation}

Now we are in position to use the following result.
\begin{proposition}\cite[Prop.\,4]{br-sc-08}
Let $\mc V$ be an open subset of $\R^n$, and let $X_1,\dots, X_m$ be smooth vector fields on $\mc V$. Assume that $u\colon \mc V\to \R$ is a non-negative smooth function satisfying
\[
\sum_{j=1}^m X_j\!\cdot\! (X_j\!\cdot\! u)\le
-K \inf_{|\xi|\le 1} (\delta^2 u)(\xi, \xi)+K|Du|+Ku,
\]
where $K$ is a positive constant. Let $F =\{x\in\mc V:u(x)=0\}$. Finally, let $\gamma: [0, 1]\to\mc V$ be a smooth path such that $\gamma(0)\in F$ and $\gamma'(s)=\sum_{j=1}^m f_j(s)X_j(\gamma(s))$, where $f_1,\dots,f_m\colon [0,1]\to \R$ are smooth functions. Then $\gamma(s)\in F$ for all $s\in [0, 1]$.
\end{proposition}

We apply this proposition to the function $\Omega$ restricted to $\mc U_p\subset V$. With the use of
inequality~\eqref{eq:br_sch_ineq_crucial} we conclude that the zero set $Z\cap\mc U_p$ is invariant under the flow generated by the vector fields $\widetilde v_1,\dots, \widetilde v_{2n}$. It remains to notice that the integral curves of this flow coincide with the solutions to the torsion-twisted parallel transport~\eqref{eq:parallel_transport} along the integral curves of $v_1,\dots,v_{2n}$.
\end{proof}
Theorem~\ref{thm:brendle_schoen} implies a more standard version of the strong maximum principle.
\begin{corollary}\label{cor:smp}
Let $g(t), t\in[0,\tau)$ be a solution to the HCF~\eqref{eq:HCF} on a compact complex Hermitian manifold $(M,g_0,J)$. Assume that the Chern curvature $\Omega^{g_0}$ at the initial moment $t=0$ is Griffiths non-negative everywhere on $M$ and Griffiths positive at some point $x\in X$. Then $\Omega^{g(t)}$ is Griffiths positive everywhere on $M$ for any $t\in(0,\tau)$.
\end{corollary}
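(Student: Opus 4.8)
The plan is to derive Corollary~\ref{cor:smp} from the strong maximum principle of Theorem~\ref{thm:brendle_schoen} together with the preservation statement of Theorem~\ref{thm:main}; I assume, as everywhere in the paper, that $M$ is connected. The first step will be to establish a \emph{dichotomy}: for each fixed $t_0\in(0,\tau)$, the curvature $\Omega^{g(t_0)}$ is Griffiths positive either at every point of $M$ or at no point of $M$. To see this, suppose $\Omega^{g(t_0)}(\xi,\bar\xi,\eta,\bar\eta)=0$ for some $y\in M$ and nonzero $\xi,\eta\in T_y^{1,0}M$, i.e.\ $(\xi,\eta)\in Z$ over $y$. Given any $z\in M$, I would join $y$ to $z$ by a smooth path $\gamma$ and solve the torsion-twisted parallel transport equations~\eqref{eq:parallel_transport} along $\gamma$; since these are linear first-order ODEs in $\xi$ and in $\eta$ separately, their solutions stay nowhere vanishing, producing \emph{nonzero} vectors $\xi',\eta'\in T_z^{1,0}M$. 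By Theorem~\ref{thm:brendle_schoen} the pair $(\xi',\eta')$ lies in $Z$, so $\Omega^{g(t_0)}(\xi',\bar{\xi'},\eta',\bar{\eta'})=0$ and $\Omega^{g(t_0)}$ fails to be Griffiths positive at $z$. Hence non-positivity at one point propagates to all of $M$.

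Next I would switch on positivity for small times by continuity. By hypothesis the number $m(0):=\min\{\Omega^{g_0}(\xi,\bar\xi,\eta,\bar\eta):\xi,\eta\in T_x^{1,0}M,\ g_0(\xi,\bar\xi)=g_0(\eta,\bar\eta)=1\}$ is strictly positive; since $t\mapsto g(t)$ is smooth on $M\times[0,\tau)$, the analogous quantity $m(t)$ varies continuously, so there is $\varepsilon>0$ with $\Omega^{g(t)}$ Griffiths positive at $x$ for all $t\in[0,\varepsilon)$. Combined with the dichotomy, this shows $\Omega^{g(t)}$ is Griffiths positive everywhere on $M$ for every $t\in(0,\varepsilon)$.

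Finally I would propagate positivity forward in time. For an arbitrary $t_0\in(0,\tau)$, choose $t_1\in(0,\min\{t_0,\varepsilon\})$; then $\Omega^{g(t_1)}$ is Griffiths positive everywhere, and since $t\mapsto g(t)$ for $t\in[t_1,\tau)$ is the HCF with initial metric $g(t_1)$, the preservation part of Theorem~\ref{thm:main} shows that $\Omega^{g(t)}$ remains Griffiths positive everywhere for all $t\in[t_1,\tau)$, in particular at $t_0$. Since $t_0$ was arbitrary, this proves the corollary. I expect the argument to be short, as all the analytic difficulty is contained in Theorem~\ref{thm:brendle_schoen}; the only point deserving a word of care is that the transport~\eqref{eq:parallel_transport} cannot make $\xi$ or $\eta$ vanish, which is immediate from the linearity of the transport equations.
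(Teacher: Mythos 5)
Your proof is correct and follows essentially the same route as the paper: positivity at $x$ for small $t_0>0$ by continuity, spatial propagation via the torsion-twisted parallel transport invariance of Theorem~\ref{thm:brendle_schoen}, and temporal propagation via Theorem~\ref{thm:main}. The one detail you spell out more carefully than the paper does is that parallel transport~\eqref{eq:parallel_transport}, being a linear ODE, cannot annihilate $\xi$ or $\eta$, which is indeed needed so that the transported pair is a legitimate Griffiths test pair; this is a worthwhile clarification but not a different argument.
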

\begin{proof}
Since $\Omega^{g_0}_x$ is Griffiths positive, for any small enough $t_0\in(0,\tau)$ curvature $\Omega^{g(t_0)}_x$ is positive. So the zero set of $\Omega^{g(t_0)}$ at $x$
\[
Z_x=\{(\xi,\eta) |\ \xi,\eta\in T_x^{1,0}M,\ \Omega^{g(t_0)}(\xi,\bar\xi,\eta,\bar\eta)=0\}\subset Z,
\]
is empty. On the other hand by Theorem~\ref{thm:brendle_schoen} any two such zero sets $Z_{x_1}$, $Z_{x_2}$ could be identified via the torsion-twisted parallel transport along a curve joining $x_1,x_2\in M$ (of course this identification depends on a curve). Hence all $Z_y$, $y\in M$ are empty and $\Omega^{g(t_0)}$ is Griffiths positive. Then Theorem~\ref{thm:main} implies that $\Omega^{g(t)}$ remains positive for any $t\in (t_0,\tau)$. Sending $t_0$ to 0 we get the stated positivity for any $t\in(0,\tau)$.
\end{proof}

\section{Applications}\label{sec:applications}

In this section we demonstrate how regularization properties of the HCF could be used to study compact complex Hermitian manifolds with non-negative Griffiths curvature. Conjecturally, all such manifolds which are Fano are isomorphic to the generalized flag manifolds $G/P$, i.e., quotients of semisimple Lie groups by parabolic subgroups. 

To be more precise, Campana-Peternell conjecture~\cite{pe-ca-91} states that any Fano manifold with \emph{nef} (numerically effective) tangent bundle (see~\cite{dps-94} for the definitions and examples) is isomorphic to a generalized flag manifold. It is well-known that existence of a Hermitian metric with Griffiths non-negative curvature implies nefness. Hence Campana-Peternell conjecture would imply characterization of Fano manifolds admitting Griffiths non-negative Hermitian metric. We expect that the HCF might be a useful tool in studying this weaker version of Campana-Peternell conjecture.

First we use Corollary~\ref{cor:smp} together with the celebrated Mori's solution to the Frankel conjecture~\cite{mo-79} to prove the following uniformization result.
\begin{proposition}\label{prop:pn}
Let $(M,g_0,J)$ be a compact complex $n$-dimensional Hermitian manifold such that
\begin{enumerate}
\item its Chern curvature $\Omega$ is Griffiths non-negative;
\item $\Omega$ is strictly positive at some point $x_0\in M$.
\end{enumerate}
Then $M$ is biholomorphic to the projective space $\P^n$.
\end{proposition}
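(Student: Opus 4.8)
The plan is to run the Hermitian curvature flow \eqref{eq:HCF} starting from $g_0$ on its maximal time interval $[0,\tau)$, and to combine the strong maximum principle of Corollary~\ref{cor:smp} with Mori's theorem characterizing $\P^n$ among Fano manifolds. First I would invoke Corollary~\ref{cor:smp}: hypotheses (1) and (2) are exactly its assumptions, so for every $t\in(0,\tau)$ the evolved curvature $\Omega^{g(t)}$ is Griffiths \emph{strictly} positive everywhere on $M$. Fixing any such $t$ gives a Hermitian metric $g=g(t)$ on $M$ with $\Omega^g>0$ in the Griffiths sense, i.e.\ (by the Remark following Definition~\ref{def:positive}) the holomorphic tangent bundle $(T^{1,0}M,g)$ is Griffiths positive.

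Next I would extract the algebro-geometric consequences of Griffiths positivity of $T^{1,0}M$. Griffiths positivity implies ampleness of $T^{1,0}M$ (as noted in the same Remark, citing~\cite{de}); in particular $-K_M=\det T^{1,0}M$ is ample, so $M$ is a Fano manifold, hence projective. Moreover, since $\Omega^g>0$ forces in particular $S^{g}>0$ (the second Chern--Ricci curvature is positive) and more importantly the first Chern--Ricci form is positive, $M$ is simply connected — alternatively, ample tangent bundle already gives $\pi_1(M)$ finite and a Fano manifold is simply connected. The essential point, however, is just that $M$ is a Fano manifold whose tangent bundle is ample.

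Finally I would apply Mori's solution of the Hartshorne conjecture / Frankel conjecture~\cite{mo-79}: a compact complex manifold of dimension $n$ whose holomorphic tangent bundle is ample is biholomorphic to $\P^n$. Since we have produced, for any $t\in(0,\tau)$, a Hermitian metric $g(t)$ on the fixed complex manifold $M$ with $T^{1,0}M$ ample, Mori's theorem yields $M\cong\P^n$, which is the assertion. Note that the conclusion is about the fixed underlying complex manifold, so it does not matter that the metric has been deformed along the flow.

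The main obstacle, and the only genuinely nontrivial input beyond the quoted results, is the existence of the flow on an interval $(0,\tau)$ with $\tau>0$ and the applicability of Corollary~\ref{cor:smp} — both of which are already granted: short-time existence is \cite[Prop.\,5.1]{st-ti-11} as recorded after \eqref{eq:HCF}, and Corollary~\ref{cor:smp} is proved above. Everything after that is a citation of~\cite{de} for ``Griffiths positive $\Rightarrow$ ample'' and of Mori's theorem~\cite{mo-79} for ``ample tangent bundle $\Rightarrow \P^n$''; the flow is used purely as a \emph{regularization device} to upgrade the merely-nonnegative-with-a-positive-point hypothesis into genuine positivity, which is precisely where the HCF (rather than soft algebraic geometry) is needed.
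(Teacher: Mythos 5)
Your proposal is correct and follows essentially the same route as the paper: invoke Corollary~\ref{cor:smp} to upgrade Griffiths non-negativity plus positivity at one point to strict Griffiths positivity for $t>0$, deduce ampleness of $T^{1,0}M$, and conclude with Mori's theorem. The only cosmetic difference is the citation for Griffiths positivity $\Rightarrow$ ampleness (the paper routes through Griffiths~\cite{gr-65} and Hartshorne~\cite{ha-66} on the line bundle $\mathcal{O}_{\mathbb{P}(T^{1,0}M)}(1)$, while you cite \cite{de} and the Remark after Definition~\ref{def:positive}); the paper also separately notes positivity of the first Chern--Ricci form to establish projectivity, but that is subsumed by ampleness of the tangent bundle as in your argument.
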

\begin{proof}
Let $g(t)$ be a solution to the HCF on $(M,g_0,J)$. By Corollary~\ref{cor:smp}, for any $t>0$ metric $g(t)$ has Griffiths-positive Chern curvature. In particular, the first Chern-Ricci form
\[
\omega_{i\bar j}:=\frac{i}{2\pi}g^{k\bar l}\Omega_{i\bar j k\bar l},
\]
representing the first Chern class of the anticanonical bundle, is strictly positive. Therefore $-K_M=\Lambda^n T^{1,0}M$ is ample and $M$ is projective.

On the other hand, it is well-known that strict Griffiths-positivity of $\Omega$, implies ampleness of $T^{1,0}M$. Indeed, if $\Omega$ is positive, then the induced metric on the line bundle $\mc L=\mc O_{\P(T^{1,0}M)}(1)$ over the projectivized manifold of hyperplanes of $T^{1,0}M$ is positive, see~\cite[Prop.\,9.1]{gr-65}, hence, by the result of Hartshorne~\cite[Prop.\,3.2]{ha-66}, $T^{1,0}M$ is ample.

Finally, using the result of Mori~\cite{mo-79}, we conclude that $M$ is isomorphic to the projective space~$\P^n$.
\end{proof}

Proposition~\ref{prop:pn} demonstrates that under the HCF either $\Omega^{g(t)}$ immediately becomes positive and in this case $M\simeq \P^n$, or $\Omega^{g_0}$ has zeros everywhere on $M$. In the next theorem we study the structure of the zero set of $\Omega^{g(t)}$ for $t>0$. It turns out, that even though we do not put any restrictions on $\Omega^{g_0}$ besides Griffiths non-negativity, zero locus of $\Omega^{g(t)}$ for any positive $t$ possesses many nice additional properties, which are not a priori satisfied.
\begin{theorem}\label{thm:regularization}
Let $g(t), t\in[0,\tau)$ be a solution to the HCF~\eqref{eq:HCF} on a compact complex Hermitian manifold $(M,g_0,J)$. Assume that the Chern curvature $\Omega^{g_0}$ is Griffiths non-negative. Fix $t_0\in(0,\tau)$ and assume that $\Omega_x^{g(t_0)}(\xi,\bar\xi,\eta,\bar\eta)=0$ for some $\xi,\eta\in T_x^{1,0} M$. Then for any $\zeta,\nu\in T_x^{1,0}M$ one has
\begin{enumerate}
\item $\Omega_x(\xi,\bar\zeta,\nu,\bar\eta)=0$;
\item $\sum_{i,j}\Omega_x(\xi,\bar\xi,e_i,\bar e_j)\Omega_x(e_j,\bar e_i,\eta,\bar\eta)=
\sum_{i,j}\Omega_x(e_i,\bar\xi,e_j,\bar\eta)\Omega_x(\xi,\bar e_j,\eta,\bar e_i)$;
\item $g(\n_\xi T_x(\zeta,\nu),\bar \eta)=0$,
\end{enumerate}
where $\Omega=\Omega^{g(t_0)}$, $T=T^{g(t_0)}$ and $e_1,\dots,e_n\in T_x^{1,0}M$ is an orthonormal frame.
\end{theorem}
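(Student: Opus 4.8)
The plan is to evaluate the evolution equation at a zero point of $\Omega$ and exploit that such a point is a \emph{global minimum} of $\Omega$ viewed as a function on the bundle $V$ from the proof of Theorem~\ref{thm:brendle_schoen}: every nonnegative summand on the right-hand side of~\eqref{eq:br_sch_ineq_1} must then vanish separately, and the three vanishings are precisely assertions (1)--(3). We may assume $\xi,\eta\neq 0$, the degenerate case being immediate from multilinearity, and we set $p=(x,t_0,\xi,\eta)\in V$.

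First I would record the consequences of $p$ being a global minimum. By Theorem~\ref{thm:main} the function $\Omega$ is nonnegative on $V$ and $\Omega(p)=0$ by hypothesis, so its gradient vanishes at $p$; since $t_0$ is an interior time this gives $\frac{d}{dt}\Omega(p)=0$. With the horizontal lifts $\widetilde v_i,\widetilde w_i$ of a $g(t_0)$-orthonormal frame as in the proof of Theorem~\ref{thm:brendle_schoen} we have $\widetilde w_i\!\cdot\!\Omega(p)=0$, while $\widetilde v_i\!\cdot\!(\widetilde v_i\!\cdot\!\Omega)(p)\ge 0$ because it is the second derivative of $\Omega$ along a curve through the minimum $p$, so $(\Delta^T\Omega)(p)=\sum_i\widetilde v_i\!\cdot\!(\widetilde v_i\!\cdot\!\Omega)(p)\ge 0$; and $F_1(\Omega)(p)=0$ by Proposition~\ref{prop:vanish}. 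Substituting these into~\eqref{eq:br_sch_ineq_1} leaves
\[
0=\sum_{i=1}^{2n}\widetilde v_i\!\cdot\!(\widetilde v_i\!\cdot\!\Omega)(p)+Q_2(\n T)(p)+F_2(\Omega)(p),
\]
with the first sum already known to be $\ge 0$ and $Q_2(\n T)(p)\ge 0$ automatically.

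The main point — and the step I expect to cost the most care — is an algebraic identity for the quadratic curvature term~\eqref{eq:quadratic_curvature}: for a curvature-type tensor $u$ and a unitary frame $e_1,\dots,e_n$ at $x$,
\[
F_2(u)_x(\xi,\bar\xi,\eta,\bar\eta)=\sum_{i,j}\bigl(u_x(\xi,\bar\xi,e_i,\bar e_j)u_x(e_j,\bar e_i,\eta,\bar\eta)-u_x(\xi,\bar e_i,\eta,\bar e_j)u_x(e_j,\bar\xi,e_i,\bar\eta)\bigr)+\sum_{i,j}\bigl|u_x(\xi,\bar e_i,e_j,\bar\eta)\bigr|^2.
\]
I would derive this by expanding~\eqref{eq:quadratic_curvature} in the unitary frame — carefully, since $\Omega$ is \emph{not} symmetric in its first and third slots, so one must keep track of which arguments carry $\xi,\eta$ and which carry frame vectors — and then using the Hermitian symmetry $u(\alpha,\bar\beta,\gamma,\bar\delta)=\bar{u(\beta,\bar\alpha,\delta,\bar\gamma)}$ to recognise the surviving cross term as the sum of squares $\sum|u_x(\xi,\bar e_i,e_j,\bar\eta)|^2$. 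Applying this with $u=\Omega^{g(t_0)}$ at $p$: the first sum is $\ge 0$ by Corollary~\ref{cor:var_fiber} (valid since $\Omega^{g(t_0)}\ge 0$ and $\Omega^{g(t_0)}_x(\xi,\bar\xi,\eta,\bar\eta)=0$), the second is manifestly $\ge 0$, hence $F_2(\Omega)(p)\ge 0$. The displayed sum identity then forces each of $\sum_i\widetilde v_i\!\cdot\!(\widetilde v_i\!\cdot\!\Omega)(p)$, $Q_2(\n T)(p)$, and $F_2(\Omega)(p)$ to be zero.

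It remains to read off the conclusions. From $F_2(\Omega)(p)=0$ and the decomposition above, both sums vanish: vanishing of $\sum_{i,j}|\Omega_x(\xi,\bar e_i,e_j,\bar\eta)|^2$ gives $\Omega_x(\xi,\bar e_i,e_j,\bar\eta)=0$ for all $i,j$, hence $\Omega_x(\xi,\bar\zeta,\nu,\bar\eta)=0$ for all $\zeta,\nu$ by multilinearity, which is (1); and vanishing of the first sum is exactly (2) after relabelling $i\leftrightarrow j$ in its second half. From $Q_2(\n T)(p)=0$ and formula~\eqref{eq:Q_2_nT} we get $\tfrac12\sum_{i,j}|g(\n_\xi T_x(e_i,e_j),\bar\eta)|^2=0$, so $g(\n_\xi T_x(e_i,e_j),\bar\eta)=0$ for all $i,j$, and by linearity $g(\n_\xi T_x(\zeta,\nu),\bar\eta)=0$ for all $\zeta,\nu$, which is (3). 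Apart from the bookkeeping in the $F_2$-identity, the only delicate point is justifying $\sum_i\widetilde v_i\!\cdot\!(\widetilde v_i\!\cdot\!\Omega)(p)\ge 0$, for which I simply borrow the horizontal-lift formalism already developed for Theorem~\ref{thm:brendle_schoen}; note that the full strength of that theorem (invariance of the zero set) is not needed here.
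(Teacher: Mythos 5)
Your proof is correct and follows essentially the same route as the paper: evaluate the evolution equation~\eqref{eq:dt_Omega_torsion_twisted} at the zero $(x,t_0,\xi,\eta)$, note that $\frac{d}{dt}\Omega=0$ there because $t_0$ is interior and $\Omega\ge 0$, observe that $\Delta^T\Omega\ge 0$ and $F_1(\Omega)=0$ at a minimum, decompose $F_2(\Omega)$ into the two nonnegative sums (the paper states this decomposition without proof; you supply the short verification), and conclude that $Q_2(\n T)$ and each nonnegative piece of $F_2(\Omega)$ must vanish, giving (1)--(3). The only cosmetic difference is that you phrase the nonnegativity of $\Delta^T\Omega$ through the horizontal-lift formalism of Theorem~\ref{thm:brendle_schoen}, whereas the paper states it directly; the underlying reasoning is identical.
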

\begin{proof}
By Theorem~\ref{thm:main} Chern curvature  $\Omega^{g(t)}$ is Griffiths non-negative for any $t\in [0,\tau)$. This implies that
\[
\frac{d(\Omega^{g(t)}(\xi,\bar{\xi},\eta,\bar{\eta}))}{dt}\Bigr|_{t=t_0}=0.
\]
On the other hand by Proposition~\ref{prop:dt_Omega_torsion_twisted}
\begin{equation}
\begin{split}
\frac{d(\Omega^{g(t)}(\xi,\bar{\xi},\eta,\bar{\eta}))}{dt}\Bigr|_{t=t_0}=
\Delta^T\Omega(\xi,\bar{\xi},\eta,\bar{\eta})+
Q_2(\n T)(\xi,\bar{\xi},\eta, \bar{\eta})+
F_2(\Omega)(\xi,\bar{\xi},\eta,\bar{\eta})+
F_1(\Omega)(\xi,\bar{\xi},\eta,\bar{\eta}).
\end{split}
\end{equation}
Similarly to the proof of Theorem~\ref{thm:main} at time $t_0$ we have
\begin{itemize}
\item $\Delta^T\Omega_x(\xi,\bar{\xi},\eta,\bar{\eta})\ge 0$, since $\Omega$ attains its local minimum at $x\in M$, $\xi,\eta\in T_x^{1,0}M$;
\item $Q_2(\n T)(\xi,\bar{\xi},\eta,\bar{\eta})=
\frac{1}{2}\sum\limits_{i,j}|g(\n_{\xi}T(e_i,e_j),\bar{\eta})|^2\ge 0$, see~\eqref{eq:Q_2_nT};
\item $F_2(\Omega)(\xi,\bar{\xi},\eta,\bar{\eta})$ is the sum of two non-negative summands
$\sum\limits_{i,j}
\bigl(
	\Omega_x(\xi,\bar\xi,e_i,\bar e_j)\Omega_x(e_j,\bar e_i,\eta,\bar\eta)-
	\Omega_x(e_i,\bar\xi,e_j,\bar\eta)\Omega_x(\xi,\bar e_j,\eta,\bar e_i)
\bigr)$ and
$\sum\limits_{i,j}|\Omega(\xi,\bar{e_i},e_j,\bar{\eta})|^2$;
\item $F_1(\Omega)(\xi,\bar{\xi},\eta,\bar{\eta})=0$ as a first variation of curvature at its minimum.
\end{itemize}
The sum of several non-negative summands is zero, so each summand vanishes. This implies the stated identities.
\end{proof}

\bibliographystyle{siam}
\bibliography{biblio}

\end{document}